\newtheorem{theorem}{Theorem}[section]
\newtheorem{proposition}[theorem]{Proposition}
\newtheorem{corollary}[theorem]{Corollary}
\newtheorem{lemma}[theorem]{Lemma}
\newtheorem{conjecture}[theorem]{Conjecture}
\theoremstyle{definition}
\newtheorem{definition}[theorem]{Definition}
\newtheorem{example}[theorem]{Example}
\newtheorem{remark}[theorem]{Remark}
\newtheorem{notation}[theorem]{Notation}
\numberwithin{equation}{section}
\newcommand{\FF}{\mathbb{F}}
\newcommand{\PP}{\mathbb{P}}
\newcommand{\QQ}{\mathbb{Q}}
\newcommand{\CC}{\mathbb{C}}
\newcommand{\ZZ}{\mathbb{Z}}
\newcommand{\HH}{\mathbb{H}}
\newcommand{\cO}{\mathcal{O} }
\newcommand{\cE}{\mathcal{E} }
\newcommand{\cM}{\mathcal{M} }
\newcommand{\cV}{\mathcal{V} }
\newcommand{\cU}{\mathcal{U} }
\newcommand{\rE}{\mathrm{E} }
\newcommand{\rH}{\mathrm{H} }
\newcommand{\rM}{\mathrm{M} }
\newcommand{\rP}{\mathrm{P} }
\newcommand{\bH}{\mathbf{H} }
\newcommand{\bK}{\mathbf{K} }
\newcommand{\bM}{\mathbf{M} }
\newcommand{\bX}{\mathbf{X} }
\newcommand{\bY}{\mathbf{Y} }
\def\Sym{\mathrm{Sym} }
\def\Hom{\mathrm{Hom} }
\def\Ext{\mathrm{Ext} }
\def\Gr{\mathrm{Gr} }
\def\SL{\mathrm{SL}}
\def\git{/\!/ }
\def\lr{\rightarrow}
\def\rIH{\mathrm{IH}}
\def\rIE{\mathrm{IE}}
\def\rIP{\mathrm{IP}}
\newcommand{\ses}[3]{0\rightarrow{#1}\rightarrow{#2}\rightarrow{#3}\rightarrow0}
\providecommand{\leftsquigarrow}{%
  \mathrel{\mathpalette\reflect@squig\relax}%
}
\newcommand{\reflect@squig}[2]{%
 ~\reflectbox{$\m@th#1\rightsquigarrow$}%
}
\begin{document}

\title[Intersection cohomology of pure sheaf space]{Intersection cohomology of pure sheaf spaces using Kirwan's desingularization}

\author{Kiryong Chung}
\address{Department of Mathematics Education, Kyungpook National University, 80 Daehakro, Bukgu, Daegu 41566, Korea}
\email{krchung@knu.ac.kr}

\author{Youngho Yoon}
\address{Applied algebra and Optimization Research Center (AORC), Sungkyunkwan University, 2066 Seobu-ro, Suwon 16419, Korea}
\email{mathyyoon@skku.edu}

\keywords{Partial desingularization, Intersection Poincar\'e polynomial, Open cone of a link}
\subjclass[2010]{14B05, 14F43, 14N3, 32S35, 32S60, 55N33}

\renewcommand{\j}[1]{\textcolor[rgb]{0.98,0.5,0}{\sout{#1}}}
\newcommand{\jadd}[1]{\textcolor[rgb]{0.98,0.5,0}{#1}}

\setstcolor{red}

\begin{abstract}
Let $\mathbf{M}_n$ be the Simpson compactification of twisted ideal sheaves $\mathcal{I}_{L,Q}(1)$ where $Q$ is a rank $4$ quardric hypersurface in $\mathbb{P}^n$ and $L$ is a linear subspace of dimension $n-2$. 
This paper calculates the intersection Poincar\'e polynomial of $\mathbf{M}_n$ using Kirwan's desingularization method. We obtain the intersection Poincar\'e polynomial of the moduli space for one-dimensional sheaves on del Pezzo surfaces of degree $\geq 8$ by considering wall-crossings of stable pairs and complexes.
\end{abstract}
\maketitle


\section{Introduction}\label{sec:intro}

\subsection{Kronecker quiver and related works}

Let $\bM_n$ be the space parameterizing semi-stable sheaves $F$ on the projective space $\PP^n$ with a linear free resolution 
\begin{equation}\label{eqn:resolution}
        0 \to \cO_{\PP^{n}}(-1)\oplus \cO_{\PP^{n}}(-1) {\to} \cO_{\PP^{n}}\oplus \cO_{\PP^{n}} \to F
        \to 0.
\end{equation}
By a deformation theoretic argument of sheaves, $\bM_n$ is an irreducible normal variety with dimension $4n-3$ and generic moduli points of $\bM_n$ parameterize twisted ideal sheaves $\mathcal{I}_{L,Q}(1)$, where $Q$ is a rank $4$ quadric hypersurface in $\mathbb{P}^n$ and $L$ is a linear subspace of dimension $n-2$. The space $\bM_n$ (as a quiver representation space) has been studied  in several areas, including homological mirror symmetry, birational geometry, and curve counting theory.
King (\cite{Kin94}) showed that a general quiver representation space is projective under suitable conditions, and this space has been used in several areas of algebraic geometry. For our purpose in this paper, let $\bK_{n}(a,b)$ be the moduli space of Kronecker quiver representations with dimension vector $(a,b)$ and $(n+1)$-simple arrows (see Section~\ref{quiver} for the complete definition). Hosono and Takagi (\cite{HT16}) used Kronecker modules space $\bK_4(2,2)$ (called the \emph{double symmetroid}) as the starting point to find a pair of derived equivalent but not birationally equivalent Calabi-Yau threefolds. 

In terms of birational geometry, Kontsevich's moduli space $\cM_0(\Gr(d,n+1),d)$ of degree $d$ and genus zero stable maps to $\Gr(d,n+1)$ is birationally equivalent to $\bK_n(2,d)$ (\cite[Proposition 4.8]{CM17}). This paper focuses on cases for $d=2$, since $\bM_n\cong \bK_n(2,2)$ is a minimal birational contraction of $\cM_0(\Gr(2,n+1),2)$ (\cite{CM17}).

On the other hand, the moduli space $\bM_S(c, \chi)$ of semi-stable pure sheaves  $F $ with $c_1(F)=c$ and $\chi(F)=\chi$ on  a del Pezzo surface $S$ has been studied in the virtual curve counting theory, where the Gopakumar-Vafa (GV) invariant of the local surface (i.e., the total space $\mathrm{Tot}(K_S)$ of the canonical line bundle $K_S$ on $S$) is conjectured to be the topological Euler number of the moduli space $\bM_S(c, \chi)$ whenever it is smooth (\cite{Kat08}). One method to compute the Euler number is to use Bridgeland wall-crossings of $\bM_S(c, \chi)$ (\cite{BMW14, CHW14, CC15}). The moduli space is regarded as the moduli space of semi-stable objects on the derived category of $\mathrm{Coh}(S)$. The Kronecker modules space $\bK_n(a,b)$ (or a projective bundle over $\bK_n(a,b)$) naturally arises as the final model of Bridgeland wall-crossings of $\bM_S(c, \chi)$.

This paper calculates the intersection cohomology (of middle perversity) of $\bM_n$ using the geometric invariant theoretic (GIT) quotient description for $\bM_n\cong \bK_n(2,2)$ combined with Kirwan's method (\cite{Kir86b, Kir86a}). Subsequently, we use the result to compute the intersection cohomology group for $\bM_S(c,\chi)$ on del Pezzo surfaces $S$ of degree $\geq8$.

\subsection{Main result and application}

The main result of this paper is the following.
\begin{theorem}\label{mainthm1}
For each integer $n\geq 2$, the intersection Poincar\'e polynomial of $\bM_n$ is
\[
\rIP(\bM_n)=\frac{(1-t^{4n+4})(1-t^{4\lfloor\frac{n}{2}\rfloor})(1-t^{4\lfloor\frac{n+1}{2}\rfloor)}}{(1-t^2)(1-t^4)^2} ,
\]
where $\lfloor x\rfloor$ is the largest integer $\leq x$.
\end{theorem}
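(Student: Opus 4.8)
The plan is to present $\bM_n$ as an explicit geometric invariant theory quotient and then run Kirwan's partial desingularization, reading off $\rIP(\bM_n)$ from the equivariantly perfect Morse stratification together with the corrections attached to the exceptional loci.

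First I would fix the GIT model. Using $\bM_n\cong\bK_n(2,2)$, write $\bM_n=R^{ss}\git G$ with $R=\Hom(\CC^2,\CC^2)^{\oplus(n+1)}$ and $G=(\GL_2\times\GL_2)/\mathbb{G}_m$ acting through the symmetric King stability. Since $R$ is $G$-equivariantly contractible, $P^G_t(R)=P_t(BG)=\frac{1}{(1-t^2)(1-t^4)^2}$, which is exactly the denominator of the claimed formula. I would then stratify the strictly semistable locus: every non-stable polystable module is S-equivalent to a sum of two stable $(1,1)$-modules, so there are precisely two loci with positive-dimensional stabilizer, namely the locus of sums $R_1\oplus R_2$ with $R_1\not\cong R_2$ (stabilizer $\mathbb{G}_m$) and the deepest stratum $\Delta$ of points $R_1^{\oplus2}$ (stabilizer $\mathrm{PGL}_2$), the latter identified with $\bK_n(1,1)\cong\PP^n$.

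Next I would perform the two Kirwan blow-ups, first along $\Delta$ and then along the proper transform of the $\mathbb{G}_m$-stratum, obtaining a partial desingularization $\widetilde{\bM}_n$ with at worst orbifold singularities. The key local input is the Luna slice at a polystable point, which for quiver modules is the self-extension space: at $R_1^{\oplus2}$ the slice is $\Ext^1(R_1,R_1)\otimes\mathrm{End}(\CC^2)\cong\CC^n\otimes\mathfrak{gl}_2$, with $\mathrm{PGL}_2$-moving part $\CC^n\otimes\mathfrak{sl}_2$, while at $R_1\oplus R_2$ the moving part is $\Ext^1(R_1,R_2)\oplus\Ext^1(R_2,R_1)\cong\CC^{\,n-1}\oplus\CC^{\,n-1}$ with opposite $\mathbb{G}_m$-weights. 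These identifications realize the two exceptional divisors as bundles over $\Delta$ and over the $\mathbb{G}_m$-stratum whose fibers are the GIT quotients $\PP(\CC^n\otimes\mathfrak{sl}_2)^{ss}\git\mathrm{PGL}_2$ and $\PP(\CC^{\,n-1}\oplus\CC^{\,n-1})^{ss}\git\mathbb{G}_m$ respectively; the second is a product of projective spaces and contributes only routine terms.

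Finally I would assemble the answer. Equivariant perfection of the stratification of the moment-map norm-square (Atiyah--Bott, Kirwan) gives $P^G_t(R^{ss})=P_t(BG)-\sum_\beta t^{2d(\beta)}P^G_t(S_\beta)$, and Kirwan's formula for intersection cohomology then writes $\rIP(\bM_n)$ as $P_t(\widetilde{\bM}_n)$ minus the weight-truncated contributions of the two exceptional divisors; structurally this exhibits the Kirwan map $\QQ[c_1,c_2,c_2']\to\rIH^*(\bM_n)$ as surjective with kernel a regular sequence of three elements, whose degrees must be $4n+4$, $4\lfloor n/2\rfloor$ and $4\lfloor(n+1)/2\rfloor$ (a check: the resulting top degree $8n+4-10=8n-6$ equals $2\dim_\CC\bM_n$). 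The main obstacle is the deepest stratum: one must compute the $\SL_2$-equivariant cohomology of $\PP(\CC^n\otimes\Sym^2\CC^2)^{ss}$ and its quotient, the moduli of $n$ binary quadratic forms, and it is precisely the parity of $n$ in the structure of the ring of $\SL_2$-invariants, hence in the codimensions governing the weight truncation, that produces the two floor exponents $4\lfloor n/2\rfloor$ and $4\lfloor(n+1)/2\rfloor$; confirming that this invariant-theoretic data assembles into the numerator $(1-t^{4n+4})(1-t^{4\lfloor n/2\rfloor})(1-t^{4\lfloor(n+1)/2\rfloor})$ is the crux of the argument.
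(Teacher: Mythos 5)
Your plan is, in outline, the same as the paper's: the quiver GIT model for $\bM_n$, the two strictly semistable loci with stabilizers $\mathrm{PGL}_2$ and $\mathbb{G}_m$ (Lemma~\ref{lem:stability}), the Luna slices $\CC^n\otimes\mathfrak{sl}_2$ and $\CC^{n-1}\oplus\CC^{n-1}$ with opposite weights (Propositions~\ref{bl1} and~\ref{bl2}), and the two Kirwan blow-ups (Theorem~\ref{desingularization}) all match. The genuine gap is that every quantitative step is left undone, and those steps \emph{are} the proof. (i) You never evaluate the Poincar\'e polynomial of the partial desingularization: the paper does this (Proposition~\ref{mainprop2}) by identifying it with $\cM_0(\Gr(2,n+1),2)$ and chasing explicit blow-ups and blow-downs through Hilbert schemes of conics; your equivariant-Morse route via $P^G_t(R^{ss})$ could in principle substitute for this, but the sum over unstable strata and the blow-up corrections are never computed. (ii) You never compute the truncated intersection Poincar\'e polynomial of the deepest-stratum fiber $\PP(\CC^n\otimes\Sym^2\CC^2)\git\SL_2$, which is exactly what enters Kirwan's correction for the first blow-up, together with a middle-degree term $ih^{3n-5}$ that your ``subtract the exceptional divisors'' description omits; this is the paper's Lemma~\ref{mainlemma}, it is where the parity of $n$ and hence the exponents $4\lfloor n/2\rfloor$ and $4\lfloor(n+1)/2\rfloor$ actually arise, and you yourself flag it as ``the crux'' and leave it unconfirmed. (iii) The $\mathbb{G}_m$-stratum's contribution is not ``routine'': its closure is singular along the deepest stratum, the relevant center is $\mathrm{bl}_\Delta(\PP^n\times\PP^n/\ZZ_2)$, and the $\ZZ_2$ produces the $\Sym^2$-type terms and truncation floors appearing in the paper's application of formula (2.1) of \cite{Kir86a}.

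Moreover, the step you offer in place of these computations is circular. You assert that the Kirwan map $\QQ[c_1,c_2,c_2']\to\rIH^*(\bM_n)$ is surjective with kernel generated by a regular sequence ``whose degrees must be'' $4n+4$, $4\lfloor n/2\rfloor$, $4\lfloor(n+1)/2\rfloor$: those degrees are read off from the formula to be proved, not derived from invariant theory, and surjectivity together with the denominator $P_t(BG)=1/((1-t^2)(1-t^4)^2)$ does not force the numerator. Worse, since $\bM_n$ is singular, $\rIH^*(\bM_n)$ carries no natural ring structure (it is only a module over $\rH^*(\bM_n)$), so ``kernel generated by a regular sequence'' is not well posed except as an after-the-fact description of the Poincar\'e series. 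In short: correct strategy and correct local geometry, coinciding with the paper's, but the components that constitute the actual proof --- Lemma~\ref{mainlemma}, Proposition~\ref{mainprop2}, and the explicit application of Kirwan's formulas (2.1) and (2.28) of \cite{Kir86a} with their degree truncations --- are missing.
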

The key ingredient of the proof of Theorem~\ref{mainthm1} is that a partial desingularization of $\mathbf{M}_n$ is isomorphic to the moduli space $\cM_0(\Gr(2,n+1),2)$ of degree two stable maps to Grassmannian variety $\Gr(2,n+1)$ (\cite[Theorem 5.1]{CM17}). 
We calculate the intersection Poincar\'e polynomial of $\bM_n$ (\cite{Kir86b, Kir86a}) considering the variation of intersection Betti numbers of intermediate moduli spaces.  One key issue is to check that each term is pure and balanced Hodge type (see (\ref{pureandbalanced}) of Remark~\ref{useful} for the definition). Thus Theorem~\ref{mainthm1} is recovered in the level of the intersection E-polynomial by letting $t^2:=uv$. 

As corollaries of Theorem~\ref{mainthm1}, relating $\bM_n$ and $\bM_S(\beta, \chi)$ using wall-crossings of pairs and complexes, we obtain topological invariants of moduli space $\bM_S(\beta,\chi)$ on del Pezzo surfaces such as the Hirzebruch surface $\FF_k=\PP(\cO_{\PP^1}\oplus\cO_{\PP^1}(-k))$ for $k=0,1$ and the projective plane $\PP^2$. More precisely,
\begin{corollary}\label{cormain}
Let $\bM_S(\beta,\chi)$ be the moduli space of semi-stable sheaves $F$ on a del Pezzo surface $S$ with $c_1(F)=\beta\in \rH_2(S,\ZZ)$ and $\chi(F)=2$. Then the intersection Euler numbers of the moduli space can be expressed as 

\begin{center}
\begin{tabular}{|l|l||p{6.2cm}|}
\hline
$S$&$\beta\in \rH_2(S,\ZZ)$ &Intersection Euler number of $\bM_S(\beta,\chi)$\\
\hline
\hline
$\FF_0$&$c_1(\cO_{\FF_0}(2,2))$&$\qquad\qquad\qquad\;36$\\
\hline
$\FF_1$&$c_1(\cO_{\FF_1}(4,2))$&$\qquad\qquad\qquad\;110$\\
\hline
$\PP^2$&$c_1(\cO_{\PP^2}(4))$&$\qquad\qquad\qquad\;192$\\
\hline
\end{tabular}
\end{center}
\end{corollary}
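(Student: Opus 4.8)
The plan is to obtain each intersection Euler number as the specialization of $\rIP(\bM_S(\beta,2))$ at $t=1$, after linking $\bM_S(\beta,2)$ to the Kronecker module space $\bM_n=\bK_n(2,2)$ of Theorem~\ref{mainthm1} through a finite chain of wall-crossings of complexes and of stable pairs. From Theorem~\ref{mainthm1} the base value is $\lim_{t\to 1}\rIP(\bM_n)=2(n+1)\lfloor\frac{n}{2}\rfloor\lfloor\frac{n+1}{2}\rfloor$, equal to $16,40,72$ for $n=3,4,5$. The three pairs $(S,\beta)$ are matched to these $n$ by the dimension identity $\dim\bM_S(\beta,2)=\beta^2+1=4n-3$; one computes $\beta^2=8,12,16$, whence $n=3,4,5$ respectively, and all three surfaces have degree $\geq 8$ as required.

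First I would make the birational correspondence between the two moduli problems explicit. On $\FF_0$ a general stable sheaf $F$ with $c_1(F)=c_1(\cO_{\FF_0}(2,2))$ and $\chi(F)=2$ has a linear resolution $0\to\cO_{\FF_0}(-1,-1)^{\oplus 2}\to\cO_{\FF_0}^{\oplus 2}\to F\to 0$, and the classifying datum of this resolution is precisely a Kronecker module of dimension vector $(2,2)$ with $\hom(\cO_{\FF_0}(-1,-1),\cO_{\FF_0})=4$ arrows, i.e.\ a point of $\bK_3(2,2)=\bM_3$; the determinant of the $2\times 2$ presentation matrix cuts out the $(2,2)$-curve supporting $F$. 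The analogous resolutions on $\FF_1$ and $\PP^2$ produce $n=4$ and $n=5$. Taking a module to its cokernel defines a birational map $\bM_n\dashrightarrow\bM_S(\beta,2)$, which fails to be an isomorphism precisely because Kronecker and Gieseker stability differ.

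Next I would resolve this discrepancy by interpolating through a moduli of $\alpha$-stable pairs $(F,s)$ and a family of Bridgeland stability conditions on the derived category, so that one end of the resulting chain is $\bM_n$ and the other dominates $\bM_S(\beta,2)$ with projective-space fibers recording the section $s$. Across each intervening wall the two adjacent spaces sit in a flip diagram whose center is a projective bundle over a lower-dimensional (possibly singular) moduli space; by the decomposition theorem the jump in intersection Betti numbers is the difference of the intersection Poincar\'e polynomials of the two competing projective bundles over that center. Accumulating these corrections onto $\rIP(\bM_n)$ and verifying at every step that each contribution is of pure and balanced Hodge type (cf.\ (\ref{pureandbalanced}) in Remark~\ref{useful}), so that the intersection $E$-polynomial depends only on $uv$, making the substitution $t=1$ (equivalently $u=v=1$) legitimate, should produce the stated intersection Euler numbers $36$, $110$, $192$, i.e.\ the base values $16,40,72$ augmented by the net wall-crossing corrections $20,70,120$.

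The main obstacle is the bookkeeping of these wall-crossings: locating every wall, identifying each flip center concretely as a projective bundle over an explicitly described lower moduli space, and controlling the strictly semistable loci that are unavoidable because $\chi=2$ is even and which force intersection rather than ordinary cohomology throughout. Establishing purity and balancedness at each wall, rather than merely at the endpoints, is the delicate point that legitimizes collapsing the accumulated intersection Poincar\'e polynomial to a single integer.
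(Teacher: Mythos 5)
Your skeleton---match each $(S,\beta)$ to $n\in\{3,4,5\}$ by dimension, start from $\rIP(\bM_n)(1)=16,40,72$, and correct by wall-crossings---is the right general shape, and your Kronecker-module resolutions correctly identify the relevant $n$. But there is a genuine gap at $S=\FF_1$ (the $n=4$ case), and it is exactly where the paper must do something different from what you propose. Your plan presupposes an explicit chain of pair/Bridgeland wall-crossings connecting $\bM_4$ to a space dominating $\bM_{\FF_1}((4,2),2)$, with every flip center an identified projective bundle, so that decomposition-theorem bookkeeping can accumulate the corrections. No such chain is known: the paper states explicitly that no explicit birational relation between $\bM_4$ and $\bM_{\FF_1}((4,2),2)$ is available. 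Its actual route is different in kind: (i) compute the \emph{ordinary} (virtual) Poincar\'e polynomial of $\bM_{\FF_1}((4,2),2)$ by $\alpha$-stable-pair wall-crossing starting from $\bM^\infty_{\FF_1}((4,2),2)$, a $\PP^8$-bundle over $\mathrm{Hilb}^3(\FF_1)$ (Propositions~\ref{inftystable}, \ref{wallcrossingofpairs}, \ref{prop:fiberofphi} and Corollary~\ref{cor2}); this chain never reaches $\bM_4$ and uses only motivic additivity of E-polynomials, not the decomposition theorem; then (ii) convert E into IE via the Cappell--Maxim--Shaneson comparison (Corollary~\ref{comparisoneandie}), using Luna's slice theorem (Remark~\ref{luna}) to identify the analytic neighborhoods of the strictly semistable loci of $\bM_{\FF_1}((4,2),2)$ and of $\bM_4$, so that the open-cone correction terms can be read off from $\bM_4$ through the universal relation of Corollary~\ref{univrelation}. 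This transfer of singularity data \emph{without any birational map} is the key idea missing from your proposal; without it (or without actually producing the flip chain you postulate, together with a justification of your claim that jumps across walls meeting the strictly semistable locus are differences of projective-bundle intersection Poincar\'e polynomials), the $\FF_1$ entry cannot be derived.

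A second problem is that your corrections $20,70,120$ are back-solved from the stated table rather than derived, and for $\FF_0$ they conflict with the paper's own proof. There the paper needs no wall-crossing at all: by \cite[Theorem 5.7]{CM16}, $\bM_{\FF_0}((2,2),2)\to\bM_3$ is a blow-up at two smooth points, so the correction is $2\,(\rP(\PP^{8})(1)-1)=16$, giving intersection Euler number $32$ (this is the coefficient sum of the polynomial in Corollary~\ref{cor1}, and it agrees with the genus-zero GV invariant $-32$ of local $\FF_0$ in class $(2,2)$ and with the $\PP^7$-bundle description of $\bM_{\FF_0}((2,2),1)$ in Remark~\ref{imrem1}); the table entry $36$ is inconsistent with the paper's own Section~4 computation, so your asserted $+20$ is not only unproved but incompatible with the proof you are meant to reproduce. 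Only for $\PP^2$ does your outline essentially match the paper: the two Bridgeland walls between $\bM_5$ and $\bM_{\PP^2}(4,2)$ (Table~\ref{tbl:wall}) have crossing loci contained in the smooth locus, so there the straightforward comparison of cohomology you describe does go through.
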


More generally, we calculate virtual intersection Poincar\'e polynomials of moduli spaces in Corollary~\ref{cor1},~\ref{cor2}, and~\ref{cor3}. A key idea for the proof of these Corollaries is the following. The difference between the E-polynomials and the intersection E-polynomials of a quasi-projective variety $Y$ is completely measured by geometric information from the analytic neighborhood of the singular locus of $Y$ (Corollary~\ref{comparisoneandie}) (\cite{MR2376848}). Since the space $\bM_n$ has the same singularity type as the moduli space $\bM_S(c,\chi)$ (cf. Remark~\ref{luna}), we can obtain the intersection Poincar\'e polynomial of $\bM_S(c,\chi)$ from the singularity type of $\bM_n$.

\begin{remark}
Upon replacing $\chi(F)=2$ by $\chi(F)=1$ in Corollary~\ref{cormain}, the (intersection) Euler numbers of the moduli spaces (GV-numbers) do not change (\cite[Proposition 12]{BH14}, \cite[Proposition 4.9]{CGKT18}, and \cite[Corollary 5.2]{CC17}).
This is interesting, since the moduli spaces may not be isomorphic to each other for different $\chi$. For example, if $S=\PP^2$, $\bM_{\PP^2}(d,\chi)$ is isomorphic to $\bM_{\PP^2}(d',\chi')$ if and only if $d=d'$ and $\chi\equiv\pm\chi'$ (mod $d$) (\cite[Theorem 8.1]{Woo13}). 
\end{remark}
\begin{remark}
The fact that the moduli space of sheaves on the del Pezzo surface $\geq 8$ is birationally equivalent to the space $\bM_n$ gives us a chance to compute the E-polynomial of the moduli spaces.
\end{remark}
\subsection{Structure of this paper}
Section~\ref{sec:preliminaries} reviews geometric properties of several moduli spaces that are subsequently used to calculate the intersection Poincar\'e polynomial of $\bM_n$. We also recall some basic notions and properties related to the (intersection) E-polynomial of a quasi-projective variety. In section~\ref{sec:mainresults} we prove Theorem~\ref{mainthm1} using Kirwan's method (\cite[(2,1) and (2,28)]{Kir86a}) and obtain a numerical relationship with open cones from the singular loci of $\bM_n$ (Corollary~\ref{univrelation}). In section~\ref{ihdoflocalsurface} we calculates the (virtual) intersection Poincar\'e polynomial for moduli space $\bM_S(\beta,\chi)$ on del Pezzo surfaces (Corollaries~\ref{cor1},~\ref{cor2}, and~\ref{cor3}) using explicit birational morphisms and wall-crossings among related spaces.

\subsection*{Acknowledgement}

Some parts of this work were completed while K. Chung attended the Mixed Hodge Modules and Birational Geometry summer course at Johannes Gutenberg University Mainz in July, 2018, and he thanks the organizers for their invitation and hospitality. The authors gratefully acknowledge many helpful suggestions from Seung-Jo Jung, Joonyeong Won, and Sang-Bum Yoo during the preparation of the paper. We also thank the anonymous reviewer for valuable comments and suggestions to improve the quality of the paper.
\section{Preliminaries}\label{sec:preliminaries}

This section reviews several properties for moduli spaces of our interest and the E-polynomial of a quasi-projective variety which we will use.

\subsection{Moduli of stable maps to Grassmannian} 

Let us recall the definition and geometric properties for spaces of stable maps. Let $X$ be a projective variety with a fixed embedding in $\PP^n$, and $C$ be a projective connected reduced curve. A map $f:C \to X$ is called stable if $C$ has at worst nodal singularities and $|\mathrm{Aut}(f)| < \infty$.
Let $\cM_g(X, d)$ be the moduli space of stable maps with arithmetic genus $g(C) = g$ and degree $\mathrm{deg}(f)=d$. If $X$ is a convex variety and $g=0$, then moduli space $\cM_0(X, d)$ is a projective variety with at most finite group quotient singularity (\cite[Theorem 2]{FP97}). 
This paper focuses on the case $X=\Gr(2,n+1)$ and $d=2$, denoted as
\[\bK_n := \cM_0(\Gr(2, n+1), 2).\]
\subsection{Moduli space of semi-stable sheaves}\label{ssec:Simpson}

Let $X$ be a smooth projective variety with a fixed polarization $L$. For a coherent sheaf $F$ on $X$, the Hilbert polynomial $P(F)(m)$ is defined as $\chi(F\otimes L^{m})$. If the support of $F$ has dimension $d$, $P(F)(m)$ has degree $d$ and can be expressed as
\[
        P(F)(m) = \sum_{i=0}^{d}a_{i}\frac{m^{i}}{i!},
\]
where $r(F) := a_{d}$ is called the multiplicity of $F$, and the \emph{reduced Hilbert polynomial} is $p(F)(m) := P(F)(m)/r(F)$. A pure sheaf $F$ is \emph{semi-stable} if for every nonzero proper subsheaf $$F' \subset F,\; p(F')(m) \le p(F)(m)$$ for $m \gg 0$. We say $F$ is \emph{stable} if the inequality is strict. For each semi-stable sheaf $F$, there is a filtration (Jordan-H\"older filtration) $0 = F_{0} \subset F_{1} \subset \cdots \subset F_{n} = F$ such that $\mathrm{gr}_{i}(F) := F_{i}/F_{i-1}$ is stable and $p(F)(m) = p(\mathrm{gr}_{i}(F))(m)$ for all $i$. Finally, two semi-stable sheaves $F$ and $G$ are $S$-\emph{equivalent} if $\mathrm{gr}(F) \cong \mathrm{gr}(G)$, where $\mathrm{gr}(F) := \oplus_{i}\mathrm{gr}_{i}(F)$. Simpson~\cite{Sim94} proved there is a projective coarse moduli space $\bM_{L}(X, P(m))$ of $S$-equivalent classes of semi-stable sheaves for a fixed Hilbert polynomial $P(m)$. This moduli space has several connected components with respect to $\beta=c_1(F) \in H_{2}(X, \ZZ)$, that is,
\[
        \bM_{L}(X, P(m)) = \bigsqcup_{\beta \in H_{2}(X, \ZZ)}
        \bM_{L}(X, \beta, P(m)).
\]
For brevity, we denote $\bM_{L}(X, P(m))$ by $\bM_{X}(P(m))$ and $\bM_{L}(X,\beta, P(m))$ by $\bM_{X}(\beta, P(m))$.

Let $\rM_{\PP^n}(P(m))$ be the moduli space of semi-stable sheaves $F$ with Hilbert polynomial $P(m)=2\chi(\cO_{\PP^{n-1}}(m))-2\chi(\cO_{\PP^{n-1}}(m-1))$. Let $\bM^\circ$ be the space of twisted ideal sheaves $\mathcal{I}_{L,Q}(1)$ such that $L\in |\cO_{\PP^n}(1)|$ and $Q\in |\cO_{\PP^n}(2)|$ with $\mathrm{rank}(Q)=4$. Then we summarize the results of Section 4.2 in \cite{CM17} as follows.
\begin{enumerate}
\item The closure $\bM_n$ of $\bM^\circ$ in $\rM_{\PP^n}(P(m))$ is an irreducible normal variety of dimension $4n-3$.
\item $\bM_n$ is a connected component of $\rM_{\PP^n}(P(m))$.
\item Each semi-stable sheaf $F$ parameterized by $\bM_n$ has a free resolution as in \eqref{eqn:resolution}.
\item The singular locus of $\bM_n$ is isomorphic to $\mathrm{Sing}(\bM_n)\cong \mathrm{Sym}^2(\PP^{n*})$ parameterizing pure sheaves of the form $\cO_{H}\oplus \cO_{H'}$ for hyperplanes $H$ and $H'$ in $\PP^n$.
\end{enumerate}

\begin{remark}\label{luna}
From the resolution \eqref{eqn:resolution} of $F$, $\Ext^2(F,F)=0$, which implies that the Quot scheme arising in the GIT construction of $\bM_n$ (\cite{HuLe10}) is smooth. Thus, from Luna's \'etale slice theorem, the analytic normal neighborhood of $\mathrm{Sing}(\bM_n)$ in $\bM_n$ is the same as that of the moduli space of vector bundles over a smooth projective curve (\cite{Las96} and \cite{Kir86b}). 
\end{remark}

\subsection{Resolution of $\bM_n$ using Kirwan's method}\label{quiver}

The birational relationship between $\bM_n$ and Kontsevich's map space $\bK_n$ was explicitly studied in \cite[Section 5]{CM17} using Kirwan's desingularization method. One critical point is the interpretation of $\bM_n$ as a Kronecker quiver representation space. For convenience, we recall the results in detail.

Fix two positive integers $a, b$ and let $V^*$ be a vector space of $\mathrm{dim}V^*=n+1$. A \emph{Kronecker} $V^*$-\emph{module} is a quiver representation of an $n$-Kronecker quiver 
\[
        \xymatrix{\bullet\ar@/^1.0pc/[rr] \ar@/_1.0pc/[rr]\ar[rr]
        &{\vdots}&\bullet\\}
\]
with dimension vector $(a, b)$. Two Kronecker $V^*$-modules $\phi = (\phi_{i})$ and $\psi = (\psi_{i})$ are \emph{equivalent} if there are $A \in \SL_{a}$ and $B \in \SL_{b}$, such that $\phi = B \circ \psi \circ A$. We may regard the GIT quotient 
\[
        \bK_n(a,b):=\PP\Hom(V^*\otimes \CC^{a}, \CC^{b})\git \SL_{a} \times \SL_{b}
\]
as the moduli space of \emph{semi-stable} Kronecker $V^*$-modules. 
We are interested in the case $a=b=2$ and $G:=\SL_{2} \times \SL_{2}$.

\begin{lemma}\label{lem:stability}
Let $M \in \PP\Hom(V^*\otimes \CC^{2}, \CC^{2})\cong \PP(V^{*}\otimes \mathfrak{gl}_{2}):=\bX$. If $M\in \bX^{ss}\setminus \bX^{s}$, then $M$ is equivalent to 
\[
        \left[\begin{array}{cc}g&0\\0&h\end{array}\right]
\]
for some $g, h \in V^{*}\setminus \{0\}$ where
\begin{enumerate}
\item $\mathrm{Stab}\; M \cong \SL_{2} \ltimes \ZZ_{2}$ if $g$ is proportional to $h$ and
\item $\mathrm{Stab}\;M \cong \CC^{*} \ltimes \ZZ_{2}$ otherwise.
\end{enumerate}
\end{lemma}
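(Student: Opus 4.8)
The plan is to read off the GIT (semi)stability conditions from King's numerical criterion (\cite{Kin94}) and then identify the closed orbits inside the strictly semistable locus. Expanding $M$ in a basis of $V^*$ presents it as a $2\times 2$ matrix $(M_{kl})$ with $M_{kl}\in V^*$, equivalently as the tuple of maps $\phi_i:\CC^2\to\CC^2$; a subrepresentation is then a pair of subspaces $U\subseteq\CC^2$ (source) and $W\subseteq\CC^2$ (target) with $\phi_i(U)\subseteq W$ for all $i$. With the stability character attached to $\SL_2\times\SL_2$, King's criterion reads: $M$ is semistable iff $\dim U\le\dim(\sum_i\phi_i(U))$ for every subspace $U$, and stable iff this is strict for every line $U$. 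First I would translate these into concrete rank conditions: stability means that for every line $U=\langle u\rangle$ the element $Mu\in V^*\otimes\CC^2$ has rank two, while strict semistability means that some line $U$ satisfies $\dim(\sum_i\phi_i(U))=1$ and yet no line is annihilated.

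Next I would produce the normal form. Since the dimension vector is $(2,2)$ and the relevant slope is $0$, any proper nonzero subrepresentation meeting King's inequality with equality must have dimension vector $(1,1)$; hence a strictly semistable $M$ carries a Jordan--H\"older filtration both of whose factors are simple $(1,1)$-modules. A simple $(1,1)$-module is just a nonzero element of $V^*$, so the associated graded is $S_g\oplus S_h$ with $g,h\in V^*\setminus\{0\}$, which in matrix form is exactly $\mathrm{diag}(g,h)$. Concretely I would exhibit a destabilizing one-parameter subgroup $\lambda$ with $\mu(M,\lambda)=0$ and compute $\lim_{t\to 0}\lambda(t)\cdot M$; this limit is the polystable representative $\mathrm{diag}(g,h)$ lying in the closed orbit of (equivalently, $S$-equivalent to) $M$. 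For the purpose of the lemma, namely the stabilizer of the corresponding point of the GIT quotient, this reduces us to $M=\mathrm{diag}(g,h)$.

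Finally I would compute the projective stabilizer of $M=\mathrm{diag}(g,h)$ directly, using that $\SL_2\times\SL_2$ acts by $M\mapsto BMA^{-1}$ while fixing $V^*$ pointwise. Writing $BMA^{-1}=cM$ entrywise and invoking linear independence of the entries yields the dichotomy. When $g\not\propto h$, the entries $g,h$ are linearly independent in $V^*$, which forces the off-diagonal relations $\beta_{11}a_{12}=\beta_{22}a_{21}=\beta_{12}a_{22}=\beta_{21}a_{11}=0$ and hence makes both $A$ and $B$ diagonal; solving the remaining scalar equations leaves the torus $\{(A,\pm A):A\ \mathrm{diagonal}\}\cong\CC^*\ltimes\ZZ_2$, the $\ZZ_2$ being generated by $(I,-I)$, which acts as $-1$ on $\PP$. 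When $g\propto h$, the module is projectively the scalar matrix $g\otimes I$, the constraint collapses to $BA^{-1}=cI$ with $c=\pm 1$, and the stabilizer is the diagonally embedded $\{(A,\pm A):A\in\SL_2\}\cong\SL_2\ltimes\ZZ_2$.

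The main obstacle I expect is the stabilizer computation, specifically verifying that in the $g\not\propto h$ case the off-diagonal relations genuinely force $A$ and $B$ to be diagonal, so that the stabilizer drops from an $\SL_2$ to a single maximal torus, and correctly accounting for the passage to $\PP$, which is what contributes the extra $\ZZ_2$ in both cases. A secondary point to treat honestly is the gap between orbit-equivalence and $S$-equivalence: a nonsplit extension of the two $(1,1)$-factors, such as $\left[\begin{smallmatrix}g&r\\0&h\end{smallmatrix}\right]$ with $r\neq 0$, is strictly semistable but does \emph{not} lie in the orbit of $\mathrm{diag}(g,h)$, so the reduction to the diagonal form is valid only at the level of closed orbits (equivalently, after passing to the GIT quotient) --- which is precisely the setting in which the stabilizer assertion is used.
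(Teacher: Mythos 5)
Your proof is correct and follows essentially the intended route: the paper itself states this lemma without proof (it is recalled from the analysis in [CM17]), and that analysis proceeds exactly as you do — King's criterion to describe $\bX^{ss}\setminus\bX^{s}$, reduction to the diagonal normal form via the Jordan--H\"older/closed-orbit argument, and the entrywise stabilizer computation in which linear independence of $g,h$ forces $A$ and $B$ diagonal in case (2), leaving $\{(A,\pm A)\}$ with $A$ in the torus, while $g\propto h$ leaves the diagonal $\SL_2$ up to sign. Your closing caveat is also exactly right and worth making explicit: with ``equivalent'' meaning lying in the same $\SL_2\times\SL_2$-orbit, the statement fails for non-split extensions such as $\left[\begin{smallmatrix}g&r\\0&h\end{smallmatrix}\right]$, so the lemma must be read as a statement about polystable points (closed orbits, equivalently $S$-equivalence classes), which is precisely the form in which it enters Kirwan's desingularization in Propositions~\ref{bl1} and~\ref{bl2}.
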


\begin{proposition}[\protect{\cite[Proposition 4.6]{CM17}}]
$\bM_n\cong \bK_n(2,2)$ where $\bM_n^s$ corresponds to $\bK_n(2,2)^s$.
\end{proposition}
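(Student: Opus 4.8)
The plan is to construct an explicit dictionary identifying Kronecker $V^*$-modules of dimension vector $(2,2)$ with sheaves admitting the resolution~\eqref{eqn:resolution}, and then to match the corresponding notions of (semi)stability. Writing $\PP^n=\PP(V)$ so that $V^*=H^0(\cO_{\PP^n}(1))$, a $2\times 2$ matrix of linear forms is exactly an element of $V^*\otimes\mathfrak{gl}_2$, equivalently a morphism
\[
\psi\colon \cO_{\PP^n}(-1)\otimes\CC^2\longrightarrow \cO_{\PP^n}\otimes\CC^2 ,
\]
and this is the same datum as a Kronecker $V^*$-module $M=(\psi_i)$ of dimension vector $(2,2)$. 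The assignment $M\mapsto F:=\mathrm{coker}(\psi)$ then produces a sheaf with a resolution of the form~\eqref{eqn:resolution} whenever $\psi$ is a sheaf-injection; conversely, by item~(3) of the summary in Section~\ref{ssec:Simpson}, every sheaf parameterized by $\bM_n$ carries such a resolution, so the construction is surjective onto the closed points of $\bM_n$.

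Next I would reconcile the two equivalence relations. The resolution~\eqref{eqn:resolution} of a fixed $F$ is unique up to $\mathrm{Aut}(\cO_{\PP^n}(-1)\otimes\CC^2)\times\mathrm{Aut}(\cO_{\PP^n}\otimes\CC^2)\cong\GL_2\times\GL_2$, acting on $V^*\otimes\mathfrak{gl}_2$ by $\psi\mapsto B\,\psi\,A$ as in the definition preceding Lemma~\ref{lem:stability}. Since $\GL_2\times\GL_2=(\SL_2\times\SL_2)\cdot(\CC^*\times\CC^*)$ and the central two-torus $(s,t)$ acts only through the single scalar $st$, i.e.\ by overall scaling, passing to $\GL_2\times\GL_2$-orbits of matrices coincides with passing to $\SL_2\times\SL_2$-orbits on the projectivization $\PP(V^*\otimes\mathfrak{gl}_2)$. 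This matches equivalence of Kronecker modules (the defining relation of $\bK_n(2,2)$) with isomorphism of the cokernel sheaves $F$.

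The crux, which I expect to be the main obstacle, is the comparison of stabilities: I must show that GIT-semistability for the $\SL_2\times\SL_2$-action on $\PP(V^*\otimes\mathfrak{gl}_2)$, King's numerical criterion for the module $M$, and Simpson semistability of $F=\mathrm{coker}(\psi)$ all coincide (and likewise for the strict inequalities). The delicate point is that Simpson stability is phrased through the reduced Hilbert polynomial inequality $p(F')(m)\le p(F)(m)$ for \emph{all} proper subsheaves $F'\subset F$, whereas quiver stability is a single numerical inequality on dimension vectors. One must verify that every saturated destabilizing subsheaf $F'$ is the cokernel of a sub-resolution, hence arises from a proper sub-representation $(\CC^{a'},\CC^{b'})$ of $M$, and that the polynomial comparison then reduces to the corresponding inequality on $(a',b')$. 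The conceptual tool guaranteeing this matching is the functorial GIT construction of sheaf moduli through Kronecker modules due to \'Alvarez-C\'onsul and King, which identifies Simpson stability with $\theta$-stability of the associated module for the appropriate stability parameter; applied here it yields the correspondence $\bM_n^s\leftrightarrow\bK_n(2,2)^s$ on stable points and on $S$-equivalence classes in general.

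Finally I would upgrade the resulting bijection to an isomorphism of varieties. The cokernel construction defines an $\SL_2\times\SL_2$-invariant family of semistable sheaves over the semistable locus of $\PP(V^*\otimes\mathfrak{gl}_2)$, so by corepresentability of the Simpson moduli functor it descends to a morphism $\bK_n(2,2)\to\bM_n$, while uniqueness of the resolution furnishes the inverse on points. Since $\bM_n$ is normal (item~(1) of the summary) and the morphism is bijective, Zariski's main theorem promotes it to an isomorphism, carrying $\bK_n(2,2)^s$ onto $\bM_n^s$ as claimed.
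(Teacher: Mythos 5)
The paper itself gives no argument for this proposition---it is imported verbatim from \cite[Proposition 4.6]{CM17}---and your outline does reproduce the architecture of the cited proof: cokernel dictionary, matching of equivalence relations, stability comparison, then descent and Zariski's main theorem against normality of $\bM_n$. The genuine gap sits exactly at the step you yourself call the crux. The \'Alvarez-C\'onsul--King theorem cannot deliver the stability comparison here: their functor attaches to $F$ the Kronecker module $H^0(F(m))\otimes H^0(\cO_{\PP^n}(m'-m))\to H^0(F(m'))$ for twists $m'\gg m\gg 0$, whose dimension vector is $\bigl(P(F)(m),P(F)(m')\bigr)$. For the sheaves of $\bM_n$ one computes $P(F)(m)=2\binom{m+n-1}{n-1}$ from \eqref{eqn:resolution}, so this vector is never $(2,2)$ (already $(P(F)(0),P(F)(1))=(2,2n)$, and their hypotheses require both twists to be large in any case). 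The $(2,2)$-module of this proposition is the resolution matrix itself, which is not of their type, so ``applied here it yields the correspondence'' is not a proof: the claim that saturated destabilizing subsheaves of $\mathrm{coker}(\psi)$ correspond to destabilizing subrepresentations, with the Hilbert-polynomial inequality reducing to the inequality on dimension vectors, is precisely what remains to be verified, and your proposal contains no argument for it. In this small case it can be done by hand (and this is in effect what \cite{CM17} does): a destabilizing quotient of $F$ is a pure multiplicity-one quotient of $\cO_{\PP^n}\oplus\cO_{\PP^n}$, hence of the form $\cO_H$ up to twist, and its existence is equivalent to $\psi$ being $G$-equivalent to a block-triangular matrix; compare Lemma~\ref{lem:stability} and the case $n=2$ in \cite{LP93a}.

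A second, smaller omission: to define the morphism on all of $\bK_n(2,2)$ you must show that every \emph{semistable} $\psi$ is a sheaf injection, i.e.\ $\det\psi\not\equiv 0$; you only hypothesize this (``whenever $\psi$ is a sheaf-injection''). It is true, but needs the following observation: if $\det\psi\equiv 0$ then $\psi$ has generic rank one, so $\psi_{ij}=u_iv_j$, and since all entries are linear forms either $u$ or $v$ may be taken to be a constant vector; a constant row or column operation then produces a zero row or column, i.e.\ a subrepresentation of dimension vector $(2,1)$ or $(1,0)$, contradicting semistability by King's criterion. Without this, the cokernel family is not a flat family with Hilbert polynomial $P(m)$ and your morphism is simply undefined at such points. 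The remaining steps---uniqueness of the minimal free resolution, the reduction from $\GL_2\times\GL_2$ to $\SL_2\times\SL_2$ on the projectivization, matching of polystable objects $\mathrm{diag}(g,h)\leftrightarrow\cO_H\oplus\cO_{H'}$, and ZMT (where normality of $\bM_n$ comes from smoothness of the Quot scheme via $\Ext^2(F,F)=0$, cf.\ Remark~\ref{luna}, so no circularity arises)---are sound.
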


Consider Kirwan's partial desingularization of $\bK_n(2,2)= \bX\git G$ along the loci described in Lemma~\ref{lem:stability}. Let $\bY_{0} \subset \bX^{ss}$ be the locus of matrices equivalent to (1) of Lemma~\ref{lem:stability}. At each point $M = g\cdot \mathrm{Id} \in \bY_{0}$, the normal bundle $N_{\bY_{0}/\bX^{ss}}|_{M}$ is isomorphic to $H \otimes \mathfrak{sl}_{2}$ where $H \cong V^{*}/\langle g\rangle$. 
From Luna's slice theorem, there is a normal neighborhood of $\overline{M}\in\bX\git G$ that is isomorphic to
\[
H\otimes \mathfrak{sl}_{2}\git \mathrm{Stab}\; M \cong H\otimes \mathfrak{sl}_{2}\git \SL_{2},
\]
where $\SL_{2}$ acts on $\mathfrak{sl}_{2}$ in the standard way and $H$ in a trivial way. Also, $\ZZ_2$ acts trivially. Thus, from \cite[Lemma 3.11]{Kir85}, 
\begin{proposition}\label{bl1}
Let $\pi_{1} :{ \bX^{1}} \to \bX^{ss}:=\bX^{0}$ be the blow-up of $\bX^{0}$ along $\bY_{0}$. Then GIT quotient $\bX^{1}\git G$ is the blow-up of $\bX\git G$ along $\bY_0\git G\cong \PP^n$.
\end{proposition}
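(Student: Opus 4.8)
The plan is to reduce the global statement to the \'etale-local model supplied by Luna's slice theorem and then to quote \cite[Lemma 3.11]{Kir85}. First I would record that $\bY_0$ is the correct centre for the first Kirwan blow-up: it is a smooth, closed, $G$-invariant subvariety of $\bX^{ss}$, and by Lemma~\ref{lem:stability} it is precisely the locus of semistable points whose stabiliser has reductive part of maximal dimension (namely $\SL_2$, as opposed to the $\CC^*$ occurring on the complementary strictly semistable stratum). Smoothness of $\bY_0$ and the normal-bundle identification $N_{\bY_0/\bX^{ss}}\cong H\otimes\mathfrak{sl}_2$ are already in hand from the discussion preceding the statement.

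Next I would exploit the Luna model recalled above: an \'etale neighbourhood of $\overline{M}\in\bX\git G$ is isomorphic to $H\otimes\mathfrak{sl}_2\git\SL_2$, where $\SL_2$ acts trivially on $H$ and standardly on $\mathfrak{sl}_2$ and $\ZZ_2$ acts trivially. In this model the centre $\bY_0$ is carried onto the $\SL_2$-fixed subspace $H\otimes 0\cong H$, so blowing $\bX^0$ up along $\bY_0$ corresponds \'etale-locally to blowing $H\otimes\mathfrak{sl}_2$ up along $H$. Because the centre is exactly the fixed-point locus of the slice representation, the slice construction commutes with the blow-up, and the local model for $\bX^1=\Bl_{\bY_0}\bX^0$ is $\Bl_{H}(H\otimes\mathfrak{sl}_2)$ with its induced $\SL_2$-action.

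Now I would apply \cite[Lemma 3.11]{Kir85} to this model with $R=\SL_2$. That lemma produces a linearisation on the blow-up, perturbed by a small multiple of the exceptional divisor, for which the semistable locus of $\Bl_{H}(H\otimes\mathfrak{sl}_2)$ lies over the semistable locus of $H\otimes\mathfrak{sl}_2$ and the induced map of quotients is the blow-up $\Bl_{H}(H\otimes\mathfrak{sl}_2\git\SL_2)$. These \'etale-local isomorphisms glue, since on both sides one has canonically the blow-up along a $G$-invariant centre, and we conclude that $\bX^1\git G$ is the blow-up of $\bX^0\git G=\bX\git G$ along $\bY_0\git G$. Finally, to identify the centre, note that the $G$-orbit of $[g\otimes\mathrm{Id}]$ is $\{[g\otimes D]:D\in\SL_2\}$ and two such orbits coincide if and only if the lines $\langle g\rangle$ agree; hence $\bY_0\git G$ is parametrised by $[g]\in\PP(V^*)$, that is $\bY_0\git G\cong\PP^n$.

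The main obstacle is the compatibility asserted in the middle two steps — that forming the GIT quotient genuinely commutes with the blow-up. Concretely one must check that Kirwan's perturbed linearisation selects the right semistable locus over $\bY_0$, introducing no new unstable points and forcing the exceptional divisor to descend to the exceptional divisor of $\Bl_{\bY_0\git G}(\bX\git G)$. This is exactly the content of \cite[Lemma 3.11]{Kir85}, so the genuine work is the verification that $(\bX^0,\bY_0,G)$ satisfies its hypotheses; the essential inputs are the vanishing $\Ext^2(F,F)=0$ (which makes the ambient GIT stage smooth, as in Remark~\ref{luna}) and the normal-bundle computation $N_{\bY_0/\bX^{ss}}\cong H\otimes\mathfrak{sl}_2$, which makes the slice representation the standard $\SL_2$-action used by the lemma.
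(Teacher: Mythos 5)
Your proof is correct and follows essentially the same route as the paper: the normal-bundle identification $N_{\bY_0/\bX^{ss}}\cong H\otimes\mathfrak{sl}_2$, Luna's \'etale-slice model for a neighborhood of $\overline{M}$ in $\bX\git G$, and \cite[Lemma 3.11]{Kir85} to convert that local linear model into the statement that the quotient of the blow-up is the blow-up of the quotient, which is exactly the chain of reasoning the paper gives before Proposition~\ref{bl1}. Two slips deserve correction, though neither affects the structure of the argument: the $\SL_2$-fixed subspace of $H\otimes\mathfrak{sl}_2$ is $H\otimes 0=\{0\}$, not a copy of $H$ --- the copy of $H$ carrying $\bY_0$ lives in the full slice $H\otimes\mathfrak{gl}_2\cong H\oplus(H\otimes\mathfrak{sl}_2)$, so the \'etale-local blow-up is $H\times \Bl_{0}(H\otimes\mathfrak{sl}_2)$ rather than a blow-up of $H\otimes\mathfrak{sl}_2$ ``along $H$'' --- and the vanishing $\Ext^2(F,F)=0$ is not an input to this proposition, since in the Kronecker-module presentation the ambient GIT stage is $\bX=\PP(V^*\otimes\mathfrak{gl}_2)$, which is smooth for trivial reasons; that vanishing matters only in Remark~\ref{luna}, for the Quot-scheme presentation used in the surface applications of Section~\ref{ihdoflocalsurface}.
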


Let $\bY_{1} \subset \bX^{ss}$ be the locus of matrices equivalent to item (2) of Lemma~\ref{lem:stability}, where $\bY_{1}$ is a smooth variety and $\overline{\bY}_{1}$ is singular along $\bY_{0}$. Let $\bY_{1}^{1}$ be the proper transform of $\bY_{1}$ along the blow-up map $\pi_1$. At each point $M = \left[\begin{array}{cc}g&0\\0&k\end{array}\right] \in \bY_{1}$, the normal bundle $N_{\overline{\bY}_{1}/\bX^{ss}}|_{M}$ is isomorphic to $K \otimes \langle e, f\rangle$, where $K = V^{*}/\langle g, k\rangle$ and $\{h,e,f\}$ is the standard basis of $\mathfrak{sl}_{2}$. There is also a normal neighborhood of $\overline{M}\in\bX\git G$ isomorphic to 
\begin{equation}\label{singulars}
K \otimes \langle e, f\rangle \git \mathrm{Stab}\; M \cong K \otimes \langle e, f\rangle\git \CC^*,
\end{equation}
where $\CC^*$ acts on $\langle e, f\rangle$ with weights $2,-2$ and on $K$ in a trivial fashion. Also, $\ZZ_2$ acts trivially. 
Then $\overline{\bY}_{1}^{1}\git G$ is the blow-up $\overline{\bY}_{1}\git G\cong \PP^{n}\times \PP^n/\ZZ_2$ along the diagonal $\bY_0\git G\cong \PP^n$. Applying \cite[Lemma 3.11]{Kir85} again,
\begin{proposition}\label{bl2}
Let $\pi_{2} : {\bX^{2}} \to (\bX^{1})^{ss}$ be the blow-up of $(\bX^{1})^{ss}$ along $(\overline{\bY_{1}^{1}})^{ss}$. Then GIT quotient $\bX^{2}\git G$ is the  blow-up of $\bX^{1}\git G$ along $\overline{\bY_{1}^{1}}\git G\cong \mathrm{bl}_{\Delta}(\PP^{n}\times \PP^n/\ZZ_2)$.
\end{proposition}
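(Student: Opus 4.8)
The plan is to mirror the proof of Proposition~\ref{bl1} and invoke Kirwan's commutation principle \cite[Lemma 3.11]{Kir85} for the second blow-up $\pi_2$. That lemma guarantees that if one blows up $(\bX^1)^{ss}$ along a smooth $G$-invariant closed subvariety whose semistable points have closed orbits with a common reductive stabilizer (after the standard twist of the linearization by a small multiple of the exceptional divisor), then the semistable GIT quotient of the blow-up is the blow-up of $(\bX^1)^{ss}\git G$ along the image of the center. Thus the proof reduces to (i) checking that $(\overline{\bY_1^1})^{ss}$ is an admissible center, and (ii) identifying its image in the quotient as $\mathrm{bl}_\Delta(\PP^n\times\PP^n/\ZZ_2)$.

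For (i), I would first verify that the proper transform $\bY_1^1$ and its closure are smooth along the relevant semistable locus. The point is that $\overline{\bY_1}$ is singular exactly along $\bY_0$, so the first blow-up $\pi_1$ centered at $\bY_0$ separates the two diagonal eigendirections and renders the strict transform smooth. I would then read off the isotropy from the local model \eqref{singulars}: a Luna slice at such a point is $K\otimes\langle e,f\rangle$ with $\CC^*$ acting by weights $\pm 2$ and the $\ZZ_2$ acting trivially, so the connected stabilizer is the one-dimensional torus $\CC^*$ and the corresponding semistable orbits are closed. This places us exactly in the hypotheses of \cite[Lemma 3.11]{Kir85}.

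For (ii), I would use the symmetric-product description of the quotient. A point of $\bY_1$ is represented by $\mathrm{diag}(g,k)$ with $[g]\neq[k]$ in $\PP(V^*)\cong\PP^n$; modulo $G$ and the residual $\ZZ_2$ interchanging the two blocks, such orbits are parameterized by unordered pairs $\{[g],[k]\}$, which recovers $\overline{\bY_1}\git G\cong\PP^n\times\PP^n/\ZZ_2$. The diagonal $\Delta=\{[g]=[k]\}$ is precisely the image $\bY_0\git G\cong\PP^n$, and the first blow-up $\pi_1$ at $\bY_0$ induces on the quotient the blow-up of $\PP^n\times\PP^n/\ZZ_2$ along $\Delta$ (as recorded just before the statement). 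Hence $\overline{\bY_1^1}\git G\cong\mathrm{bl}_\Delta(\PP^n\times\PP^n/\ZZ_2)$, completing the description of the center.

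The main obstacle is the iterative nature of Kirwan's algorithm: one must confirm that after the first blow-up the only stratum of positive stabilizer dimension remaining is $(\overline{\bY_1^1})^{ss}$ (with stabilizer $\CC^*$), so that the perturbed linearization on $\bX^1$ can be further perturbed for $\pi_2$ without introducing unintended strictly semistable points, and Lemma 3.11 genuinely applies at this step. Concretely this amounts to the smoothness and closed-orbit checks above together with the bookkeeping of the trivially-acting $\ZZ_2$, which must be tracked to ensure it contributes neither to the quotient nor to the exceptional structure. Once these are in place, the two blow-ups $\pi_1,\pi_2$ exhaust the strictly semistable loci of Lemma~\ref{lem:stability}, so $\bX^2\git G$ is the partial desingularization of $\bM_n$ carrying only finite stabilizers.
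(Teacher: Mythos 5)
Your proposal is correct and takes essentially the same route as the paper: read off the stabilizer $\CC^*$ (with trivially acting $\ZZ_2$) from the Luna-slice model \eqref{singulars}, identify $\overline{\bY_{1}^{1}}\git G$ as $\mathrm{bl}_{\Delta}(\PP^{n}\times \PP^n/\ZZ_2)$ because the first blow-up induces on quotients the blow-up of $\overline{\bY}_{1}\git G\cong \PP^{n}\times \PP^n/\ZZ_2$ along the diagonal $\bY_0\git G\cong \PP^n$, and then apply \cite[Lemma 3.11]{Kir85}. The additional checks you flag (smoothness of the proper transform, closedness of orbits, and tracking the residual $\ZZ_2$) are precisely the hypotheses the paper verifies implicitly through this local model, so there is no substantive difference in approach.
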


Let $\overline{\pi}_{i}:\bX^{i}\git G \lr \bX^{i-1}\git G$ be the induced quotient map of $\pi_i$ for $i=1,2$. 
\begin{enumerate}
\item For $M \in \bY_{0}$,
\[
        \overline{\pi}_{1}^{-1}(\overline{M}) 
        \cong \PP(H\otimes \mathfrak{sl}_{2})\git \SL_{2}.
\]
The second blow-up $\overline{\pi}_{2}$ also provides partial a desingularization of $\PP(H \otimes \mathfrak{sl}_{2})\git \SL_{2}$ which is isomorphic to moduli space $\cM_{0}(\PP H, 2)$ (\cite[Theorem 4.1]{Kie07}). 

\item For $M \in \bY_{1}^{1}\setminus \bY_{0}^{1}$, 
\[
        \overline{\pi}_{2}^{-1}(\overline{M})
        \cong \PP^{n-2} \times \PP^{n-2}.
\]
\end{enumerate}

Thus, we have the following theorem.
\begin{theorem}[\protect{\cite[Theorem 5.1]{CM17}}]\label{desingularization}
The partial desingularization of $\bM_n$ is the second blown-up space $\bX^{2}\git G\cong \bK_n=\cM_0(\Gr(2, n+1), 2)$:
\[
\bX\git G\cong \bM_n\stackrel{\overline{\pi}_{1}}{\longleftarrow} \bX^{1}\git G:=\bM_n'\stackrel{\overline{\pi}_{2}}{\longleftarrow} \bX^{2}\git G=\bK_n.
\]
\end{theorem}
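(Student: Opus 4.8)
The plan is to promote the birational identification $\bM_n^s\cong\bK_n(2,2)^s$ of \cite[Proposition 4.6]{CM17} to a global isomorphism by exhibiting $\bX^2\git G$ as a parameter space for genuine degree-two stable maps, and then checking that the resulting classifying morphism to $\bK_n$ is an isomorphism fiber-by-fiber over the singular strata of $\bM_n$. First I would record the modular dictionary underlying the birational map. Writing a Kronecker module $M\in\Hom(V^*\otimes\CC^2,\CC^2)$ with the source $\CC^2$ identified with $H^0(\PP^1,\cO(1))$ and the target $\CC^2$ as the frame of a rank-two quotient, $M$ is precisely the data of a sheaf surjection $V^*\otimes\cO_{\PP^1}\to\cO_{\PP^1}(1)^{\oplus2}$, i.e. a degree-two morphism $\PP^1\to\Gr(2,n+1)$ with balanced quotient bundle; the two factors of $G=\SL_2\times\SL_2$ are exactly reparametrization of the source and change of frame of the quotient. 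On the stable loci this matches $M\in\bX^s$ with immersed conics and realizes $\bM_n$ as the minimal contraction of $\bK_n$ referred to in the introduction.

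Next I would construct the comparison morphism $\Phi\colon\bX^2\git G\to\bK_n$. By Kirwan's procedure every point of the twice–blown-up semistable locus $(\bX^2)^{ss}$ is now stable, so the tautological Kronecker module over it has only finite stabilizers. The goal of this step is to extend the family of immersed conics above to a flat, $G$-equivariant family of stable maps over all of $(\bX^2)^{ss}$, of constant arithmetic genus zero and degree two, whose restrictions to the two exceptional divisors of $\pi_1$ and $\pi_2$ record the limiting (reducible or multiply covered) maps. The universal property of the Fulton--Pandharipande coarse moduli space $\bK_n$ then furnishes a $G$-invariant classifying morphism that descends to $\Phi$, which is proper and birational by construction and is compatible with the Kirwan blow-down $p:=\overline{\pi}_1\circ\overline{\pi}_2\colon\bX^2\git G\to\bM_n$.

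Finally I would verify that $\Phi$ is an isomorphism by matching $p$-fibers with $\Phi$-images stratum by stratum over $\mathrm{Sing}(\bM_n)\cong\Sym^2(\PP^{n*})$. Over the diagonal $\bY_0\git G\cong\PP^n$ of double hyperplanes $\cO_H\oplus\cO_H$, the fiber of $p$ is $\cM_0(\PP H,2)=\cM_0(\PP^{n-1},2)$ by fiber description~(1) together with \cite[Theorem 4.1]{Kie07}, and this is exactly the locus of conics in $\Gr(2,n+1)$ degenerating onto $\cO_H\oplus\cO_H$; over the residual stratum of distinct hyperplanes $\cO_H\oplus\cO_{H'}$ the fiber is $\PP^{n-2}\times\PP^{n-2}$ by description~(2), matching the reducible conics split between $H$ and $H'$. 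Since $\Phi$ is proper and birational it is surjective, and these fiber identifications show it separates points; as $\bK_n$ is normal with at worst finite quotient singularities, Zariski's main theorem then forces the bijective proper birational $\Phi$ to be an isomorphism. The hard part will be the middle step: extending the tautological family flatly across the two exceptional divisors and proving that the degenerate Kronecker data there genuinely assemble into stable maps with the asserted fibers. Here Luna's slice theorem (Remark~\ref{luna}) and Kiem's local model reduce the analysis near each stratum to the already-resolved cases $\PP(H\otimes\mathfrak{sl}_2)\git\SL_2$ and~\eqref{singulars}, so the real delicacy is globalizing these local identifications compatibly along the whole singular locus.
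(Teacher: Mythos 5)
You should first be aware that the paper does not actually prove this statement: Theorem~\ref{desingularization} is imported verbatim from \cite[Theorem 5.1]{CM17}, and what the paper itself supplies is only the surrounding Kirwan setup (Lemma~\ref{lem:stability}, Propositions~\ref{bl1} and~\ref{bl2}, and the two fiber descriptions). Your proposal therefore attempts strictly more than the paper does, and its architecture --- translate Kronecker modules into degree-two maps to $\Gr(2,n+1)$, build a $G$-equivariant family of stable maps over $(\bX^2)^{ss}$, use the coarse-moduli property of $\bK_n$ to get a classifying morphism descending to $\Phi\colon \bX^{2}\git G\to\bK_n$, and finish with bijectivity plus Zariski's main theorem on the normal target --- is in fact the architecture of the cited proof and of its prototype \cite[Theorem 4.1]{Kie07}. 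Your matching of the exceptional fibers with the descriptions (1)--(2) preceding the theorem is also consistent.

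The genuine gap is the step you yourself flag: producing the flat family of stable maps over all of $(\bX^{2})^{ss}$. This is not a technicality to be deferred --- it is the entire mathematical content of \cite[Theorem 5.1]{CM17}; every other step in your outline is formal once it is done. Moreover, the mechanism you propose for it cannot work as stated. Luna slices and Kiem's local models give only analytic-local (or \'etale-local) identifications of neighborhoods of strata with model quotients; such identifications depend on choices of slices, are not canonical, and hence satisfy no cocycle condition, so a family of stable maps cannot be assembled by ``globalizing'' them --- that globalization is exactly what fails. The actual proofs avoid gluing altogether by a global construction: pull back the tautological homomorphism $V^{*}\otimes\cO\to\cO_{\PP^{1}}(1)^{\oplus 2}$ from $\bX^{ss}\times\PP^{1}$ to $\bX^{2}\times\PP^{1}$ and perform elementary modifications along the (canonically defined, divisorial) exceptional loci until the modified homomorphism is surjective with locally free kernel; surjectivity then produces the family of stable maps directly. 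Two smaller inaccuracies feed into the same gap: the complement of $\bX^{s}/G$ in $\bK_n$ contains not only reducible conics and double covers of lines but also embedded smooth conics with unbalanced restricted tautological bundle (those whose $2$-planes share a common vector), so ``stable $=$ immersed conic'' is not the right dichotomy; and injectivity of $\Phi$ over $\mathrm{Sym}^{2}(\PP^{n*})$ (your ``separates points'') does not follow from the fiber descriptions quoted from the paper, since those give the fibers of $\overline{\pi}_{1}$, $\overline{\pi}_{2}$ only as abstract varieties --- identifying them modularly with loci of stable maps (pairs of pencils through $(H,H')$, respectively maps into $\PP H$ with $H=V^{*}/\langle g\rangle$) again requires the family to have been constructed first.
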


\subsection{(Intersection)  E-polynomial of a variety}
We review several polynomial invariants related to a quasi-projective variety $X$. All polynomial invariants defined in this section are related to the mixed Hodge structure on the compactly supported cohomology $\rH_{c}^*(X,\CC)$, the cohomology $\rH^*(X,\CC)$, the compactly supported intersection cohomology $\rIH_{c}^*(X,\CC)$, and intersection cohomology $\rIH^*(X,\CC)$. We use the notation $h_c^{p,q,i}$ for $\dim_{\CC} \mathrm{Gr}_F^p\mathrm{Gr}_{p+q}^W\rH_{c}^i(X,\CC)$, where $F^{\bullet}$ and $W_{\bullet}$ are Hodge and weight filtration respectively.

\begin{definition}
Let $X$ be a quasi-projective variety of dimension $\dim_{\CC}X=n$. 
The compactly supported Poincar\'{e}-Deligne  polynomial of $X$ can be expressed as 
\[
PD^c_X(u,v,t):=\sum_{p,q=0}^n \sum_{i=0}^{2n} h_c^{p,q,i} u^p v^q t^i ;
\]
the compactly supported  E-polynomial (or Serre polynomial) of $X$ as 
\[
E^c_X(u,v):=PD^c_X(u,v,-1)=\sum_{p,q=0}^n \sum_{i=0}^{2n} (-1)^i h_c^{p,q,i} u^p v^q ;
\]
the compactly supported Poincar\'{e} polynomial of $X$ as
\[
P^c_X(t):=PD^c_X(1,1,t)=\sum_{p,q=0}^n \sum_{i=0}^{2n} h_c^{p,q,i} t^i=\sum_{i=0}^{2n} \dim_{\CC}\rH_{c}^i(X,\CC) t^i ;
\]
and the virtual Poincar\'{e} polynomial of $X$ as
\[
P^{vir}_X(t):=PD^c_X(-t,-t,-1)=\sum_{p,q=0}^n \sum_{i=0}^{2n}(-1)^i h_c^{p,q,i}(-t)^{p+q}=\sum_{m=0}^{2n} \sum_{i=0}^{2n}(-1)^{i+m} \dim_{\CC} \mathrm{Gr}_{m}^W\rH_{c}^i(X,\CC) t^{m} .
\]
\end{definition}

The compactly supported  E-polynomial $E^c_X(u,v)$ of $X$ has a special property from the Grothendieck group of varieties $K_0(var)$ (or $K_0(var/pt)$ in a relative version). 
\begin{definition}
The Grothendieck group $K_0(var)$ of complex algebraic varieties is a free abelian group of isomorphism classes with an equivalence relation
$$ [X]=[Z]+[X\setminus Z] ,$$
where $Z$ is a Zariski closed subvariety in a variety $X$. It also has a ring structure with multiplication structure 
 $$ [X]\cdot[Y]=[X\times Y].$$ 
\end{definition}
For a quasi-projective variety $X$ of dimension $n$, the compactly supported cohomology group  $\rH_c^i(X,\QQ)$ carries a mixed Hodge structure, which induces the class
$$[\rH_c^*(X)]:=\sum_{i=0}^{2n}(-1)^i [\rH_c^i(X,\QQ)]$$
in the Grothendieck group $K_0(HS)$ of pure Hodge structures. There is a ring homomorphism $$[H_c^*]:K_0[var] \rightarrow K_0(HS)$$ 
defined by $[\rH_c^*]([X])=[\rH_c^*(X)]$. Also, there is another ring homomorphism (the \emph{Hodge-Euler polynomial})
$$E_{Hdg}: K_0(HS) \rightarrow \ZZ[u^\pm,v^\pm],\;E_{Hdg}([H])=\sum_{p,q} \left( \dim_{\CC} \mathrm{Gr}_F^p\mathrm{Gr}_{p+q}^W H_\CC \right) u^p v^q .$$
Hence the compactly supported  E-polynomial $E^c_X(u,v)$ of variety $X$ is nothing but $\left( E_{Hdg}\circ [H_c^*]\right) ([X])$.
\begin{notation}
$E_c (X):=E^c_X(u,v) \in \ZZ[u^\pm,v^\pm]$.
\end{notation} 

From the ring homomorphism $E_{Hdg}\circ [H_c^*]$, 
\begin{proposition}\label{proepoly}
\begin{enumerate}
\item $\rE_c(\CC^n)=(uv)^n.$
\item $\rE_c(X)=\rE_c(Z)+\rE_c(X\setminus Z)$ for any closed subset $Z\subset X$. \label{addition}
\item $\rE_c(X)=\rE_c(F)\cdot \rE_c(B)$ for the Zariski (resp. \'etale) locally trivial fibration $X\lr B$ with constant fiber $F$ (resp. $\Gr(k,n)$) (\cite[Lemma 3.1]{BJ12}). \label{multiplication}\label{productofE}
\end{enumerate}
\end{proposition}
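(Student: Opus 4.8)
The plan is to derive all three statements from the single structural fact, established in the paragraphs just above, that
\[
E_c = E_{Hdg}\circ [H_c^*]\colon K_0(var)\longrightarrow \ZZ[u^{\pm},v^{\pm}]
\]
is a composition of two \emph{ring} homomorphisms, together with the scissor relation $[X]=[Z]+[X\setminus Z]$ that defines $K_0(var)$. With these in hand, parts (1) and (2) become essentially formal, and only the fibration statement (3) carries genuine content.

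For the additivity statement (part~(2)), I would simply apply the additive part of the homomorphism $E_c$ to the relation $[X]=[Z]+[X\setminus Z]$ in $K_0(var)$. At the level of Hodge structures this reflects the long exact sequence in compactly supported cohomology attached to the closed--open decomposition $(Z, X\setminus Z)$, which is an exact sequence of mixed Hodge structures; passing to alternating sums in $K_0(HS)$ gives $[H_c^*(X)]=[H_c^*(Z)]+[H_c^*(X\setminus Z)]$, and applying $E_{Hdg}$ yields the claim.

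For part~(1) I would bootstrap from the one-dimensional case. Writing $\CC\cong \PP^1\setminus\{\mathrm{pt}\}$ and using additivity, $E_c(\CC)=E_c(\PP^1)-E_c(\mathrm{pt})$. Since $\PP^1$ is smooth and projective its cohomology is pure, with $H^0$ of Hodge type $(0,0)$ and $H^2$ of type $(1,1)$, so $E_c(\PP^1)=1+uv$ and hence $E_c(\CC)=uv$. Multiplicativity of the ring homomorphism then gives $E_c(\CC^n)=E_c(\CC)^n=(uv)^n$. (Equivalently, one checks directly that $H_c^*(\CC^n)$ is concentrated in degree $2n$ and is pure of type $(n,n)$, i.e. the Tate twist $\QQ(-n)$.)

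For the multiplicativity of part~(3), the Zariski locally trivial case follows by stratifying: choose a finite decomposition of the base $B$ into locally closed subvarieties $B_j$ over each of which the fibration restricts to an honest product $X|_{B_j}\cong F\times B_j$. Additivity (part~(2)) reduces the computation to the strata, the ring-homomorphism property supplies $E_c(F\times B_j)=E_c(F)\,E_c(B_j)$ via K\"unneth for compactly supported cohomology, and reassembling gives $E_c(X)=E_c(F)\sum_j E_c(B_j)=E_c(F)\,E_c(B)$. The remaining étale locally trivial case with fiber $\Gr(k,n)$ is the main technical point, since étale triviality need not imply Zariski triviality and the product formula now requires that the monodromy act trivially on $H^*(\Gr(k,n))$ and that the Leray spectral sequence degenerate. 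This holds precisely because $H^*(\Gr(k,n))$ is of pure Tate type, generated by Schubert classes, leaving no room for nontrivial monodromy; I would invoke \cite[Lemma 3.1]{BJ12} to conclude. I expect this Grassmannian étale case to be the only real obstacle, the first two parts being immediate consequences of the ring-homomorphism structure of $E_c$.
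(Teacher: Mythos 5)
Your proposal is correct and follows essentially the same route as the paper, which derives all three parts from the ring-homomorphism structure of $E_{Hdg}\circ [H_c^*]$ on $K_0(var)$ and cites \cite[Lemma 3.1]{BJ12} for the fibration statement (3). Your added details (the long exact sequence underlying additivity, the computation $\rE_c(\CC)=\rE_c(\PP^1)-\rE_c(\mathrm{pt})=uv$, the Noetherian stratification plus K\"unneth argument in the Zariski case, and the triviality of monodromy on the Tate-type cohomology of $\Gr(k,n)$ in the \'etale case) are all accurate elaborations of what the paper leaves implicit.
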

In particular, the virtual Poincar\'{e} polynomial $P^{vir}_X(t)=E_c(X)(-t,-t)$ has similar properties to (\ref{addition}) and (\ref{multiplication}), called \emph{motivic properties}. We can define $PD_X(u,v,t)$, $E_X(u,v)$ and $P_X(t)$ from cohomology groups after replacing $h_c^{p,q,i}$ by $h^{p,q,i}:=\dim_{\CC} \mathrm{Gr}_F^p\mathrm{Gr}_{p+q}^W\rH^i(X,\CC)$, but the map $[H^*]:K_0[var] \rightarrow K_0(HS)$ is not homomorphism. Thus, motivic properties do not hold for any of the polynomial invariants from cohomology groups. However, there are a number of useful identities for calculation of these invariants. Let us shortly denote $E_X(u,v)$ by $E(X)$.
\begin{remark}\label{useful}
\begin{enumerate}
\item If $X$ is smooth and connected, then $\rE(X)(u,v)=u^n v^n \rE_c(X)(u^{-1},v^{-1})$ from the Poincar\'{e} duality.
\item If  $X$ is a compactification of an algebraic variety $U$, then $\rE_c(U)=\rE(X)-\rE(X\setminus U)$ (\cite[Section 5.5.2]{peters2008mixed}).\label{excisionofcompact}
\item If $\pi:(\tilde{X}, E)\to (X,D)$ is a proper modification with discriminant $D$, then $\rE(X)=\rE (\tilde{X})-\rE (E)+\rE (D)$ (\cite[Theorem 5.37]{peters2008mixed}). \label{resolution}
\item If $X$ is projective, then $\rE_c(X)=\rE(X)$. \label{projectivecase}
\item If $X$ is pure (i.e., $h_c^{p,q,i}=0$ for $p+q\neq i$) and balanced (i.e., $ h_c^{p,q,i}=0$ for $p\neq q$) type, then $P^c_X(t)=P^{vir}_X(t)$. \label{pureandbalanced}
\end{enumerate}
\end{remark}

\begin{definition}
The (compactly supported) intersection cohomology of a complex $n$-dimensional variety $X$ is defined by the hypercohomology
\begin{equation*}
\rIH_{(c)}^i(X,\QQ):= \HH_{(c)}^{i}(X,IC_X[-n]),
\end{equation*}
where $IC_X$ is the \emph{intersection complex} on $X$ of the middle perversity.
\end{definition}
Since $\rIH_{c}^i(X,\QQ)$ carries a mixed Hodge structure, we define the (compactly supported) intersection cohomology  E-polynomial
$$\rIE_{(c)} (X)=E_{Hdg}\circ [IH_{(c)}^*]$$ 
where the map $[IH_{(c)}^*]$ is defined by $X  \mapsto [IH_{(c)}^*(X)]:=\sum\limits_{i=0}^{2n}(-1)^i [\rIH_{(c)}^i(X,\QQ)] \in K_0[HS]$ for a quasi-projective variety $X$. Unfortunately, $[IH_{(c)}^*]$ does not provide group homomorphism from $K_0(var)$ in general. 

The (compactly supported) cohomlogy $\rH_{(c)}^i(X,\QQ)$ and the (compactly supported) intersection cohomology $\rIH_{(c)}^i(X,\QQ)$ of variety $X$ are not isomorphic  in general when $X$ is a singular variety. However, there is an isomorphism for some special cases. 
\begin{proposition}\label{rationalsmooth}(\cite[Theorem 6.6.3]{maxim2018intersection} and \cite[Proposition A.1]{Bri99})
If a variety $X$ has at most finite quotient singularities (more generally, rationally smooth manifold), then $IC_X$ is quasi-isomorphic to $\QQ_X[\dim X]$. In particular, $$\rIH_{(c)}^*(X,\QQ)=\rH_{(c)}^*(X,\QQ).$$
\end{proposition}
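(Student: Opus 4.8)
The plan is to identify $IC_X$ with the shifted constant sheaf $\QQ_X[n]$, where $n=\dim X$, directly in the derived category, and then pass to (compactly supported) hypercohomology. I would use the Goresky--MacPherson--Deligne axiomatic characterization: writing $U\subset X$ for the smooth locus with open inclusion $j\colon U\hookrightarrow X$, the intersection complex $IC_X$ is the \emph{unique} (up to quasi-isomorphism) extension of $\QQ_U[n]$ satisfying the support and the dual cosupport conditions, and any such complex is automatically Verdier self-dual. The strategy is to verify that the candidate $\QQ_X[n]$ meets all of these requirements; uniqueness then forces $IC_X\cong\QQ_X[n]$.

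First I would dispatch normalization and support. Normalization is immediate, since $\QQ_X[n]|_U=\QQ_U[n]$. For support, the complex $\QQ_X[n]$ has a single nonvanishing cohomology sheaf, $\mathcal H^{-n}(\QQ_X[n])=\QQ_X$, so the support condition, which only constrains the cohomology sheaves in degrees $i>-n$, holds vacuously for any $X$. Thus the entire content of the proposition is concentrated in the cosupport condition, and this is exactly where rational smoothness is needed.

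Next I would recall the sheaf-theoretic formulation of rational smoothness: a variety $X$ of pure dimension $n$ is rationally smooth precisely when its dualizing complex satisfies $\omega_X\cong\QQ_X[2n]$, equivalently when the Verdier dual of $\QQ_X[n]$ is again $\QQ_X[n]$, i.e. $\QQ_X[n]$ is self-dual. For a finite quotient singularity this can be checked locally: writing a neighborhood as $Y/G$ with $Y$ smooth of dimension $n$ and $G$ finite, the quotient map $\pi$ is finite, hence proper with $\pi_*=\pi_!$, so $\omega_{Y/G}=(\pi_*\omega_Y)^G=(\pi_*\QQ_Y[2n])^G=\QQ_{Y/G}[2n]$, using that taking $G$-invariants is exact with $\QQ$-coefficients. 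Granting self-duality of $\QQ_X[n]$, the cosupport condition is literally the Verdier dual of the support condition, which was already verified; hence all the axioms hold and $IC_X\cong\QQ_X[n]$. Taking $IC_X[-n]\cong\QQ_X$ and applying $\HH^i_{(c)}(X,-)$ yields $\rIH^i_{(c)}(X,\QQ)=\rH^i_{(c)}(X,\QQ)$, an isomorphism of mixed Hodge structures, so the comparison descends to the E-polynomials as well.

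The main obstacle is the cosupport verification: one must be sure that rational smoothness is genuinely the hypothesis that upgrades the automatic support condition to cosupport through duality. Concretely this is the identification of the costalk $i_x^!\QQ_X[n]$ with the local cohomology near $x$, so that rational smoothness amounts to the link of $x$ being a rational homology sphere; equivalently it is the statement $\omega_X\cong\QQ_X[2n]$. For the applications in this paper, where $\bK_n=\cM_0(\Gr(2,n+1),2)$ carries only finite quotient singularities by \cite[Theorem 2]{FP97}, this reduces to the elementary invariant-theoretic computation indicated above.
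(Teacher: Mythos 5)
Your proposal is correct, but note that the paper contains no proof of this proposition at all---it is quoted directly from \cite[Theorem 6.6.3]{maxim2018intersection} and \cite[Proposition A.1]{Bri99}, and your argument is essentially the standard proof given in those references: the middle-perversity support axioms hold vacuously for $\QQ_X[\dim X]$ since it has a single cohomology sheaf, rational smoothness supplies Verdier self-duality (equivalently $\omega_X\cong \QQ_X[2\dim X]$, verified for finite quotient singularities by your invariant-theoretic computation $(\pi_*\QQ_Y[2\dim X])^G\cong \QQ_{Y/G}[2\dim X]$), hence the cosupport axioms follow by duality, and uniqueness of the middle-perversity extension forces $IC_X\cong\QQ_X[\dim X]$. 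Since your route coincides with the one underlying the cited results, there is nothing further to reconcile.
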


\subsection{Comparison via algebraic stratification}
We give a relationship between polynomial invariants of $X$ from $\rH_{(c)}^i(X,\QQ)$ and $\rIH_{(c)}^i(X,\QQ)$. We give a more general statement ahead. Let $f:X\lr Y$ be a proper morphism of complex algebraic varieties. We fix a complex \emph{algebraic Whitney stratification} $\cV$ of $f$ such that all strata of $X$ and $Y$ are smooth and $f$ is a stratified submersion. This satisfies the \emph{frontier condition}: if $W \cap \bar{V} \neq \emptyset $, then $W \subset \bar{V}$. We have partial order in $\cV$ by $W\leq V$ if $W \subset \overline{V}$ and $W<V$ for $\dim (W)<\dim(V)$. Let $Y^{\circ}$ be a dense open stratum in $Y$, then $Y^{\circ}$ is the maximal element in $\cV$. For each order pair $W < V$ and $w\in W$, consider a local analytic embedding $(V,w) \hookrightarrow (\CC^n,0)$ of neighborhood $(V,w)$ of $w$.  Let $N$ be a smooth, normal neighborhood of $w$ transversally meeting with $W$ only at $w$ and $\dim N=\mathrm{codim}_{\CC^n} W$. Let $L_{w,V}:=V\cap N\cap \partial B_{\delta}(w)$, where $B_{\delta}(w)$ is an open ball in $\CC^n$ with radius $0<\delta \ll 1$ centered at $w$. Then the stratification satisfies
\begin{itemize}
\item the open cone $c^0L_{w,V}:=\left(L_{w,V}\times [0,1)\right) /\left(L_{w,V}\times\{0\}\right)$ is homeomorphic to $V\cap N\cap  B_{\delta}(w)$, and
\item the homeomorphic type of $L_{w,V}$ does not depend on the choice of $w\in W$.
\end{itemize}

\begin{notation}
We call $L_{w,V}$ the \emph{link} $L_{W,V}$ of $W$ in $V$ and the open cone of $L_{W,V}$ is denoted as $c^{\circ}L_{W,V}$. We denote $c^{\circ}L_{W,V}$ by $c^{\circ}L_{W,\bar{V}}$ unless stated otherwise.
\end{notation}

Since the open cone has canonical mixed Hodge structure, we can define the  E-polynomials $\rE(c^\circ L_{W,V})$ and $\rIE(c^\circ L_{W,V})$. The difference between $\rE(X)$ and $\rIE(X)$ for a variety $X$ can be obtained from \cite{MR2376848}.
\begin{proposition}\label{comparisonhodgestructure}
Let $f:X\lr Y$ be a proper morphism between algebraic varieties. Fix an algebraic stratification of $Y$ satisfying the above conditions, and assume that $f$ induces a trivial fibration on each stratum. Then, 
\begin{equation}\label{comparisonhodgestructure1}
[H_{(c)}^*(X)]=[IH_{(c)}^*(Y)]\cdot [H^*(F)]+ \sum_{V< Y^{\circ}}\widetilde{[IH_{(c)}^*(\overline{V})]}\cdot \left( [H^*(F_V)]-[H^*(F)]\cdot [IH^*(c^\circ L_{V,Y})] \right),
\end{equation}
where $\widetilde{[IH_{(c)}^*(\overline{V})]}$ is inductively defined by 
\begin{equation}\label{comparisonhodgestructure2}
\widetilde{[IH_{(c)}^*(\overline{V})]}:= [IH_{(c)}^*(\overline{V})]- \sum_{W<V} \widetilde{[IH_{(c)}^*(\overline{W})]}\cdot [IH^* (c^\circ L_{W,V})],
\end{equation}
and $F$ (resp. $F_V$) is the fiber over $Y^{\circ}$ (resp. $V\in \cV$).
\end{proposition}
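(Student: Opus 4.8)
The plan is to prove (\ref{comparisonhodgestructure1}) by a change-of-basis computation in the Grothendieck group $K_0(\mathrm{MHM}(Y))$ of mixed Hodge modules on $Y$, feeding in the hypotheses only through the structure of the proper pushforward $\cN:=Rf_*\QQ_X$. Since $f$ is proper we have $Rf_!\QQ_X\cong Rf_*\QQ_X$, so the hypercohomology map $\cN\mapsto[\HH_{(c)}^*(Y,\cN)]$, a homomorphism $K_0(\mathrm{MHM}(Y))\to K_0(\mathrm{HS})$, sends $[\cN]$ to the left-hand side $[H_{(c)}^*(X)]$ for both decorations at once. By proper base change together with the assumption that $f$ restricts to a trivial fibration over each stratum, $\cN$ is $\cV$-constructible and its stalk at a point of a stratum $V$ is the untwisted fiber cohomology, of class $[H^*(F_V)]$; in particular every local system that occurs is trivial. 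This already explains the decoration pattern on the right-hand side: the fiber and cone factors arise as \emph{stalks}, which are insensitive to compact supports, whereas the global intersection-cohomology classes carry the $(c)$ decoration.

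First I would expand $[\cN]$ in the intersection-complex basis. Because $K_0(\mathrm{MHM}(Y))$ is freely generated by the classes of the simple objects, which by the previous paragraph we may take to be the trivial-coefficient complexes $[IC_{\overline{V}}]$, there are classes $c_V\in K_0(\mathrm{HS})$ with $[\cN]=\sum_{V\in\cV} c_V\cdot[IC_{\overline{V}}]$. The coefficients are pinned down by the conical structure recorded in the statement: near a point of a stratum $V<W$ the transverse slice to $V$ in $\overline{W}$ is the open cone $c^\circ L_{V,W}$, so the stalk of $IC_{\overline{W}}$ along $V$ has class $[IH^*(c^\circ L_{V,W})]$, while the stalk of $IC_{\overline{V}}$ along its own open stratum is trivial. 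Matching the stalks of $\cN$ stratum by stratum then gives the upper-triangular, hence solvable, system
\begin{equation*}
[H^*(F_V)]=\sum_{W\geq V} c_W\cdot[IH^*(c^\circ L_{V,W})],\qquad c_{Y^{\circ}}=[H^*(F)].
\end{equation*}

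Next I would apply the hypercohomology homomorphism, using $[\HH_{(c)}^*(Y,IC_{\overline{W}})]=[IH_{(c)}^*(\overline{W})]$, to obtain $[H_{(c)}^*(X)]=\sum_W c_W\,[IH_{(c)}^*(\overline{W})]$. Substituting the recursion (\ref{comparisonhodgestructure2}) in its equivalent form $[IH_{(c)}^*(\overline{W})]=\sum_{U\leq W}\widetilde{[IH_{(c)}^*(\overline{U})]}\cdot[IH^*(c^\circ L_{U,W})]$ (valid since $c^\circ L_{U,U}$ is a point), interchanging the order of summation, and collapsing the inner sum $\sum_{W\geq U}c_W[IH^*(c^\circ L_{U,W})]$ back to $[H^*(F_U)]$ via the displayed system, yields the compact intermediate identity $[H_{(c)}^*(X)]=\sum_U\widetilde{[IH_{(c)}^*(\overline{U})]}\cdot[H^*(F_U)]$. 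Finally, isolating the open-stratum term $U=Y^{\circ}$ and rewriting $\widetilde{[IH_{(c)}^*(Y)]}=[IH_{(c)}^*(Y)]-\sum_{V<Y^{\circ}}\widetilde{[IH_{(c)}^*(\overline{V})]}\cdot[IH^*(c^\circ L_{V,Y})]$, which is the top instance of (\ref{comparisonhodgestructure2}), regroups the corrections over $V<Y^{\circ}$ into the factor $[H^*(F_V)]-[H^*(F)]\cdot[IH^*(c^\circ L_{V,Y})]$, reproducing (\ref{comparisonhodgestructure1}) exactly.

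The combinatorial regrouping in the last step is purely formal linear algebra over the poset $\cV$; the genuine difficulty lies in upgrading the two local inputs from topological to Hodge-theoretic statements. Concretely, one must check at the level of mixed Hodge modules that proper base change identifies the stalks of $Rf_*\QQ_X$ with the fiber classes $[H^*(F_V)]$ \emph{as mixed Hodge structures}, and that the cone neighborhood supplied by the Whitney stratification computes the stalk of $IC_{\overline{W}}$ as the canonical mixed Hodge structure on $IH^*(c^\circ L_{V,W})$. Keeping the shift convention $IC_{\overline{V}}[-\dim\overline{V}]$ consistent, so that the alternating signs in the classes $[IH^*]$ absorb the shifts, and tracking the compactly supported versus ordinary decorations through base change and duality, is the part that demands the most care and is where the mixed Hodge module formalism, rather than plain constructible sheaves, becomes indispensable.
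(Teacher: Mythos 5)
Your proof is correct, but it takes a more self-contained route than the paper does. The paper's entire proof is a citation: it applies Corollary 3.4 of \cite{MR2376848} with $M=\QQ_X^H$, which is precisely the decomposition of $[Rf_*\QQ_X^H]$ in $K_0(\mathrm{MHM}(Y))$ over the classes $[IC_{\overline{V}}]$ that you derive by hand, and then pushes forward along $k'\colon Y\lr \mathrm{pt}$ using $K_0(\mathrm{MHM}(\mathrm{pt}))$-linearity --- exactly your last two steps. In effect you have reproved the cited result in the special case at hand (trivial fibrations over strata, constant coefficients). What your version buys is transparency: one sees exactly where properness enters (base change identifying stalks of $Rf_*\QQ_X$ with $[H^*(F_V)]$), where the trivial-fibration hypothesis enters (trivial monodromy, so only constant-coefficient $IC$'s occur in the expansion), and where the Whitney-conical structure enters (the stalk of $IC_{\overline{W}}$ along $V$ being $[IH^*(c^\circ L_{V,W})]$, giving the upper-triangular system); the regrouping over the poset is then, as you say, formal. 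What the citation buys is precisely the two ``genuine difficulties'' you flag at the end: \cite{MR2376848}, built on Saito's theory, supplies the statement that these local identifications hold as identities of mixed Hodge structures, not merely of constructible sheaves, so the paper never has to address them. One step of your expansion deserves an extra sentence: the simple objects of $\mathrm{MHM}(Y)$ are $IC_{\overline{Z}}(L)$ for arbitrary simple polarizable variations $L$, so to write $[\cN]=\sum_V c_V\,[IC_{\overline{V}}]$ with $c_V\in K_0(\mathrm{HS})$ you need not only that the local systems appearing on strata are trivial, but also that a polarizable variation of Hodge structure with trivial monodromy on a stratum is constant (theorem of the fixed part); at the level of $K_0$ this is enough, since every class equals the signed sum of its pure weight-graded pieces.
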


\begin{proof}
Let $M=\QQ_X^H$ and apply $k'_*$ ($k'_!$ for compactly supported cases) to the identity of Corollary 3.4 in \cite{MR2376848}, where $k':Y\lr \mathrm{pt}$. Classes $[H^*(F)]$, $[H^*(F_V)]$, $[IH^*(c^\circ L_{V,Y})]$, and $[IH^* (c^\circ L_{W,V})]$ are in the Grothendieck group $K_0(MHM(pt))$ of mixed Hodge modules on a point. Since $k'_*$ ($k'_!$) is a $K_0(MHM(pt))$-linear map, we obtain the result.
\end{proof}

We drop subscript $c$ for compactly supported cohomologies in the remainder of this paper. However, $\rIE(c^\circ L_{A,B})$ of any open cone $c^\circ L_{A,B}$ always refers to the intersection cohomology, rather than the compactly supported intersection cohomology.

\begin{corollary}\label{mainprop}
Under assumptions of Proposition~\ref{comparisonhodgestructure}, 
\[
\rE_{}(X)=\rIE_{}(Y)\cdot \rE(F)+ \sum_{V< Y^{\circ}}\widetilde{\rIE_{}}(\overline{V})\cdot(\rE(F_V)-\rE(F)\cdot \rIE(c^\circ L_{V,Y})),
\]
where $\widetilde{\rIE_{}}(\overline{V})$ is inductively defined by $$\widetilde{\rIE_{}}(\overline{V}):= \rIE_{}(\overline{V})- \sum_{W<V} \widetilde{\rIE_{}}(\overline{W})\cdot \rIE (c^\circ L_{W,V}), $$
and $F$ (resp. $F_V$) is the fiber over $Y^{\circ}$ (resp. $V\in \cV$).
\end{corollary}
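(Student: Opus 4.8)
The plan is to deduce Corollary~\ref{mainprop} by applying the Hodge-Euler homomorphism $E_{Hdg}$ to the class-level identities already established in Proposition~\ref{comparisonhodgestructure}. The essential point is that, although $[IH_{(c)}^*]$ fails to be a homomorphism out of $K_0(var)$, the target map $E_{Hdg}\colon K_0(HS)\to\ZZ[u^\pm,v^\pm]$ is a genuine ring homomorphism; and since the identities (\ref{comparisonhodgestructure1}) and (\ref{comparisonhodgestructure2}) already live inside $K_0(MHM(pt))\cong K_0(HS)$, we may legitimately push them through $E_{Hdg}$ term by term. In particular we never invoke motivic additivity for intersection cohomology (which would be false); we only use that $E_{Hdg}$ respects the ring structure of $K_0(HS)$.

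First I would fix the dictionary between classes and polynomials dictated by the definitions $\rIE(\,\cdot\,)=E_{Hdg}\circ[IH_{(c)}^*]$ and the analogous formula $\rE(\,\cdot\,)=E_{Hdg}\circ[H_{(c)}^*]$: namely $E_{Hdg}([H_c^*(X)])=\rE(X)$, $E_{Hdg}([IH_c^*(Y)])=\rIE(Y)$, $E_{Hdg}([H^*(F)])=\rE(F)$, $E_{Hdg}([H^*(F_V)])=\rE(F_V)$, and $E_{Hdg}([IH^*(c^\circ L_{V,Y})])=\rIE(c^\circ L_{V,Y})$. Here I would be careful to respect the convention stated just before the corollary, namely that the subscript $c$ is suppressed throughout except that the open-cone terms always denote ordinary (not compactly supported) intersection cohomology; this matches the appearance of $IH^*$ rather than $IH_{(c)}^*$ on those terms in Proposition~\ref{comparisonhodgestructure}.

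Next, because $E_{Hdg}$ sends every product $[A]\cdot[B]$ to $E_{Hdg}([A])\cdot E_{Hdg}([B])$ and every sum to the corresponding sum, applying it directly to (\ref{comparisonhodgestructure1}) yields the displayed formula for $\rE(X)$. For the auxiliary term $\widetilde{\rIE}(\overline{V})$ I would argue by induction along the partial order on $\cV$. The base case is a minimal stratum $V$ with no $W<V$, where (\ref{comparisonhodgestructure2}) reduces to $\widetilde{[IH_{(c)}^*(\overline{V})]}=[IH_{(c)}^*(\overline{V})]$ and hence $\widetilde{\rIE}(\overline{V})=\rIE(\overline{V})$; for the inductive step, applying $E_{Hdg}$ to (\ref{comparisonhodgestructure2}) and using the homomorphism property together with the inductive hypothesis reproduces exactly the claimed recursion for $\widetilde{\rIE}(\overline{V})$. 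Since $\cV$ is finite and $<$ strictly decreases dimension, this induction is well-founded.

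The argument is purely formal, so there is no analytic or geometric obstacle; the only real work is bookkeeping. The point I expect to require the most care is the consistent tracking of the compactly-supported versus ordinary conventions across every term---in particular, confirming that the open-cone contributions are handled with ordinary intersection cohomology on both sides of the equality---together with the (standard) identification $K_0(MHM(pt))\cong K_0(HS)$ on which $E_{Hdg}$ is defined. Once these are pinned down, the corollary follows immediately.
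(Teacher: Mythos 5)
Your proposal is correct and follows essentially the same route as the paper: the paper's own proof consists precisely of applying the ring homomorphism $E_{Hdg}$ to the class-level identities \eqref{comparisonhodgestructure1} and \eqref{comparisonhodgestructure2} of Proposition~\ref{comparisonhodgestructure}. Your additional bookkeeping (the well-founded induction for the $\widetilde{\rIE}(\overline{V})$ terms and the care with the compactly-supported versus ordinary conventions on the open-cone terms) only makes explicit what the paper leaves implicit.
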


\begin{proof}
The identity of the claim is obtained by applying ring isomorphism $E_{Hdg}$ in \eqref{comparisonhodgestructure1} and \eqref{comparisonhodgestructure2} respectively.
\end{proof}

\begin{corollary}\label{comparisoneandie}
\[
\rE_{}(Y)=\rIE_{}(Y)+ \sum_{V< Y^{\circ}}\widetilde{\rIE_{}}(\overline{V})\cdot(1-\rIE(c^\circ L_{V,Y}))
\]
for any stratification $\cV$.
\end{corollary}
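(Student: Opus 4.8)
The plan is to derive Corollary~\ref{comparisoneandie} directly from Corollary~\ref{mainprop} by specializing the proper morphism $f$ to the identity map $\mathrm{id}_Y:Y\lr Y$. First I would observe that the identity map is trivially proper and that any algebraic Whitney stratification $\cV$ of $Y$ satisfies the hypotheses of Proposition~\ref{comparisonhodgestructure} for $f=\mathrm{id}_Y$, since $\mathrm{id}_Y$ is a stratified submersion and induces a trivial fibration on each stratum. Under this specialization $X=Y$, so the left-hand side $\rE(X)$ becomes $\rE(Y)$.

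The key computation is then to identify all the fibers. For the identity map, the fiber $F$ over the dense open stratum $Y^{\circ}$ is a single point, as is the fiber $F_V$ over each stratum $V$. Hence $\rE(F)=\rE(F_V)=\rE(\mathrm{pt})=1$. Substituting these values into the formula of Corollary~\ref{mainprop} collapses the main term $\rIE(Y)\cdot\rE(F)$ to $\rIE(Y)$, and collapses each summand $\widetilde{\rIE}(\overline{V})\cdot(\rE(F_V)-\rE(F)\cdot\rIE(c^\circ L_{V,Y}))$ to $\widetilde{\rIE}(\overline{V})\cdot(1-\rIE(c^\circ L_{V,Y}))$. This yields exactly the claimed identity
\[
\rE(Y)=\rIE(Y)+\sum_{V<Y^{\circ}}\widetilde{\rIE}(\overline{V})\cdot(1-\rIE(c^\circ L_{V,Y})).
\]
I note that the inductive definition of $\widetilde{\rIE}(\overline{V})$ in Corollary~\ref{mainprop} involves only intrinsic data of the stratification $\cV$ (the classes $\rIE(\overline{W})$ and the open-cone links $\rIE(c^\circ L_{W,V})$), none of which reference the map $f$; so this quantity is unchanged by the specialization and no further bookkeeping is needed.

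The only genuine point requiring care—and the one I expect to be the main (if modest) obstacle—is verifying that the identity map legitimately admits the required algebraic Whitney stratification compatible with the frontier condition and the trivial-fibration hypothesis, and that the fibers are correctly identified as points. This is where one must be slightly attentive: the statement claims validity "for any stratification $\cV$," so I would confirm that the construction of Proposition~\ref{comparisonhodgestructure} does not implicitly require $f$ to be non-trivial or $\dim X>\dim Y$, and that choosing $f=\mathrm{id}_Y$ does not degenerate any of the links $L_{W,V}$ or the open cones $c^\circ L_{W,V}$, which are defined purely in terms of $Y$ and the stratification rather than $f$. Once this compatibility is confirmed, the corollary follows immediately by substitution, with no remaining computation.
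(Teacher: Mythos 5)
Your proposal is correct and matches the paper's proof exactly: the paper also obtains the corollary by setting $X=Y$ and $f=\mathrm{id}$ in Corollary~\ref{mainprop}, whereupon the fibers $F$ and $F_V$ are points and the formula collapses as you describe. Your additional care about the stratification hypotheses for the identity map is sound but not an obstacle, since the links and open cones depend only on $Y$ and $\cV$.
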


\begin{proof}
The result follows immediately by letting $X=Y$ and $f=\mathrm{id}$ in Corollary~\ref{mainprop}.
\end{proof}

\begin{example}\label{exam1}
Let $X$ be a smooth projective variety. The symmetric product $\mathrm{Sym}^2(X)$ has the $\ZZ_2$-quotient singularity along the diagonal $\Delta(\cong X)\subset \mathrm{Sym}^2(X)$, and hence $\rE(\mathrm{Sym}^2(X))=\rIE(\mathrm{Sym}^2(X))$ from Proposition~\ref{rationalsmooth}. Applying Corollary~\ref{comparisoneandie} for the stratification $\cV=\{\Delta, V:=\mathrm{Sym}^2(X)\setminus \Delta\}$, 
\[
\rIE(c^\circ L_{\Delta, V})=1.
\]
\end{example}

\begin{proposition}\label{coneofquadric}
Let $Q:=\{xy-zw=0\}\subset \CC^4$ be the quadric cone in $\CC^4$. Then the IE-polynomial of the open cone of the link $Q$ at the origin $(0,0,0,0)\in \CC^4$ is given by 
\[
\rIE(c^\circ L_{\{0\},Q})=uv+1.
\]
\end{proposition}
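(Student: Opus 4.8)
The plan is to recognize $Q$ as the affine cone over a smooth projective quadric surface and then to compute the local intersection cohomology at its vertex, first as graded vector spaces and afterwards as a mixed Hodge structure.

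First I would record the geometry. The projectivization $\overline{Q}=\{xy-zw=0\}\subset\PP^3$ is a smooth quadric surface, hence isomorphic to $\PP^1\times\PP^1$ via the Segre embedding, with $\cO_{\PP^3}(1)|_{\overline{Q}}=\cO(1,1)$. Thus $Q\subset\CC^4$ is exactly the affine cone over $\overline{Q}$ with vertex the origin. In particular $Q$ is itself a cone, so by the conical structure a neighborhood of $0$ — and hence the open cone $c^\circ L_{\{0\},Q}$ — is identified with $Q$, and $\rIH^*(c^\circ L_{\{0\},Q})=\rIH^*(Q)$ is the ordinary (non-compactly supported) intersection cohomology computed at the vertex. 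The link $L=L_{\{0\},Q}$ is the smooth compact $5$-manifold given by the unit circle bundle of $\cO(-1,-1)$ over $\PP^1\times\PP^1$; being smooth, $\rIH^*(L)=\rH^*(L)$.

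Next I would pin down the Betti numbers. The middle-perversity cone formula gives, with $n=\dim_\CC Q=3$,
\[
\rIH^k(c^\circ L_{\{0\},Q})\cong\begin{cases}\rH^k(L)&k\le 2,\\ 0&k\ge 3.\end{cases}
\]
The cohomology $\rH^*(L)$ is computed from the Gysin sequence of the circle bundle $L\to\PP^1\times\PP^1$ with Euler class $\eta=h_1+h_2$, where $h_1,h_2$ are the hyperplane classes of the two factors:
\[
\cdots\to\rH^{k-2}(\overline{Q})\xrightarrow{\cup\,\eta}\rH^{k}(\overline{Q})\to\rH^{k}(L)\to\rH^{k-1}(\overline{Q})\xrightarrow{\cup\,\eta}\rH^{k+1}(\overline{Q})\to\cdots.
\]
Since $\cup\,\eta\colon\rH^0\to\rH^2$ is injective and $\rH^{\mathrm{odd}}(\overline{Q})=0$, one reads off $\rH^0(L)=\QQ$, $\rH^1(L)=0$, and $\rH^2(L)\cong\mathrm{coker}(\cup\,\eta\colon\rH^0(\overline{Q})\to\rH^2(\overline{Q}))$, which is one-dimensional. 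Hence $\rIH^0=\QQ$, $\rIH^1=0$, $\rIH^2=\QQ$, and all higher groups vanish.

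Finally I would determine the mixed Hodge structure, which is the only delicate point. The cokernel $\rH^2(\overline{Q})/\langle\eta\rangle$ is the primitive part of $\rH^2(\PP^1\times\PP^1)$, of pure type $(1,1)$; granting that the cone formula is compatible with the mixed Hodge structure carried by the stalk of $IC_Q$, this already forces $\rIH^2$ to contribute $uv$, while $\rIH^0$ contributes $1$. To make this step rigorous I would instead invoke the small resolution $\pi\colon\widetilde{Q}\to Q$ of the conifold, where $\widetilde{Q}=\mathrm{Tot}(\cO_{\PP^1}(-1)\oplus\cO_{\PP^1}(-1))$ and the fibre over the vertex is $\PP^1$. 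Since $\pi$ is small and $\widetilde{Q}$ is smooth, the decomposition theorem identifies $R\pi_*$ of the constant Hodge module with $IC_Q$, yielding an isomorphism of mixed Hodge structures $\rIH^*(Q)\cong\rH^*(\widetilde{Q})\cong\rH^*(\PP^1)$. As $\rH^*(\PP^1)$ is pure with classes of type $(0,0)$ and $(1,1)$, we conclude
\[
\rIE(c^\circ L_{\{0\},Q})=1+uv.
\]
The main obstacle is precisely this last step: the elementary topology (cone formula plus Gysin) yields the Betti numbers immediately, but pinning down the weight and Hodge type of $\rIH^2$ requires either the compatibility of the cone formula with Saito's mixed Hodge module structure or, more transparently, the small resolution, which delivers the Hodge structure directly from the smooth space $\widetilde{Q}$.
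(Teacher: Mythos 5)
Your proof is correct, but it follows a genuinely different route from the paper's. The paper never touches the affine cone directly: it compactifies to the projective quadric threefold $\bar{Q}=\{xy-zw=0\}\subset\PP^4$, computes $\rE(\bar{Q})$ from the blow-up of the vertex (exceptional divisor $\PP^1\times\PP^1$, the blown-up space being a $\PP^1$-bundle over it), computes $\rIE(\bar{Q})$ as $\rE(\tilde{Q}_2)$ for a small resolution $\tilde{Q}_2\to\bar{Q}$ with exceptional locus $\PP^1$, and then solves for $\rIE(c^\circ L_{\{0\},Q})$ by feeding these two polynomials into Corollary~\ref{mainprop} for the stratification $\{P,\,\bar{Q}\setminus P\}$; in this way every question about the mixed Hodge structure on the open cone stays inside the machinery of \cite{MR2376848} that the paper has already set up. You work locally instead: the middle-perversity cone formula and the Gysin sequence of $L\to\PP^1\times\PP^1$ give the Betti numbers, and the Hodge types come from the small resolution $\widetilde{Q}=\mathrm{Tot}(\cO_{\PP^1}(-1)\oplus\cO_{\PP^1}(-1))\to Q$ of the affine cone itself, via the decomposition theorem for small maps at the level of mixed Hodge modules and proper base change (stalk at the vertex $=\rH^*(\PP^1)$). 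Both arguments hinge on a small resolution, but they apply it to different spaces and lean on different supporting facts: yours needs the MHM-level smallness statement and the identification of the canonical mixed Hodge structure on the open cone with the stalk of the intersection Hodge module --- standard, but external to the paper's toolkit --- whereas the paper needs only that small resolutions of projective varieties compute $\rIH$, the rest being arithmetic with E-polynomials. Note also that once you invoke the small resolution, your cone-formula/Gysin computation becomes logically redundant, since $\rIH^*(Q)\cong\rH^*(\PP^1)$ already yields the Betti numbers; it survives as a sanity check. Conversely, your route identifies the answer conceptually as $\rH^*(\PP^1)$, the conifold fiber, and is independent of the paper's stratification identity, which is a genuine plus.
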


\begin{proof}
Let $\bar{Q}=\{xy-zw=0\} \subset \PP^4$ be the closure of $Q$ under standard embedding $\CC^4\subset \PP^4, (x,y,z,w)\mapsto [x:y:z:w:1]$. Let $P=[0:0:0:1]$ be the singular point of $\bar{Q}$. There are two resolutions $\tilde{Q}_1$ and $ \tilde{Q}_2$ for the singular point $P$. The exceptional divisor of the resolution $\tilde{Q}_1\lr \bar{Q}$ is $\PP^1\times \PP^1$ and $\tilde{Q}_1$ is a $\PP^1$-bundle over $\PP^1\times \PP^1$. Thus,
$$\rE(\bar{Q})=\rE(\tilde{Q}_1)-\rE(\PP^1\times\PP^1)+\rE(P)=(uv)^3+2(uv)^2+(uv)+1.$$

The resolution $\tilde{Q}_2\lr \bar{Q}$ is small with the  exceptional locus $\PP^1$, hence
$$\rIE(\bar{Q})=\rE(\tilde{Q}_2)=\rE(\bar{Q})-\rE(P)+\rE(\PP^1)=(uv)^3+2(uv)^2+2(uv)+1;$$
and from Corollary~\ref{mainprop},
$$\rIE(c^\circ L_{\{0\},Q})=uv+1.$$
\end{proof}

We use the notation $\rP(X):=\rE(X)(-1,-1)=P^{vir}_X(t)$ and $\rIP(X):=\rIE(X)(-1,-1)$, where $\rIP(c^\circ L_{A,B})$  comes from the usual intersection cohomology of  the open cone $c^\circ L_{A,B}$.

\section{Proof of main result}\label{sec:mainresults}

This section proves Theorem~\ref{mainthm1}. The intersection Poincar\'e polynomial of $\bM_n$ is exactly the same as the virtual one for $\bM_n$ because all spaces arising in the computation are pure and balanced Hodge types. We propose a numerical relationship between the virtual intersection Poincar\'e polynomials of open cones of singular loci in $\bM_n$.

\subsection{Intersection cohomology of $\bM_n$ using Kirwan's resolution} 

For a pure dimensional variety $X$, let us write the \emph{truncated} intersection Poincar\'e polynomial of $X$ as 
$$\rIP(X)_{<k}:=\sum_{i=0}^{k-1} \dim \rIH^i(X,\QQ)t^i.$$ 
\begin{lemma}\label{mainlemma}
The intersection Poincar\'e polynomial of the GIT-quotient space $\PP(\mathrm{Sym}^2\CC^2\otimes \CC^n)\git\mathrm{SL}(2)$ is
\[
\frac{(1-t^{2n})[(1-t^{2n+2})(1-t^{2n-2})-t^2(1-t^{4\lfloor\frac{n-1}{2}\rfloor})(1-t^{4\lfloor\frac{n}{2}\rfloor})]}{(1-t^2)^2(1-t^4)},
\]
where $\lfloor x\rfloor$ is the largest integer $\leq x$.
\end{lemma}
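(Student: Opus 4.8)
The plan is to compute $\rIP(\PP(\Sym^2\CC^2\otimes\CC^n)\git\SL_2)$ by combining Kirwan's partial desingularization procedure with the comparison formulas established in Section~\ref{sec:preliminaries}. The space in question is a projective GIT quotient of $\PP(\Sym^2\CC^2\otimes\CC^n)$ by $\SL_2$, where $\SL_2$ acts on $\Sym^2\CC^2$ (the standard $3$-dimensional irreducible representation) and trivially on $\CC^n$. First I would analyze the stability and identify the strictly semistable locus, which should consist of the orbits whose stabilizer is positive-dimensional; by the standard Hilbert--Mumford analysis for this $\SL_2$-action, these correspond to the rank-one or degenerate quadratic forms, whose stabilizers jump. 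This stratification is exactly the input required to run Kirwan's resolution (\cite{Kir86a, Kir85}).

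**Carrying out Kirwan's resolution.**

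Next I would perform the Kirwan blow-ups along the closed strata of the strictly semistable locus, in increasing order of the stabilizer dimension, exactly as in Propositions~\ref{bl1} and~\ref{bl2}. At each stage I would record the exceptional locus and the normal-slice data via Luna's \'etale slice theorem; the normal cone to each stratum is governed by the $\SL_2$-representation on the corresponding normal space, and the local model is a quotient $N\git\mathrm{Stab}$ of the form appearing in~\eqref{singulars}. The desingularized quotient $\widetilde{X}$ is then a space with at worst finite quotient singularities, so by Proposition~\ref{rationalsmooth} its intersection cohomology agrees with its ordinary cohomology, and its Poincar\'e polynomial can be computed by the equivariant Kirwan formula $\rP^{G}_{t}(X^{ss})$ minus the correction terms from the unstable strata. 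Because every stratum and every blow-up center here is a projective space, a Grassmannian, or a product thereof, all the building-block Poincar\'e polynomials are explicit Gaussian binomial expressions, and I expect the pure-and-balanced property of Remark~\ref{useful}(\ref{pureandbalanced}) to hold throughout, so that the virtual and genuine intersection Poincar\'e polynomials coincide.

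**Descending to the singular quotient.**

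Having computed $\rIP(\widetilde{X})$, I would then pass back down to $X=\PP(\Sym^2\CC^2\otimes\CC^n)\git\SL_2$ using the decomposition/comparison machinery of Corollary~\ref{mainprop} and Corollary~\ref{comparisoneandie}. The key inputs are the IE-polynomials of the open cones $\rIE(c^\circ L_{W,V})$ of the links of the singular strata, which I would compute from the explicit local quotient models; the quadric-cone computation of Proposition~\ref{coneofquadric} and the symmetric-square computation of Example~\ref{exam1} provide the relevant prototypes, since the local models here are precisely quotients by $\SL_2$ and $\CC^*$ of these normal representations. Subtracting the cone-correction contributions from $\rP(\widetilde{X})$ via Corollary~\ref{mainprop} yields $\rIP(X)$, which I would finally simplify into the closed form stated in the lemma, recognizing the factor $(1-t^{2n})/(1-t^2)$ as the Poincar\'e polynomial of the ambient $\PP^{n-1}$-direction and the bracketed expression as the $\SL_2$-reduction of the quadratic-form part.

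**The main obstacle.**

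The hard part will be the bookkeeping of the cone corrections: determining the precise links $L_{W,V}$ of the two nested singular strata and computing each $\rIE(c^\circ L_{W,V})$, including the subtraction of lower-stratum contributions in the inductive definition $\widetilde{\rIE}(\overline V)$ of Corollary~\ref{mainprop}. In particular I must verify that the floor functions $\lfloor\tfrac{n-1}{2}\rfloor$ and $\lfloor\tfrac{n}{2}\rfloor$ emerge correctly; these parities arise from the $\ZZ_2\subset\SL_2$ acting on $\Sym^2$ and from the weight-$(2,-2)$ torus action in~\eqref{singulars}, so I would track carefully how the invariant-ring gradings force the even-degree truncations that produce these floors. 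Confirming that all intermediate Hodge structures remain pure and balanced, so that the entire argument can be carried out at the level of virtual Poincar\'e polynomials rather than full mixed Hodge structures, is the technical linchpin of the whole computation.
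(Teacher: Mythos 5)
Your proposal has the same skeleton as the paper's proof (Kirwan desingularization of the quotient, then correction terms to pass from the resolution back to intersection cohomology), but as written it defers exactly the two computations that constitute the proof, and it lacks the concrete inputs that make them tractable. For the Poincar\'e polynomial of the resolution you propose running Kirwan's equivariant-Morse-theory machinery from scratch; the paper instead invokes Kiem's theorem (\cite[Theorem 4.1]{Kie07}) that the partial desingularization of $\PP(\Sym^2\CC^2\otimes \CC^n)\git\SL_2$ along the strictly semistable locus $\cong\PP^{n-1}$ \emph{is} the stable map space $\cM_0(\PP^{n-1},2)$, whose Poincar\'e polynomial then follows from the birational comparison with the Hilbert scheme of conics $\bH(\PP^{n-1})$ (a $\PP^5$-bundle over $\Gr(3,n)$). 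Without either executing the equivariant computation or citing this identification, you have no closed form for $\rP$ of the resolution. Note also that your template is off: here only \emph{one} Kirwan blow-up is needed, since the closed strictly semistable orbits are exactly those of $[xy\otimes v]$ with stabilizer $\CC^*\rtimes\ZZ_2$, and there is no $\SL_2$-stabilizer stratum ($\Sym^2\CC^2\otimes\CC^n$ contains no trivial subrepresentation); so ``exactly as in Propositions~\ref{bl1} and~\ref{bl2}'' (a two-step resolution) is not the right model.

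For the descent step, the paper applies Kirwan's formula \cite[(2.28)]{Kir86a} directly, with correction term $\rP(\PP^{n-1})\cdot\bigl(t^2Q(t)+t^{4n-8}Q(1/t)\bigr)$ where $Q(t)=\rIP(\mathrm{Sym}^2\PP^{n-2})_{<2n-4}$, and the floor functions in the statement come precisely from the closed formula for $\rIP(\mathrm{Sym}^2\PP^{n-2})$ in \cite[Lemma 2.6]{MOG09} and its truncation. Your alternative route via Corollaries~\ref{mainprop} and~\ref{comparisoneandie} is viable in principle, but the cone you must then compute is $\rIE$ of the actual Luna slice: $(\CC^{n-1}\oplus\CC^{n-1})\git\CC^*$ with weights $\pm2$, modulo the residual $\ZZ_2$ \emph{swapping} the two factors, i.e., the affine cone over the Segre variety $\PP^{n-2}\times\PP^{n-2}$ mod $\ZZ_2$ --- and here, unlike in \eqref{singulars}, the $\ZZ_2$ acts nontrivially. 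The prototypes you cite are only degenerate pieces of this: Proposition~\ref{coneofquadric} is the $n=3$ cone \emph{before} taking $\ZZ_2$-invariants, and Example~\ref{exam1} captures only the quotient aspect. The $\ZZ_2$-invariant, truncated cohomology of this cone is exactly what produces the parities $\lfloor n/2\rfloor$ and $\lfloor (n-1)/2\rfloor$; without carrying it out the stated formula is not reached, and carrying it out amounts to re-deriving Kirwan's (2.28) in this case. So the gap is concrete: both hard computations are acknowledged as ``the main obstacle'' but not performed, and the two inputs the paper uses to perform them (Kiem's identification with $\cM_0(\PP^{n-1},2)$, and the $\rIP(\mathrm{Sym}^2\PP^{n-2})$ formula) are absent from the proposal.
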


\begin{proof}
Since the blow-up of the stable maps space $\cM_{0}(\PP^{n-1}, 2)$ along a $\PP^2$-bundle over $\Gr(2,n)$ is isomorphic to the blow-up of the Hilbert scheme $\bH(\PP^{n-1})$ of conics along a $\PP^2$-bundle over $\Gr(3,n)$ (see \cite[Section 4]{Kie07} and \cite[Section 3]{CHK12} for an explicit geometric description),
\begin{equation}\label{moduliofstablemaps}
\rP(\cM_{0}(\PP^{n-1}, 2))=\rP(\bH(\PP^{n-1}))-\rP(\PP^2)\rP(\Gr(3,n))+\rP(\PP^2)\rP(\Gr(2,n)).
\end{equation}
Kirwan's partial desingularization of $\PP(\mathrm{Sym}^2\CC^2\otimes \CC^n)\git\mathrm{SL}(2)$ along the strictly semi-stable locus $\PP(\mathrm{Sym}^2\CC^2)\times \PP(\CC^n)\git\SL(2)\cong \PP^{n-1}$ is isomorphic to the moduli space $\cM_{0}(\PP^{n-1}, 2)$, where the exceptional locus is a $\mathrm{Sym}^2\PP^{n-2}$-fibration over $\PP^{n-1}$ (\cite[Theorem 4.1]{Kie07}). 
Since $\pi_1(\PP^{n-1})=0$, we can apply \cite[2.28]{Kir86a}, 
\begin{equation}\label{moduliofstablemaps1}
\rIP(\PP(\mathrm{Sym}^2\CC^2\otimes \CC^n)\git\mathrm{SL}(2))=\rP(\cM_{0}(\PP^{n-1}, 2))-\rP(\PP^{n-1})\cdot(t^2Q(t)+t^{4n-8}Q(\frac{1}{t}) ),
\end{equation}
where $Q(t):=\rIP(\mathrm{Sym}^2\PP^{n-2})_{\small{<2n-4}}$. Note that $\rP(\cM_{0}(\PP^{n-1}, 2))=\rIP(\cM_{0}(\PP^{n-1}, 2))$ from Proposition~\ref{rationalsmooth}. The Hilbert scheme $\bH(\PP^{n-1})$ is also isomorphic to a $\PP^5$-bundle over $\Gr(3,n)$ and $\rIP(\mathrm{Sym}^2\PP^{n-2})=\frac{1}{2}((\frac{1-t^{2n-2}}{1-t^2})^2+\frac{1-t^{4n-4}}{1-t^4})$ (\cite[Lemma 2.6]{MOG09}). 

Thus the result follows from \eqref{moduliofstablemaps} and \eqref{moduliofstablemaps1}.
\end{proof}

Mart\'in proved the following proposition using torus localization method (\cite[Theorem 3.1]{LM14}). We use an alternative birational geometric proof to confirm that the related moduli spaces are pure and balanced types.
\begin{proposition}\label{mainprop2}
The Poincar\'e polynomial of $\bK_n=\cM_0(\mathrm{Gr}(2,n+1),2)$ is
\[
        \frac{[(1+t^{2n+2})(1+t^6)-t^2(1+t^2)(t^4+t^{2n-2})]
        (1-t^{2n+2})(1-t^{2n})(1-t^{2n-2})}{(1-t^2)^3(1-t^4)^2}.
\]
\end{proposition}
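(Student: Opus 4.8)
The plan is to compute $\rP(\bK_n)$ motivically --- i.e. as the virtual Poincar\'e polynomial coming from $E_c$ --- by stratifying the moduli of degree-two genus-zero stable maps to $\Gr(2,n+1)$ according to the type of the image conic, and reading off each stratum as a locally trivial fibration over a homogeneous base, in the spirit of the proof of Lemma~\ref{mainlemma}. Because $\Gr(2,n+1)$ is convex, $\bK_n=\cM_0(\Gr(2,n+1),2)$ has at most finite quotient singularities, so $\rIH^*(\bK_n)=\rH^*(\bK_n)$ by Proposition~\ref{rationalsmooth}; and once every stratum is seen to be of pure and balanced (Hodge--Tate) type, $E_c(\bK_n)$ is a polynomial in $uv$ whose specialization $t^2:=uv$ is simultaneously the honest and the virtual Poincar\'e polynomial (Remark~\ref{useful}(\ref{pureandbalanced})). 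Working with $E_c$ and the motivic relations of Proposition~\ref{proepoly}(\ref{addition}),(\ref{productofE}) is exactly what makes this tracking of the mixed Hodge structure --- the genuine payoff over the localization proof --- automatic.

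To build the stratification I would analyze a stable map $f:\PP^1\to\Gr(2,V)$, $V=\CC^{n+1}$, through the pullback $f^*S$ of the tautological subbundle, a rank-two degree $-2$ bundle on $\PP^1$. There are two cases: the \emph{balanced} type $f^*S\cong\cO(-1)^{\oplus2}$, whose parameterized $2$-planes sweep out a subspace $W\subseteq V$ with $\dim W\le 4$; and the \emph{cone} type $f^*S\cong\cO\oplus\cO(-2)$, in which all the $2$-planes contain a common line $\langle v\rangle$. The open dense stratum consists of the balanced conics with $\dim W=4$; it fibers over $\Gr(4,n+1)$ with fiber the fixed, $9$-dimensional space of conics spanning a copy of the quadric fourfold $\Gr(2,4)$. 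The cone stratum is a $\cM_0(\PP^{n-1},2)$-bundle over $\PP^n$, its datum being a point $[v]\in\PP^n$ together with the projected conic in $\PP(V/\langle v\rangle)=\PP^{n-1}$, so here I reuse directly the virtual Poincar\'e polynomial of \eqref{moduliofstablemaps}. The remaining strata --- balanced conics with $\dim W=3$ (lying in a linear $\Gr(2,3)=\PP^2$), double covers of lines, and reducible conics --- are of smaller dimension and are likewise fibrations over flag varieties with conic-space fibers in fixed small targets.

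Next I would assemble the answer. By Proposition~\ref{proepoly}(\ref{productofE}) each stratum contributes $E_c(\mathrm{base})\cdot E_c(\mathrm{fiber})$, the bases being Grassmannians and $\PP^n$ with standard Hodge--Tate E-polynomials, and by Proposition~\ref{proepoly}(\ref{addition}) the total $E_c(\bK_n)$ is their sum. The only nontrivial fibers --- conics in $\Gr(2,4)$ and conics in $\PP^2$ --- I would evaluate by the same blow-up device as in Lemma~\ref{mainlemma}, relating them to the Hilbert scheme of conics in these fixed varieties and peeling off the two families of planes contained in $\Gr(2,4)$. Crucially, every building block is a smooth projective homogeneous space, a Grassmannian or projective bundle over one, or a blow-up of such along a smooth center of the same type; hence all cohomology is algebraic, $h^{p,q}_c=0$ for $p\neq q$, and the weights are pure. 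The stratification therefore produces a pure and balanced total, so the honest, virtual, and intersection Poincar\'e polynomials of $\bK_n$ all coincide, and the Poincar\'e polynomial is obtained from $E_c(\bK_n)$ by setting $t^2=uv$.

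The hard part will be the conic geometry of the Grassmannian: identifying the precise smooth centers that relate $\cM_0(\Gr(2,n+1),2)$ to the Hilbert scheme of conics (so that the double-cover and reducible strata are counted exactly once, as honest locally trivial fibrations), and computing the quadric-fourfold fiber together with its two rulings by planes. Once this bookkeeping is correct the remainder is purely algebraic; in particular, observing that the numerator bracket satisfies $(1+t^{2n+2})(1+t^6)-t^2(1+t^2)(t^4+t^{2n-2})=(1-t^8)(1-t^{2n})$ collapses the summed Grassmannian and projective E-polynomials into the stated closed form.
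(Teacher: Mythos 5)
Your route is genuinely different from the paper's. The paper does not stratify $\bK_n$ at all: it quotes two global birational structure results --- the forgetful map from the relative Hilbert scheme $\bH(\Gr(2,\cU ))$ over $\Gr(4,n+1)$ to $\bH(\Gr(2,n+1))$ is a blow-up along a $\PP^5$-bundle over $\Gr(3,n+1)$ (the method of \cite[Proposition 4.2]{CHL18}), and $\bK_n$ is obtained from $\bH(\Gr(2,n+1))$ by a blow-up followed by a blow-down along bundles over $\Gr(1,3,n+1)$ (\cite[Corollary 5.3]{CHK12}) --- and then applies the blow-up formula, the only fiberwise input being $\bH(\Gr(2,4))$ as a blow-up of $\Gr(3,6)$ along the two families of planes. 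Your motivic cut-and-paste would, if completed, replace those two cited theorems by set-theoretic decomposition plus Zariski-local triviality, which is a real gain in self-containedness; your purity logic (Hodge--Tate strata give $\rE(\bK_n)\in\ZZ[uv]$, projectivity plus finite quotient singularities give purity, hence honest $=$ virtual Poincar\'e polynomial) is sound, and your closing identity $(1+t^{2n+2})(1+t^6)-t^2(1+t^2)(t^4+t^{2n-2})=(1-t^8)(1-t^{2n})$ is correct.

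However, as written the proposal has two genuine gaps. First, your strata are not disjoint, so summing them as listed double counts. A degree-two cover of a line is automatically of cone type: every plane of a pencil $\{W:\, L\subset W\subset U\}$ contains the vertex line $L$; the same holds for reducible maps whose two pencils share their vertex line. Since your cone stratum is the \emph{entire closed} $\cM_0(\PP^{n-1},2)$-bundle over $\PP^n$, boundary included, all of these already lie in it and must not reappear as separate ``double-cover and reducible strata.'' The correct disjoint decomposition is: the cone stratum; the locus of maps into $\tau$-planes $\Gr(2,T)$, $T\in\Gr(3,n+1)$, \emph{minus} the maps onto lines (this intersection of the $\tau$-locus with the cone locus is a $\cM_0(\PP^1,2)\cong\PP^2$-bundle over $\Gr(1,3,n+1)$); and the full-span balanced locus over $\Gr(4,n+1)$, with the cone-type and $\tau$-plane maps inside $\Gr(2,4)$ removed by the same inclusion--exclusion. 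You flag this bookkeeping as ``the hard part,'' but it is the actual content of the proof, not an afterthought. Second, your open-stratum fiber --- full-span conics in $\Gr(2,4)$ --- is an open subset of $\cM_0(\Gr(2,4),2)$, i.e.\ of the $n=3$ instance of the very proposition being proved, so the stratification cannot produce it recursively; it needs an independent computation. It can be done: full-span balanced stable maps are embeddings (double covers and common-vertex reducible maps are cone type), so this open set sits inside $\bH(\Gr(2,4))$, hence inside the blow-up of $\Gr(3,6)$, and one removes the conics lying in planes contained in the quadric and the double lines cut by planes tangent along a line. Until this base case and the inclusion--exclusion above are carried out and summed, the stated formula has not actually been derived, so the proposal is a plausible program rather than a proof.
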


\begin{proof}
Let $\bH(\Gr(2,n+1))$ be the Hilbert scheme of conics in $\Gr(2,n+1)$. Let $\Gr(2,\cU)$ be the Grassmannian bundle over the universal sub-bundle $\cU$ of $\Gr(4,n+1)$. Consider the relative Hilbert scheme $\bH(\Gr(2,\cU))$ of conics over $\Gr(4,n+1)$. Following the method used to prove Proposition 4.2 in \cite{CHL18}, the natural forgetful map $\bH(\Gr(2,\cU))\lr \bH(\Gr(2,n+1))$ is a blow-up map along a $\PP^5$-bundle over $\Gr(3,n+1)$. 

On the other hand, the space $\bK_n$ is a blow-up of space $\bH(\Gr(2,n+1))$ along a $\PP^{n-2}$-bundle over $\Gr(1,3,n+1)$ followed by blowing-down along a $\PP^2$-bundle over $\Gr(1,3,n+1)$ (\cite[Corollary 5.3]{CHK12}). Therefore, 
\[
\rP(\bK_n)=\rP(\bH(\Gr(2,\cU)))-\rP(\PP^5) \rP(\Gr(3,n+1)) (\rP(\PP^{n-3})-1)+ \rP(\Gr(1,3,n+1)) (\rP(\PP^{2})-\rP(\PP^{n-2})),
\]
since $\bH(\Gr(2,4))$ is the blow-up of $\Gr(3,6)$ along two copies of disjoint $\PP^5$. Note that from Proposition \ref{proepoly} (3) and \cite[Lemma 2.1]{MOG09}, the relevant spaces are pure and balanced type.
\end{proof}

\begin{proof}[Proof of Theorem~\ref{mainthm1}]
Applying \cite[(2.28)]{Kir86a} to the first blow-up of Proposition~\ref{bl1}, 
\[
\rIP(\bM_n)=\rIP(\bM_n')-\rP(\PP^n)[t^2R(t)+t^{8n-6}R(\frac{1}{t})-ih^{3n-5}(\PP(\mathrm{Sym}^2\CC^2\otimes \CC^n)\git \mathrm{SL}(2))t^{4n-2}],
\]
where 
\[\begin{split} 
R(t)&=\rIP(\PP(\mathrm{Sym}^2\CC^2\otimes \CC^n)\git\mathrm{SL}(2))_{3n-4}\; \mathrm{and} \\
\;ih^{3n-5}(\PP(\mathrm{Sym}^2\CC^2\otimes& \CC^n)\git \mathrm{SL}(2))=\dim \rIH^{3n-5}(\PP(\mathrm{Sym}^2\CC^2\otimes \CC^n)\git \mathrm{SL}(2), \QQ).
\end{split}\]

Applying \cite[(2.1)]{Kir86a} to the second blow-up of Proposition~\ref{bl2} (\cite[Section 5]{Kir86b}, \cite[Lemma 2.6]{MOG09} and \cite[Remark 2.7]{LMN13}),
\[\begin{split}
\rIP(\bM_n')=\rP(\bK_n)&-\frac{1}{2}[\rP(\PP^n)^2+\frac{1-(-t^2)^{2n+2}}{1-t^4}+2\cdot\rP(\PP^n)\cdot\frac{t^{2n}-t^2}{t^2-1}]\cdot\sum_{j=1}^{2n-4}\lfloor\frac{\mathrm{min}\{j+1,2n-2-j\}}{2}\rfloor t^{2j}\\
&-\frac{1}{2}[\rP(\PP^n)^2-\frac{1-(-t^2)^{2n+2}}{1-t^4}]\cdot\sum_{j=2}^{2n-5}\lfloor\frac{\mathrm{min}\{j,2n-3-j\}}{2}\rfloor t^{2j},
\end{split}\]
where $\lfloor x\rfloor$ is the largest integer $\leq x$.

The proof follows by combining these two relation with Lemma~\ref{mainlemma} and Proposition~\ref{mainprop2}.
\end{proof}

\begin{remark}
$\bM_2\cong\bK_2(2,2)\cong \PP^5$ (\cite[Corollary 4.3]{LP93a}) and hence $\rIP(\bM_2)=1+t^2+t^4+t^6+t^8+t^{10}$, which is consistent with Theorem~\ref{mainthm1}.
\end{remark}

The following proposition is used subsequently in the paper. Recall that $\Delta:=\bY_0\git G=\PP^n$ and $S_n:=\overline{\bY}_1\git G\cong \mathrm{Sym}^2(\PP^n)$ are the blow-up centers of $\bM_n$.
\begin{corollary}\label{univrelation}
For the stratification $\cV=\{\Delta, S_n\setminus \Delta, \bM_n\setminus S_n\}$ of $\bM_n$, the intersection Poincar\'e polynomials of open cones are related by
\begin{equation}\label{relation}\begin{split}
(t^2+1)&\rIP(c^\circ L_{\Delta,\bM_n})+t^4\frac{1-t^{2n}}{1-t^2}\rIP(c^\circ L_{S_n\setminus \Delta,\bM_n})\\
&=\frac{(t^{2n-2}-1)(2t^{2n+4}-t^4-1)+\frac{(-1)^n+1}{2}\cdot(t^{2n-2}+t^{4n})(1-t^2)^2}{(t^2-1)(t^4-1)}.
\end{split}\end{equation}
\end{corollary}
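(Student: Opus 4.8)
The plan is to apply Corollary~\ref{comparisoneandie} to the proper birational morphism $\overline{\pi}_1 \circ \overline{\pi}_2 : \bK_n \to \bM_n$ (equivalently to work directly with the stratified identity for $\bM_n$ itself) using the given stratification $\cV = \{\Delta,\, S_n \setminus \Delta,\, \bM_n \setminus S_n\}$, and then to read off the stated relation by matching E-polynomial data on both sides. Concretely, I would write down
\[
\rE(\bM_n) = \rIE(\bM_n) + \widetilde{\rIE}(\Delta)\cdot\bigl(1 - \rIE(c^\circ L_{\Delta,\bM_n})\bigr) + \widetilde{\rIE}(\overline{S_n})\cdot\bigl(1 - \rIE(c^\circ L_{S_n\setminus\Delta,\bM_n})\bigr),
\]
where $\Delta < S_n < \bM_n^\circ$ in the partial order. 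The inductive definition gives $\widetilde{\rIE}(\Delta) = \rIE(\Delta) = \rIE(\PP^n)$ (since $\Delta$ is minimal), while $\widetilde{\rIE}(\overline{S_n}) = \rIE(\overline{S_n}) - \rIE(\Delta)\cdot\rIE(c^\circ L_{\Delta,S_n})$. Here $\overline{S_n} \cong \mathrm{Sym}^2(\PP^n)$, whose singular locus is exactly the diagonal $\Delta$ with $\ZZ_2$-quotient singularities, so Example~\ref{exam1} gives $\rIE(c^\circ L_{\Delta,S_n}) = 1$, whence $\widetilde{\rIE}(\overline{S_n}) = \rIE(\mathrm{Sym}^2(\PP^n)) - \rIE(\PP^n)$.

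The next step is to evaluate each ingredient explicitly as a polynomial in $uv$. The spaces $\PP^n$ and $\mathrm{Sym}^2(\PP^n)$ are all of pure balanced Hodge type, so I can freely substitute $t^2 := uv$ and work with $\rIP$ throughout, passing to the specialization $u=v$ via Remark~\ref{useful}(5). I would use $\rIP(\PP^n) = \frac{1-t^{2n+2}}{1-t^2}$ and the formula $\rIE(\mathrm{Sym}^2 \PP^n) = \frac12\bigl(\bigl(\frac{1-t^{2n+2}}{1-t^2}\bigr)^2 + \frac{1-t^{4n+4}}{1-t^4}\bigr)$ already quoted from \cite[Lemma 2.6]{MOG09}. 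The quantity $\rE(\bM_n) - \rIE(\bM_n)$ on the left of the stratified identity is computable because I separately know $\rIP(\bM_n)$ from Theorem~\ref{mainthm1} and $\rP(\bM_n) = \rE(\bM_n)(-1,-1)$ from the resolution data in Theorem~\ref{desingularization}: using Remark~\ref{useful}(3) on the blow-downs $\overline{\pi}_1$ and $\overline{\pi}_2$ with their explicit exceptional loci (a $\PP(H\otimes \mathfrak{sl}_2)\git\SL_2$-fibration and a $\PP^{n-2}\times\PP^{n-2}$-fibration) and the known $\rP(\bK_n)$ from Proposition~\ref{mainprop2} yields $\rP(\bM_n)$. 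Substituting all of these into the stratified identity and solving for the combination of cone polynomials produces the displayed relation, after recognizing that the coefficient of $\rIP(c^\circ L_{S_n\setminus\Delta,\bM_n})$ is $\widetilde{\rIP}(\overline{S_n})$ evaluated as $\rIP(\mathrm{Sym}^2\PP^n)-\rIP(\PP^n)$, which simplifies to $t^4\frac{1-t^{2n}}{1-t^2}$ after the $\mathrm{Sym}^2$ formula is expanded, and the coefficient of $\rIP(c^\circ L_{\Delta,\bM_n})$ collapses to $t^2+1$.

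I expect the main obstacle to be the bookkeeping in the final algebraic simplification, specifically confirming that $\widetilde{\rIP}(\overline{S_n}) = t^4\frac{1-t^{2n}}{1-t^2}$ and that the $\PP^n$-coefficient reduces cleanly to $t^2+1$ rather than to the full $\rIP(\PP^n)$. The subtlety is that the coefficient appearing in the identity need not literally equal $\widetilde{\rIP}(\overline V)$ as written in Corollary~\ref{comparisoneandie}; the factors $t^2+1$ and $t^4\frac{1-t^{2n}}{1-t^2}$ arise only after combining the $(1-\rIP(c^\circ L))$ factors and regrouping, so I must be careful that the normalization of links and the convention that $\rIP(c^\circ L_{A,B})$ refers to ordinary (not compactly supported) intersection cohomology are applied consistently. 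A secondary point requiring care is verifying that all strata and their closures are indeed pure and balanced, so that the passage between $\rIE$ and $\rIP$ by setting $t^2 = uv$ is legitimate term by term; this is where Remark~\ref{useful}(5) and the explicit fibration structures from Propositions~\ref{bl1} and~\ref{bl2} must be invoked. Once purity is secured and $\rP(\bM_n)$ is pinned down from the resolution tower, the identity follows by equating the two expressions for $\rP(\bM_n) - \rIP(\bM_n)$.
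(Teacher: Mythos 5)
Your outline coincides with the paper's own proof: apply Corollary~\ref{comparisoneandie} to $\bM_n$ with the stratification $\cV=\{\Delta, S_n\setminus\Delta,\bM_n\setminus S_n\}$, use Example~\ref{exam1} to get $\widetilde{\rIE}(S_n)=\rIE(\Sym^2\PP^n)-\rIE(\PP^n)$ and $\widetilde{\rIE}(\Delta)=\rIE(\PP^n)$, insert $\rIP(\bM_n)$ from Theorem~\ref{mainthm1}, and solve for the two cone terms; the one remaining input is $\rP(\bM_n)$. This is exactly where your proposal has a genuine gap. The paper takes $\rP(\bM_n)$ from Proposition 7.2 of \cite{CM17} \emph{after correcting it} (see the footnote in the paper's proof), whereas you propose to recover $\rP(\bM_n)$ from the tower $\bK_n\to\bM_n'\to\bM_n$ by treating the exceptional loci as fiber-times-base products. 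Over $S_n\setminus\Delta\cong(\PP^n\times\PP^n\setminus\Delta)/\ZZ_2$ this fails: the exceptional fibration has fiber $\PP^{n-2}\times\PP^{n-2}$, but the $\ZZ_2$-monodromy interchanges the two factors, so the fibration is not Zariski-locally trivial, and the \'etale clause of Proposition~\ref{proepoly}(3) does not apply since the fiber is not a Grassmannian. The correct contribution of this stratum is $\rP(\Sym^2(\PP^n\times\PP^{n-2}))-\rP(\PP^n)\rP(\Sym^2\PP^{n-2})-\bigl(\rP(\Sym^2\PP^n)-\rP(\PP^n)\bigr)$, not $\bigl(\rP(\PP^{n-2}\times\PP^{n-2})-1\bigr)\bigl(\rP(\Sym^2\PP^n)-\rP(\PP^n)\bigr)$; the difference between these two expressions is precisely the error in (7.1) of \cite{CM17} that the paper's footnote corrects. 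As written, your computation would reproduce the uncorrected value of $\rP(\bM_n)$ and hence a wrong right-hand side in \eqref{relation}.

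A second, smaller gap is the coefficient bookkeeping, which you flag but then resolve incorrectly. One has
\[
\widetilde{\rIP}(S_n)=\rIP(\Sym^2\PP^n)-\rIP(\PP^n)=t^4\,\frac{(1-t^{2n})(1-t^{2n+2})}{(1-t^2)(1-t^4)},
\]
not $t^4\frac{1-t^{2n}}{1-t^2}$ (already for $n=2$: $\rIP(\Sym^2\PP^2)-\rIP(\PP^2)=t^4+t^6+t^8$, while $t^4\frac{1-t^4}{1-t^2}=t^4+t^6$), and likewise $\widetilde{\rIP}(\Delta)=\rIP(\PP^n)\neq t^2+1$. The point is not that these quantities simplify individually, but that both contain the common factor $\frac{1-t^{2n+2}}{1-t^4}$: indeed $\rIP(\PP^n)=\frac{1-t^{2n+2}}{1-t^4}\cdot(1+t^2)$ and $\widetilde{\rIP}(S_n)=\frac{1-t^{2n+2}}{1-t^4}\cdot t^4\frac{1-t^{2n}}{1-t^2}$, and the displayed relation \eqref{relation} is the stratified identity divided through by this common factor. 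Once you make that division explicit and feed in the \emph{corrected} $\rP(\bM_n)$, the argument closes; without both repairs, the identity you would derive differs from \eqref{relation} both in its overall normalization and in its right-hand side.
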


\begin{proof}
The result follows from Corollary~\ref{comparisoneandie}, Theorem~\ref{mainthm1}, Example~\ref{exam1}, and Proposition 7.2 of \cite{CM17}\footnote{There is a small error on page 648 of \cite{CM17}. In (7.1), the term $ \left(P((\PP^{n-2})^{2})-1\right)
        \left(\frac{1}{2}\left(P(\PP^{n})^{2}+\frac{1-q^{2n+2}}{1-q^{2}}
        \right)-P(\PP^{n})\right)$ must be replaced by $\rP(\Sym^2(\PP^n\times \PP^{n-2}))-\rP(\PP^n)\cdot\rP(\Sym^2\PP^{n-2})-(\rP(\Sym^2\PP^n)-\rP(\PP^n))$ (\eqref{singulars} and \cite[Proposition 2.6]{LMN13}).}.
\end{proof}

\begin{example}
When $n=3$, the intersection cohomology for each open cone can be calculated using \eqref{relation} and Proposition~\ref{coneofquadric}. From Luna's slice theorem, the space $\bM_n$ at $[E]=[\cO_H\oplus \cO_{H'}]$, $H\neq H'$ is locally isomorphic to
\[
\Ext^1(E,E)\git \mathrm{Aut}(E) \cong (\Ext^1(\cO_H,\cO_H)\oplus \Ext^1(\cO_{H'},\cO_{H'})) \times Y\subset (\CC^n\oplus \CC^n)\times \CC^{(n-1)^2},
\]
where $S_n$ at $[E]$ corresponds to the affine space $(\Ext^1(\cO_H,\cO_H)\oplus \Ext^1(\cO_{H'},\cO_{H'}))\times\{0\}$ at the origin $\{(0\oplus0)\times\{0\}\}$, and $Y$ is isomorphic to the affine cone of the Segre variety $\PP^{n-2}\times \PP^{n-2}\subset \PP^{n^2-2n}$ (\eqref{singulars} and \cite[Proposition 7.16]{Dre04}). 

For the case $n=3$, let us choose the normal slice $N=\{(0\oplus 0)\}\times \CC^{4}$ at $[E]$. Then,
\[
c^\circ L_{S_3, \bM_3}\cong c^\circ L_{\{0\},\;Q}.
\]
Substituting $\rIP(c^\circ L_{S_3,\;\bM_3})=1+t^2$ into \eqref{relation}, $\rIP(c^\circ L_{\Delta,\;\bM_3})=1$.
\end{example}

\section{Application to local surfaces}\label{ihdoflocalsurface}

This section calculates the intersection Poincar\'e polynomial of the moduli space of pure one-dimensional sheaves on del Pezzo surfaces ($\FF_0$, $\mathbb{F}_1$, and $\PP^2$) using the intersection Poincar\'e polynomial of the space $\bM_n$ ($3\leq n\leq 5$). 

Recall that $\bM_S(c, \chi)$ is the moduli space of semi-stable sheaves $F$ with $c_1(F)=c$ and $\chi(F)=\chi$ on a del Pezzo surface $S$. From the Serre duality and the semi-stability of $F$, $\Ext_S^2(F,F)\cong \Ext_S^0(F,F\otimes K_S)=0$, and hence the Quot scheme arising in the GIT-construction of $\bM_S(c, \chi)$ is smooth. Therefore, the analytic neighborhood of the singular locus in $\bM_S(c, \chi)$ is isomorphic to that of vector bundles case (cf. Remark~\ref{luna}).

For $n=3$ and $5$, we can use the explicit birational maps between spaces $\bM_S(c, \chi)$ and $\bM_n$ (see \cite[Theorem 5.7]{CM16} and \cite[Proposition 7.4]{CM17}). However, we use Corollary~\ref{univrelation} for $n=4$ since we do not know any explicit birational relation between $\bM_S(c, \chi)$ and $\bM_n$. Lastly, we conjecture that the intersection Poincar\'e polynomial of the moduli space $\bM_S(c,\chi)$ does not depend on the Euler characteristic $\chi$.

\subsection{Cases $n=3$ and $5$}

Let $\bM_{\FF_0}((2,2),2)=\bM_{\cO_{\FF_0}(1,1)}(\FF_0, (2,2), 4m+2)$ be the moduli space of semi-stable sheaves $F$ with $c_1(F)=(2,2)\in \rH_2(\FF_0,\ZZ)$ with Hilbert polynomial $P(F)(m)=4m+2$.
\begin{corollary}\label{cor1}
Let $\bM_{\FF_0}((2,2),2)$ be the moduli space of pure sheaves on $\FF_0$. Then the intersection Poincar\'e polynomial of $\bM_{\FF_0}((2,2),2)$ is 
\[
1+3t^2+4t^4+4t^6+4t^8+4t^{10}+4t^{12}+4t^{14}+3t^{16}+t^{18}.
\]
\end{corollary}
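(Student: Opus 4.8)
The plan is to reduce the computation to two inputs: the ordinary E-polynomial $\rE(\bM_{\FF_0}((2,2),2))$, obtained from the explicit birational geometry relating this space to $\bM_3$, and the local singularity data of $\bM_3$, which I transport to $\bM_{\FF_0}((2,2),2)$ through the comparison formula of Corollary~\ref{comparisoneandie}. Since $\dim\bM_{\FF_0}((2,2),2)=9=4\cdot 3-3=\dim\bM_3$, the surface case $\FF_0$ corresponds to $n=3$. First I would invoke the explicit birational morphisms of \cite[Theorem 5.7]{CM16} and \cite[Proposition 7.4]{CM17}, which realize $\bM_{\FF_0}((2,2),2)$ and $\bM_3$ as the two ends of a chain of blow-ups and blow-downs along smooth centers (bundles over Grassmannians and projective spaces produced by the wall-crossings of stable pairs and complexes). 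Applying the motivic additivity and multiplicativity of the E-polynomial (Proposition~\ref{proepoly} and Remark~\ref{useful}) to this chain, together with the known $\rE(\bM_3)=\rP(\bM_3)$ from the $n=3$ instance of the Kirwan computation (equivalently \cite[Proposition 7.2]{CM17}), yields $\rE(\bM_{\FF_0}((2,2),2))$. Throughout I would record that every space and center occurring is of pure and balanced Hodge type, so that the E-polynomial, the virtual Poincar\'e polynomial, and the Poincar\'e polynomial agree after the substitution $t^2=uv$.

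Next I would pin down the singularity type. The strictly semistable locus of $\bM_{\FF_0}((2,2),2)$ consists of the $S$-equivalence classes of decomposable sheaves $F_1\oplus F_2$ with $F_i$ stable of class $(1,1)$ and $\chi(F_i)=1$; since each genus-zero $(1,1)$-curve of $|(1,1)|\cong\PP^3$ carries a unique such sheaf, the moduli of these $F_i$ is $\bM_{\FF_0}((1,1),1)\cong\PP^3$, so the singular locus is $\Sym^2(\PP^3)$ with deepest stratum the diagonal $\Delta\cong\PP^3$, exactly as for $\bM_3$. By the smoothness of the relevant Quot scheme ($\Ext^2_S(F,F)=0$, opening of Section~\ref{ihdoflocalsurface}) and Luna's slice theorem, the analytic neighborhood of $\mathrm{Sing}(\bM_{\FF_0}((2,2),2))$ agrees with that of the vector-bundle moduli, hence with that of $\bM_3$. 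Consequently the open cones of the links are the ones computed in the $n=3$ Example, namely $\rIE(c^{\circ}L_{\Delta,\bM_{\FF_0}})=1$ and $\rIE(c^{\circ}L_{S\setminus\Delta,\bM_{\FF_0}})=1+uv$, while $\rIE(c^{\circ}L_{\Delta,S})=1$ inside $S=\Sym^2(\PP^3)$ by Example~\ref{exam1}.

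Finally, I would feed these data into Corollary~\ref{comparisoneandie} for the stratification $\cV=\{\Delta,\;S\setminus\Delta,\;\bM_{\FF_0}\setminus S\}$. The $\Delta$-contribution vanishes because $1-\rIE(c^{\circ}L_{\Delta,\bM_{\FF_0}})=0$, and the remaining term, with $\widetilde{\rIE}(\overline{S\setminus\Delta})=\rIE(\Sym^2\PP^3)-\rIE(\PP^3)$, gives
\[
\rIE(\bM_{\FF_0}((2,2),2))=\rE(\bM_{\FF_0}((2,2),2))+uv\bigl(\rIE(\Sym^2\PP^3)-\rIE(\PP^3)\bigr).
\]
Since $\Sym^2\PP^3$ has only quotient singularities, $\rIE(\Sym^2\PP^3)=\rE(\Sym^2\PP^3)$ is given by \cite[Lemma 2.6]{MOG09} (Proposition~\ref{rationalsmooth}) and $\rIE(\PP^3)=\rE(\PP^3)$ is standard; substituting $t^2=uv$ and simplifying produces the asserted polynomial. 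The main obstacle is the first step: correctly identifying the centers of the birational chain from \cite{CM16,CM17} and assembling their E-polynomials, and---crucially---checking at every stage that the spaces are pure and balanced so that the identity $\rIP=\rP^{vir}=\rIE|_{t^2=uv}$ holds. As a consistency check, the palindromicity of the resulting polynomial (Poincar\'e duality for intersection cohomology) pins down the outcome, since the non-palindromic $\rP(\bM_{\FF_0})$ becomes symmetric only after adding the correction $uv(\rIE(\Sym^2\PP^3)-\rIE(\PP^3))$.
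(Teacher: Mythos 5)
Your overall strategy---compute the ordinary E-polynomial of $\bM_{\FF_0}((2,2),2)$ and then correct it by the link-cone terms of Corollary~\ref{comparisoneandie}, transporting the local singularity data from $\bM_3$ via Luna slices---is sound, and it is in fact the very argument the paper deploys for the $\FF_1$ case (Corollaries~\ref{cor2} and~\ref{cor3}), where no explicit birational morphism is known. With correct inputs your final formula $\rIE(\bM_{\FF_0}((2,2),2))=\rE(\bM_{\FF_0}((2,2),2))+uv\bigl(\rIE(\Sym^2\PP^3)-\rIE(\PP^3)\bigr)$ does reproduce the stated polynomial. However, there is a genuine gap at exactly the step you flag as the main obstacle. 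The content of \cite[Theorem 5.7]{CM16} is not a chain of flips whose centers are bundles over Grassmannians produced by wall-crossings (that is the $\PP^2$ and $\FF_1$ picture); it is a single birational morphism $\bM_{\FF_0}((2,2),2)\to\bM_3$ which is the blow-up at \emph{two distinct smooth points}. Your first step is left unexecuted, and the geometry you describe for it is the wrong one; running your plan with a flip-chain ansatz would produce an incorrect $\rE(\bM_{\FF_0}((2,2),2))$, and hence an incorrect final answer. Two smaller issues: your identification of the strictly semistable locus as $\Sym^2\PP^3$ tacitly rules out polystable sheaves of type $(2,0)\oplus(0,2)$ without argument (this does hold, and also follows once one knows the blow-up is an isomorphism near the singular locus); and $\rE(\bM_3)$ is not an output of ``the Kirwan computation''---Kirwan's method gives $\rIP(\bM_3)$ (Theorem~\ref{mainthm1}), whereas the virtual polynomial $\rP(\bM_3)$ comes from \cite[Proposition 7.2]{CM17}, with the correction recorded in the paper's footnote; these two differ precisely by the cone terms you later add back.

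Once the correct input from \cite{CM16} is in place, your detour collapses onto the paper's proof and becomes redundant: since the two blow-up centers are smooth points, the morphism is an isomorphism in an analytic neighborhood of the singular locus, so the cone corrections for $\bM_{\FF_0}((2,2),2)$ and for $\bM_3$ are literally the same and cancel against each other in your computation. What remains is the paper's two-line argument: by the blow-up formula for intersection cohomology applied at smooth points,
\[
\rIP(\bM_{\FF_0}((2,2),2))=\rIP(\bM_3)+2\cdot\bigl(\rP(\PP^{8})-1\bigr),
\]
and substituting $\rIP(\bM_3)=\frac{(1-t^{16})(1-t^{8})}{(1-t^2)(1-t^4)}$ from Theorem~\ref{mainthm1} gives the claim. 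The comparative value of your route is that it does not need the birational map to be so simple (only the identification of the singular strata and their links), which is why it generalizes to $\FF_1$; its cost is that it requires the corrected $\rE(\bM_3)$ and the $n=3$ cone values, both of which are themselves downstream of Theorem~\ref{mainthm1} and Corollary~\ref{univrelation}, so nothing is gained here. Your palindromicity check is a legitimate and useful sanity test.
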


\begin{proof}
From Theorem 5.7 of \cite{CM16}, there exists a birational morphism $$\bM_{\FF_0}((2,2),2)\longrightarrow \bM_3$$ that is a smooth blow-up at two distinct smooth points. From the blow-up formula for cohomology groups, 
\[
\rIP(\bM_{\FF_0}((2,2),2))=\rIP(\bM_3)+2\cdot\rP(\{\mathrm{pt}\})(\rP(\PP^{\dim \bM_3-1})-1),
\]
and $\rP(\PP^n)=\frac{1-t^{2n+2}}{1-t^2}$ the result follows from Theorem~\ref{mainthm1}.
\end{proof}

\begin{remark}\label{imrem1}
The space $\bM_{\FF_0}((2,2),1)$ is isomorphic to the relative Hilbert scheme of one point over the complete linear system $|\cO_{\FF_0}(2,2)|$ (\cite[Proposition 12]{BH14}), where the latter space is isomorphic to a $\PP^7$-bundle over $\FF_0$.
Thus, from Proposition~\ref{proepoly}, $\rIE(\bM_{\FF_0}((2,2),1))=\rIE(\PP^7\times \PP^1 \times \PP^1)$, which is $\rIE(\bM_{\FF_0}((2,2),2))$.
\end{remark}

\begin{corollary}\label{cor3}
Let $\bM_{\PP^2}(4,2)$ be the moduli space of semi-stable sheaves on $\PP^2$ with the Hilbert polynomial $4m+2$. Then the intersection Poincar\'e polynomial of $\bM_{\PP^2}(4,2)$ is 
\[\begin{split}
&1+2t^2+6t^4+10t^6+14t^8+15t^{10}+16t^{12}+16t^{14}+16t^{16} \\
&+16t^{18}+16t^{20}+16t^{22}+15t^{24}+14t^{26}+10t^{28}+6t^{30}+2t^{32}+t^{34}.
\end{split}\]
\end{corollary}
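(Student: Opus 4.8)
The plan is to mirror the proof of Corollary~\ref{cor1}, with the two-point blow-up replaced by the richer birational correspondence of \cite[Proposition 7.4]{CM17}. Both $\bM_{\PP^2}(4,2)$ and $\bM_5$ have dimension $17$, so the top cohomological degree $34$ matches the claimed polynomial; moreover, by Remark~\ref{luna} the two moduli spaces carry the same analytic singularity type along their strictly semistable loci, and the difference $\rE-\rIE$ is governed by the common links via Corollary~\ref{comparisoneandie}. First I would invoke \cite[Proposition 7.4]{CM17} to present the birational identification of $\bM_{\PP^2}(4,2)$ with $\bM_5$ as an explicit finite chain of smooth blow-ups and blow-downs, recording at each stage the center $Z$---a smooth projective variety, typically a projective or Grassmannian bundle---together with its codimension $c$.

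The computational engine is the blow-up formula for intersection cohomology. When a center $Z$ lies in the rationally smooth locus, Proposition~\ref{rationalsmooth} identifies $\rIH^*$ with $\rH^*$ nearby, so blowing up along $Z$ contributes $\rP(Z)\,(\rP(\PP^{c-1})-1)=\rP(Z)\,(t^2+t^4+\cdots+t^{2(c-1)})$ to the intersection Poincar\'e polynomial---exactly as the term $\rP(\{\mathrm{pt}\})(\rP(\PP^{\dim\bM_3-1})-1)$ does in Corollary~\ref{cor1}---while a blow-down along $Z'$ subtracts the analogous expression. Summing along the chain gives
\[
\rIP(\bM_{\PP^2}(4,2))=\rIP(\bM_5)+\sum_{\text{up}}\rP(Z)(t^2+\cdots+t^{2(c-1)})-\sum_{\text{down}}\rP(Z')(t^2+\cdots+t^{2(c'-1)}).
\]
Inserting the value
\[
\rIP(\bM_5)=\frac{(1-t^{24})(1-t^{8})(1-t^{12})}{(1-t^2)(1-t^4)^2}
\]
from Theorem~\ref{mainthm1} and expanding the correction terms should then yield the stated degree-$34$ polynomial, which one may cross-check against the known Betti numbers of the smooth space $\bM_{\PP^2}(4,1)$ in light of the Remark following Corollary~\ref{cormain}.

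Two points must be verified before this additive bookkeeping is legitimate. First, every center and every intermediate space must be of pure and balanced Hodge type, so that its virtual Poincar\'e polynomial $\rP$ agrees with the honest one (Remark~\ref{useful}(\ref{pureandbalanced})); this follows from Proposition~\ref{proepoly}(\ref{productofE}) together with \cite[Lemma 2.1]{MOG09}, as in Proposition~\ref{mainprop2}. Second, each center must lie in the rationally smooth (indeed finite-quotient) locus, so that Proposition~\ref{rationalsmooth} applies and the ordinary blow-up formula genuinely computes intersection cohomology. The main obstacle I anticipate is this second point: unlike the $\FF_0$ case, where the centers are two smooth points safely inside the stable locus, the $\PP^2$ correspondence involves positive-dimensional centers, and I must check that the entire chain is an isomorphism in a neighborhood of the singular stratum $\mathrm{Sing}(\bM_5)\cong\Sym^2(\PP^{5*})$---equivalently, that every center lies in the stable locus. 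Should some center instead meet the singularities, I would fall back on Corollary~\ref{comparisoneandie}, first computing the motivic $\rE(\bM_{\PP^2}(4,2))$ by the proper-modification formula of Remark~\ref{useful}(\ref{resolution}) and then converting to $\rIE$ through the open cones $c^\circ L$, which coincide with those of $\bM_5$ by Remark~\ref{luna}.
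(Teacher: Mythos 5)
Your proposal follows essentially the same route as the paper's proof: the paper relates $\bM_{\PP^2}(4,2)$ to $\bM_5$ via the Bridgeland wall-crossings of \cite[Section 6]{BMW14} and \cite[Proposition 7.4]{CM17}, verifies that the wall-crossing loci lie in the smooth part (by computing $\dim\Ext^1(F,F)=17=\dim\bM_{\PP^2}(4,2)$ for objects $F$ on the two walls), and then transfers $\rIP(\bM_5)$ from Theorem~\ref{mainthm1} by exactly the additive comparison you describe, with your fallback via Corollary~\ref{comparisoneandie} and Remark~\ref{luna} being the mechanism that makes this comparison legitimate. The only caveat is that the correspondence is a pair of flip-type surgeries (exchanging a $\PP^{14}$ and a projective bundle over $\PP^2\times\PP^2$ for smaller loci) rather than a literal chain of smooth blow-ups and blow-downs, but since these surgeries occur in the smooth locus your bookkeeping yields the same corrections.
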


\begin{proof}
The spaces $\bM_{\PP^2}(4, 2)$ and $\bM_5$ are related by Bridgeland wall-crossings on $\PP^2$ (\cite[Section 6]{BMW14} and \cite[Proposition 7.4]{CM17}) with wall-crossing loci given in Table~\ref{tbl:wall}.
\begin{table}[!ht]
\begin{tabular}{|c|c|}
\hline
First wall ($W_1$) & Second wall ($W_2$)\\
\hline\hline
$\ses{\cO_{\PP^2}(1)}{F}{\cO_{\PP^{2}}(-3)[1]}$ & $\ses{I_p(1)}{F}{I_q^{\vee} (-3)[1]}$ for $p$ and $q\in \PP^2$\\
\hline
$\ses{\cO_{\PP^{2}}(-3)[1]}{F'}{\cO_{\PP^2}(1)}$ &$\ses{I_q^{\vee} (-3)[1]}{F'}{I_p(1)}$ for $p$ and $q\in \PP^2$ \\
\hline
\end{tabular}
\medskip
\caption{Bridgeland wall-crossings between $\rM_{\PP^{2}}(4,2)$ and $\bM_5$}
\label{tbl:wall}
\end{table}
Since $\dim \Ext^1(F,F)=17=\dim \bM_{\PP^2}(4,2)$ for $F, F'\in W_1$ or $W_2$, the wall-crossing loci are contained in the smooth part of moduli spaces. The result follows by comparing intersection cohomology groups.
\end{proof}

\begin{remark}\label{imrem2}
The intersection Poincar\'e polynomial of $\bM_{\PP^2}(4,2)$ is exactly the same as that of $\bM_{\PP^2}(4,1)$ (\cite[Corollary 5.2]{CC17}).
\end{remark}
\subsection{Case $n=4$}

Let $\mathbb{F}_1=\PP(\cO_{\PP^1}\oplus \cO_{\PP^1}(-1))$. Consider the blow-up map $\FF_1\lr \PP^2$ at a point. $\rH_2(\mathbb{F}_1,\ZZ)\cong \ZZ\cdot h \oplus \ZZ\cdot e$, where $h$ is the hyperplane class and $e$ is the exceptional divisor class. The canonical divisor of $\FF_1$ is $K_{\FF_1}=-3h+e$ and the arithmetic genus of curve $C$ in $\FF_1$ with $c_1(\cO_C)=dh-ne$ is 
\begin{equation}\label{deggenus}
p_a(C)=\frac{(d-1)(d-2)}{2}-\frac{n(n-1)}{2}
\end{equation}
from the adjunction formula. 

Let $\bM_{\FF_1}((4,2),2)=\bM_{K_{\FF_1}^*}(\FF_1, (4,2), 10m+2)$ be the moduli space of semi-stable sheaves $F$ with $c_1(F)=4h-2e\in \rH_2(\FF_1,\ZZ)$ and the Hilbert polynomial $P(F)(m)=10m+2$.
To obtain the Poincar\'e polynomial of $\bM_{\FF_1}((4,2), 2)$, we use the wall-crossings of the moduli space of $\alpha$-stable pairs. For the ample line bundle $L = -K_{\FF_1}$, let $P(F)(m)=\chi(F\otimes L^{m})$ be the Hilbert polynomial of a coherent sheaf $F$ on $\FF_1$.
A pair $(s, F)$ consists of a coherent sheaf $F$ on $\FF_1$ and a nonzero section $\cO_{\FF_1} \stackrel{s}{\to} F$. The pair is $\alpha$-\emph{semi-stable} if $F$ is pure and, for any subsheaf $F'\subset  F$ 
\[
        \frac{P(F')(m)+\delta\cdot\alpha}{r(F')} \le
        \frac{P(F)(m)+\alpha}{r(F)}
\]
holds for $m\gg 0$, where $r(F)=-K_{\FF_1}\cdot c_1(F)$ and $\delta=1$ if the section $s$ factors through $F'$ and $\delta=0$ otherwise. When the strict inequality holds, $(s,F)$ is called an $\alpha$-\emph{stable} pair.

There exists a projective scheme $\bM_{L}^{\alpha}(\FF_1,P(m))$ parameterizing $S$-equivalence classes of $\alpha$-semi-stable pairs with the Hilbert polynomial $P(m)$ (\cite[Theorem 2.6]{He98}). We also have a decomposition of the moduli space
\[
        \bM_{L}^{\alpha}(\FF_1,P(m))=\bigsqcup_{\beta\in H_2(\FF_1, \ZZ)}
        \bM_{L}^{\alpha}(\FF_1,\beta, P(m)).
\]
\begin{notation}
We denote $\bM_{L}^{\alpha}(\FF_1,\beta, P(m))$ by $\bM_{\FF_1}^\alpha(\beta, P(0))$. If $\alpha$ is sufficiently large (resp. small), we denote  $\alpha=\infty$ (resp. $\alpha = 0^{+}$).
\end{notation}
Wall-crossing phenomena of moduli spaces $\bM_{\FF^1}^\alpha((4,2),2)$ can be analyzed by the following propositions.
\begin{proposition}[\protect{\cite{BJRR12}}]\label{secnum}
\item Let $h^i(m,n):=\dim \rH^i(\FF_1, \cO_{\FF_1}(mh-ne))$. Then, 
\begin{enumerate}
\item $h^0(m,n)={{m+2}\choose{2}}-{{n+1}\choose{2}}$
\item \begin{equation*}
h^1(m,n)=
    \begin{cases*}
       {{-n}\choose{2}}-{{-m-1}\choose{2}} & if $m\geq n$ and $-2\geq n$, \\
      {{n+1}\choose{2}}-{{m+2}\choose{2}}       & if $m-n\leq -2$ and $1\leq n$,\\
      0& otherwise.
    \end{cases*}
  \end{equation*}
  \item 
  \begin{equation*}
h^2(m,n)=
    \begin{cases*}
       {{-m-1}\choose{2}}-{{-n}\choose{2}} & if $m\leq 0$ and $n\leq 0$, \\
      0& otherwise,
    \end{cases*}
  \end{equation*}  
\end{enumerate}
where ${{r}\choose{2}}:=0$ for $r<2$.
\end{proposition}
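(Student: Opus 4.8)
The plan is to compute all three cohomology groups simultaneously by pushing the line bundle forward along the ruling $\pi\colon\FF_1\lr\PP^1$, thereby reducing everything to line bundles on $\PP^1$. First I would fix notation on $\FF_1$: with the intersection numbers $h^2=1$, $e^2=-1$, $h\cdot e=0$ underlying the genus formula \eqref{deggenus}, the fiber class of $\pi$ is $f=h-e$ (so $f^2=0$, $f\cdot e=1$) and the negative section is $C_0=e$ (so $C_0^2=-1$). Using $h=f+C_0$ I rewrite
\[
\cO_{\FF_1}(mh-ne)=\cO_{\FF_1}(mf+cC_0),\qquad c:=m-n,
\]
and write $L$ for this line bundle. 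Since every fiber of $\pi$ is a $\PP^1$ we have $R^{\geq 2}\pi_*=0$, so the Leray spectral sequence degenerates at $E_2$ (the base is a curve) and yields, on dimensions,
\[
\dim\rH^i(\FF_1,L)=\dim\rH^i(\PP^1,\pi_*L)+\dim\rH^{i-1}(\PP^1,R^1\pi_*L).
\]
Thus the whole problem reduces to identifying $\pi_*L$ and $R^1\pi_*L$ as explicit sums of line bundles on $\PP^1$.

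Next I would carry out the pushforward in the two regimes dictated by the sign of $c$. For $c\geq 0$ (that is, $m\geq n$), the standard identification $\pi_*\cO_{\PP(E)}(k)=\Sym^kE$ for $E=\cO_{\PP^1}\oplus\cO_{\PP^1}(-1)$, after matching $\cO_{\PP(E)}(1)$ with $C_0$ by comparing restrictions to a fiber and to $C_0$, gives $\pi_*\cO(cC_0)=\bigoplus_{j=0}^{c}\cO_{\PP^1}(-j)$ and $R^1\pi_*\cO(cC_0)=0$; by the projection formula $\pi_*L=\bigoplus_{j=0}^{c}\cO_{\PP^1}(m-j)$. For $c\leq -2$ (that is, $m-n\leq -2$) the direct image vanishes and relative Serre duality, with relative dualizing sheaf $\omega_\pi=\cO(-f-2C_0)$, gives $R^1\pi_*L\cong\big(\pi_*\cO((-m-1)f+(-c-2)C_0)\big)^\vee=\bigoplus_{j=0}^{-c-2}\cO_{\PP^1}(m+1+j)$; the borderline $c=-1$ produces total higher vanishing. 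These two regimes are exactly the domains appearing in the statement.

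Finally I would feed these into the elementary $\PP^1$ dimensions $\dim\rH^0(\PP^1,\cO(d))=\max(d+1,0)$ and $\dim\rH^1(\PP^1,\cO(d))=\max(-d-1,0)$ and sum. For $c\geq 0$ this gives $h^0=\sum_{j=0}^{c}\max(m-j+1,0)$ and $h^1=\sum_{j=0}^{c}\max(j-m-1,0)$ with $h^2=0$, and for $c\leq -2$ the analogous sums coming from $R^1\pi_*L$ give $h^0=0$ together with $h^1$ and $h^2$. Evaluating these finite sums collapses them to the stated binomials, the three cases of the proposition being precisely the sign regimes $\{m\geq n,\ n\geq -1\}$, $\{m\geq n,\ n\leq -2\}$, and $\{m-n\leq -2\}$. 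As independent cross-checks I would verify Riemann--Roch, $\chi(\cO_{\FF_1}(mh-ne))=\binom{m+2}{2}-\binom{n+1}{2}$ (polynomial binomials), which confirms $(1)$ in the effective range, and Serre duality $h^2(m,n)=h^0(-3-m,-1-n)$, which turns $(1)$ into $(3)$.

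The hard part will not be the geometry but the bookkeeping: tracking the summation bounds correctly and, above all, reconciling the \emph{polynomial} binomial that drops out of the raw sums and of Riemann--Roch with the \emph{truncated} convention $\binom{r}{2}=0$ for $r<2$ used in the statement. The sharp point is the internal split $m\geq -2$ versus $m\leq -3$ when evaluating $\sum_{j}\max(j-m-1,0)$: it is exactly this split that produces the subtracted term $\binom{-m-1}{2}$ in the first branch of $(2)$, a term that the truncation silently suppresses once $m\geq -2$. Handling this uniformly across all $(m,n)$, so that the same three formulas hold with the single convention $\binom{r}{2}=0$ for $r<2$, is the only delicate step.
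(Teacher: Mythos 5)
Your route is necessarily different from the paper's, because the paper contains no proof of this statement at all: Proposition~\ref{secnum} is quoted from the algorithmic reference \cite{BJRR12}. Your plan is the standard self-contained derivation, and its geometric core is correct: with $f=h-e$, $C_0=e$, $c=m-n$, the identifications $\pi_*\cO(cC_0)=\Sym^c(\cO_{\PP^1}\oplus\cO_{\PP^1}(-1))$ and $R^1\pi_*=0$ for $c\geq 0$, total vanishing for $c=-1$, and $R^1\pi_*L\cong\bigoplus_{j=0}^{-c-2}\cO_{\PP^1}(m+1+j)$ for $c\leq -2$ via $\omega_\pi=\cO(-f-2C_0)$ are all right, as is the degeneration of Leray over the curve $\PP^1$. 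In this sense your argument is more informative than what the paper offers.

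However, the step you yourself flag as the delicate one --- that the resulting sums collapse to the three stated formulas \emph{uniformly in $(m,n)$} under the single convention $\binom{r}{2}=0$ for $r<2$ --- cannot be completed, because the proposition as literally stated is false in corner cases, and your own sums detect this. In your regime $c\leq -2$ you correctly get $h^0=0$, but item (1) at $(m,n)=(0,2)$ (i.e.\ $\cO_{\FF_1}(-2e)$) returns $\binom{2}{2}-\binom{3}{2}=-2$. At $(m,n)=(-3,1)$ your sum gives $h^2=\sum_{j=0}^{2}\max(1-j,0)=1$, which is confirmed by duality since $h^2(-3,1)=h^0(0,-2)=h^0(\cO_{\FF_1}(2e))=1$, yet item (3) returns $0$ because $n\geq 1$; and at $(m,n)=(-2,-2)$ item (3) returns $\binom{1}{2}-\binom{2}{2}=-1$. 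Indeed, items (1) and (3) as written are mutually inconsistent with the very cross-check $h^2(m,n)=h^0(-3-m,-1-n)$ you propose. What your computation actually proves is the corrected statement: (1) holds (with truncation) exactly on the range $m\geq n$, with $h^0=0$ otherwise; (2) is correct as stated; and (3) holds with the hypothesis $m-n\leq -2$ in place of ``$m\leq 0$ and $n\leq 0$''. So to finish, you must either restate the proposition with these hypotheses or restrict it to the range in which the paper applies it (the line bundles used in Section~\ref{ihdoflocalsurface} all lie in the good range); as drafted, the final ``collapse to the stated binomials'' step would fail.
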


\begin{proposition}[\protect{\cite[Corollary 1.6]{He98}}]\label{defcoh} 
Let $\Lambda=(s, F)$ and $\Lambda'=(s', F')$ be pairs on a smooth projective variety $X$. Then, there exists a long exact sequence
\begin{align*}
0&\lr \Hom(\Lambda,\Lambda')\lr \Hom (F,F')\lr \Hom(s,H^0(F')/s')\\
&\lr \Ext^1(\Lambda,\Lambda')\lr \Ext^1(F,F')\lr \Hom(s,H^1(F'))\\
&\lr \Ext^2(\Lambda,\Lambda')\lr \Ext^2(F,F')\lr \Hom(s,H^2(F'))\lr \cdots.
\end{align*}
\end{proposition}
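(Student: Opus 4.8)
The plan is to obtain the sequence as the long exact cohomology sequence of a distinguished triangle attached to the deformation complex of pairs; this is precisely He's Corollary 1.6, and I would reconstruct its derivation using derived $\Hom$ in the category of vector spaces. The starting point is that a pair $\Lambda=(s,F)$ is the same datum as a one-dimensional subspace $V=\langle s\rangle\subseteq H^0(F)$ together with the map $s\colon\cO_X\to F$, and that a homomorphism $\Lambda\to\Lambda'$ is a sheaf map $\phi\colon F\to F'$ whose induced map on sections carries $V$ into $V'=\langle s'\rangle$, i.e. $\phi\circ s\in\langle s'\rangle$. I would therefore define $\mathbf{R}\Hom(\Lambda,\Lambda')$ to be the homotopy fibre of the evaluation morphism and record the triangle
\[
\mathbf{R}\Hom(\Lambda,\Lambda')\lr \mathbf{R}\Hom(F,F')\lr \widetilde{R\Gamma}(F')\lr \mathbf{R}\Hom(\Lambda,\Lambda')[1]
\]
in $D^b(\mathrm{Vect}_\CC)$, where the middle arrow is precomposition with $s$, sending $\phi$ to $\phi\circ s\in R\Gamma(X,F')$, and $\widetilde{R\Gamma}(F')$ is the complex $R\Gamma(X,F')$ with its degree-zero term $H^0(F')$ replaced by the quotient $H^0(F')/\langle s'\rangle$.

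Then, taking $\mathbb{H}^\bullet$ of this triangle and setting $\Ext^i(\Lambda,\Lambda'):=\mathbb{H}^i\big(\mathbf{R}\Hom(\Lambda,\Lambda')\big)$ produces exactly the asserted long exact sequence once I identify the cohomology of the two outer complexes: $\mathbb{H}^i(\mathbf{R}\Hom(F,F'))=\Ext^i(F,F')$ by definition of global $\Ext$, while $\mathbb{H}^0(\widetilde{R\Gamma}(F'))=H^0(F')/\langle s'\rangle$ and $\mathbb{H}^i(\widetilde{R\Gamma}(F'))=H^i(F')$ for $i\ge 1$. Since $V=\langle s\rangle$ is one-dimensional, each of these target spaces is canonically $\Hom(V,-)=\Hom(s,-)$ applied to $H^0(F')/\langle s'\rangle$ in degree zero and to $H^i(F')$ in higher degree, which matches the notation of the statement. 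The connecting map $\Ext^i(F,F')\to\Hom(s,H^i(F'))$ is visibly the one induced by $s$, and in degree zero its kernel is $\{\phi:\phi\circ s\in\langle s'\rangle\}=\Hom(\Lambda,\Lambda')$, confirming that $\mathbb{H}^0$ recovers homomorphisms of pairs.

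The one genuinely delicate point, which I expect to be the main obstacle, is justifying the modification producing $\widetilde{R\Gamma}(F')$: one must see that incorporating the target section $s'$ replaces $H^0(F')$ by $H^0(F')/\langle s'\rangle$ while leaving all higher cohomology untouched. I would make this precise through the symmetric coherent-system Hom-complex, the homotopy fibre of
\[
\mathbf{R}\Hom(F,F')\oplus\Hom(V,V')\xrightarrow{\ (\phi,\psi)\,\mapsto\,\phi\circ s-\iota'(\psi(s))\ } R\Gamma(X,F'),
\]
where $\iota'\colon V'\hookrightarrow H^0(F')$ is the inclusion and $\Hom(V,V')\cong\CC$. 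The two-term subcomplex $\big[\Hom(V,V')\xrightarrow{\ \sim\ }\langle s'\rangle\big]$ is acyclic, and splitting it off identifies this fibre with the one over $\widetilde{R\Gamma}(F')$; this simultaneously yields the modified degree-zero term and shows that the scalar freedom $\phi\circ s=\lambda s'$ permitted for pair-morphisms is exactly what is absorbed. The remaining higher-degree portion of the sequence is then the routine long exact sequence of $\mathbf{R}\Hom(F,F')\to R\Gamma(X,F')$ induced by $s$, so no further difficulty arises beyond bookkeeping the one-dimensionality of $V$ and $V'$.
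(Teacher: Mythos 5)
The paper itself gives no proof of this proposition: it is imported verbatim from \cite[Corollary 1.6]{He98}, where $\Ext^i(\Lambda,\Lambda')$ already has a fixed meaning, namely the derived-functor (equivalently, Yoneda) Ext groups in the abelian category of algebraic systems $(V,F,V\otimes\cO_X\to F)$ in which pairs sit as a full subcategory. Measured against that source, the mechanism you set up is the right one and is essentially He's and Le Potier's: the symmetric deformation complex, its fibre triangle, the long exact sequence of hypercohomology, and the cancellation of the acyclic subcomplex $[\Hom(V,V')\stackrel{\sim}{\to}\langle s'\rangle]$, which is exactly how one passes from the ``summand'' form of the sequence (with terms $\Hom(F,F')\oplus\Hom(V,V')$ and $\Hom(V,H^0(F'))$) to the quotient form $\Hom(s,H^0(F')/s')$ stated in the proposition. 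Your five-lemma-style reduction is correct, including in the degenerate case $V'=0$, $s'=0$, which the paper genuinely needs later for pairs of the form $(0,F_2)$.

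The genuine gap is that you \emph{define} $\Ext^i(\Lambda,\Lambda')$ to be $\mathbb{H}^i$ of your homotopy fibre, so the asserted long exact sequence holds by construction; what remains unproved is that these groups coincide with the Ext groups of pairs that the statement (and the paper) is about. You verify the identification in degree $0$ only, via the kernel computation recovering $\Hom(\Lambda,\Lambda')$; in degrees $\ge 1$ nothing is said. This is not cosmetic: the proposition is used in the proof of Proposition~\ref{wallcrossingofpairs} (Table~\ref{table3}) precisely through the fact that $\Ext^1(\Lambda,\Lambda')$ classifies extensions of pairs $0\to\Lambda'\to\Lambda''\to\Lambda\to 0$, whose projectivizations are the wall-crossing centers; with an ad hoc definition of $\Ext^1$ the dimension counts there concern the wrong objects. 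To close the gap you must identify your fibre cohomology with derived-functor Ext in the abelian category of triples $V\otimes\cO_X\to F$: for instance, observe that this category is a Grothendieck category (sheaves of modules over a triangular matrix algebra built from $\cO_X$ and $\CC$), check that both constructions are $\delta$-functors in $\Lambda'$ agreeing in degree $0$, and that both vanish in positive degrees on injective triples; or, more pedestrianly, exhibit the bijection between $\mathbb{H}^1$ of your fibre and isomorphism classes of pair extensions. Either argument is standard, but it is the actual content lying behind \cite[Corollary 1.6]{He98}, and without it your write-up proves a different (if suggestively worded) statement.
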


\begin{proposition}\label{inftystable}
The $\infty$-stable pairs space $\bM_{\FF^1}^\infty((4,2),2)$ is a $\PP^8$-bundle over the Hilbert scheme of three points on $\FF_1$.
\end{proposition}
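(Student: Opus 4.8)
The plan is to realize an $\infty$-stable pair as the datum of a supporting curve in $|4h-2e|$ together with a length-three subscheme, and then to reorganize that datum into a projective bundle over $\Hilb^3(\FF_1)$. First I would pin down $\infty$-stability concretely. Since $\cO_{\FF_1}$ is two-dimensional while $F$ is pure of dimension one, the section $s$ cannot be injective as a map of sheaves: its image is $\cO_D$ for an effective divisor $D\in|4h-2e|$, so $s$ factors as $\cO_{\FF_1}\twoheadrightarrow\cO_D\hookrightarrow F$. Letting $\alpha\to\infty$ in the defining inequality, only the subsheaves $F'$ through which $s$ factors ($\delta=1$) matter, and they force $r(F')=r(F)$; equivalently, no proper full-support subsheaf may contain $\mathrm{im}(s)$. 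This is precisely the Pandharipande--Thomas condition that $Q:=\mathrm{coker}(s)$ be $0$-dimensional, and conversely every pure $F$ with $0$-dimensional cokernel is $\infty$-stable (cf.\ \cite{He98}).

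Next I would record the numerics. By \eqref{deggenus} with $d=4$, $n=2$ we get $p_a(D)=\tfrac{(4-1)(4-2)}{2}-\tfrac{2(2-1)}{2}=2$, hence $\chi(\cO_D)=1-p_a(D)=-1$; combined with $\chi(F)=2$ this forces $\ell(Q)=\chi(F)-\chi(\cO_D)=3$. Moreover Proposition~\ref{secnum}(1) gives $h^0(\FF_1,\cO(4h-2e))=\binom{6}{2}-\binom{3}{2}=12$, so $|4h-2e|\cong\PP^{11}$. I would then construct the bijection with pairs $(D,\xi)$, where $D\in|4h-2e|$ and $\xi\subset D$ is a length-three subscheme. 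In one direction, set $F:=\cHom_{\cO_D}(\cI_{\xi,D},\cO_D)$: applying $\cHom_{\cO_D}(-,\cO_D)$ to $0\to\cI_{\xi,D}\to\cO_D\to\cO_\xi\to 0$ over the Gorenstein curve $D$ yields a pure sheaf $F$ with a canonical section $\cO_D\hookrightarrow F$ whose cokernel is $\cExt^1_{\cO_D}(\cO_\xi,\cO_D)$, of length three. In the other direction, from $0\to\cO_D\to F\to Q\to 0$ one recovers $D$ as the support of $F$ and $\xi$ from the inclusion $F^{\vee}\cong\cI_{\xi,D}\hookrightarrow\cO_D$. Working in families, this identifies $\bM_{\FF_1}^\infty((4,2),2)$ scheme-theoretically with the relative Hilbert scheme of length-three subschemes of the universal divisor over $|4h-2e|$, i.e.\ with the incidence variety
\[
I=\{(\xi,D)\in\Hilb^3(\FF_1)\times|4h-2e|:\xi\subset D\}.
\]

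I would then reorganize $I$ by projecting to $\Hilb^3(\FF_1)$ rather than to the linear system. The fiber over $\xi$ is $\{D:\xi\subset D\}=\PP\big(H^0(\cI_\xi(4h-2e))\big)$, so to obtain a genuine $\PP^8$-bundle it suffices to show this space has constant dimension nine, i.e.\ that $\pi_*\cI_{\mathcal Z}(4h-2e)$ is locally free of rank nine over $\Hilb^3(\FF_1)$ (with $\mathcal Z$ the universal subscheme). By semicontinuity and the value $h^0(\cO(4h-2e))=12$, this reduces to the uniform vanishing $h^1(\FF_1,\cI_\xi(4h-2e))=0$, equivalently the surjectivity of $H^0(\cO(4h-2e))\to H^0(\cO_\xi\otimes\cO(4h-2e))\cong\CC^3$ for every length-three $\xi$.

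The main obstacle is exactly this uniform vanishing, that is, the $2$-very ampleness of $4h-2e$, because it forces one to control every degenerate length-three configuration. I would reduce to the cases where $\xi$ is curvilinear on a rational curve of low degree --- the exceptional class $e$, a fiber $h-e$, or a line $h$ --- on each of which $\cO(4h-2e)$ restricts to a line bundle of degree $\geq 2$ on $\PP^1$, together with the non-curvilinear fat point $\cI_\xi=\mathfrak m^2$. In each case I would use the restriction sequence $0\to\cO(4h-2e-C)\to\cO(4h-2e)\to\cO_C(4h-2e)\to 0$ for the relevant curve $C$ and invoke the $H^1$-vanishings supplied by Proposition~\ref{secnum} (for instance $h^1(4,3)=0$ and $h^1(3,1)=0$) to chain the needed surjections; the fat-point case follows since $(4h-2e)-K_{\FF_1}=7h-3e$ is very positive. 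Granting this vanishing, $\pi_*\cI_{\mathcal Z}(4h-2e)$ is locally free of rank nine and $I\to\Hilb^3(\FF_1)$ is the asserted $\PP^8$-bundle.
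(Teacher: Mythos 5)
Your route is the same as the paper's: the paper's entire proof consists of the computation $h^0(\cO_{\FF_1}(4h-2e))=12$, $h^1(\cO_{\FF_1}(4h-2e))=0$ (Proposition~\ref{secnum}), the statement that $\cO_{\FF_1}(4h-2e)$ is $2$-very ample by \cite{DR96}, and an appeal to the argument of Lemma 2.3 in \cite{CC17} --- and that cited argument is exactly what you reconstruct: $\infty$-stable pairs are pairs $(D,\xi)$ with $D\in|4h-2e|$ and $\xi\subset D$ of length three (the relative Hilbert scheme over the linear system), and projecting instead to $\Hilb^3(\FF_1)$ gives a $\PP^{12-3-1}=\PP^8$-bundle once the conditions imposed by $\xi$ are always independent. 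Your characterization of $\infty$-stability, the numerics $p_a(D)=2$ and $\ell(\mathrm{coker}\,s)=3$, and the reduction of the bundle statement to the uniform vanishing $h^1(\FF_1,\cI_\xi(4h-2e))=0$ for every length-three $\xi$ are all correct.

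The gap is in your verification of that uniform vanishing, i.e.\ of $2$-very ampleness, which you rightly single out as the main obstacle. The proposed reduction ``to $\xi$ curvilinear on a curve of class $e$, $h-e$, or $h$, plus the fat point $\mathfrak{m}^2$'' is not a valid reduction: a general length-three subscheme --- three general points, or a curvilinear $\xi$ with general support and general second-order data --- lies on no curve in any of these three classes, and you give no mechanism for reducing the general case to your list, so the argument does not close. The paper sidesteps this entirely by citing Di Rocco \cite{DR96}. If you want a self-contained proof, the clean route is the Reider-type criterion of Beltrametti--Sommese for $k$-very ampleness: $N:=(4h-2e)-K_{\FF_1}=7h-3e$ is nef with $N^2=40\geq 4\cdot 2+5$, so $K_{\FF_1}+N=4h-2e$ is $2$-very ample unless some effective divisor $D$ satisfies $N\cdot D-3\leq D^2<\tfrac{1}{2}\,N\cdot D<3$; since the effective cone of $\FF_1$ is generated by $e$ and $h-e$, the only effective classes with $N\cdot D\leq 5$ are $D=e$ (where $N\cdot D-3=0>-1=D^2$) and $D=h-e$ (where $N\cdot D-3=1>0=D^2$), so no exceptional divisor exists. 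Alternatively, an induction via residual exact sequences with respect to fibers of $\FF_1\to\PP^1$ (using the $h^1$-vanishings of Proposition~\ref{secnum}, as you suggest) also works, but it must be organized so as to treat arbitrary length-three subschemes, not only those lying on your short list of curves.
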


\begin{proof}
From Proposition~\ref{secnum}, $h^0(\cO_{\FF_1}(4h-2e))=12$ and $h^1(\cO_{\FF_1}(4h-2e))=0$. The line bundle $\cO_{\FF_1}(4h-2e)$ is also $2$-very ample from \cite{DR96}. Thus, the result follows by applying the same argument as Lemma 2.3 in \cite{CC17}.
\end{proof}

During wall-crossings, a pair $(s, F)$ is lying in the wall at $\alpha$ if 
\[
(s,F)=(s,F_1)+(0,F_2)\; \text{and}
\]
\[
\frac{\chi(F)+\alpha}{K_{\FF_1}\cdot c_1(F)}=\frac{\chi(F_1)+\alpha}{K_{\FF_1}\cdot c_1(F_1)}=\frac{\chi(F_2)}{K_{\FF_1}\cdot c_1(F_2)}.
\]
The numerical walls of $\bM_{\FF^1}^\alpha((4,2),2)$ are listed in Table~\ref{table2} by a direct calculation.
\begin{table}
\begin{tabular}{|l|p{9cm}|}
\hline
$\alpha$
&
$(s,((4,2),2))=(s,(d_1h-n_1e,\chi_1))\oplus (0,(d_2h-n_2e,\chi_2))$\\
\hline
\hline
$\frac{1}{2}$&
$(s,((2,0),1))\oplus (0,((2,2),1))$ \\
\hline
$\frac{4}{3}$&
$(s,((2,2),0))\oplus (0,((2,0),2))$ \\
\hline
$\frac{4}{3}'$&
$(s,((3,2),1))\oplus (0,((1,0),1))$\\
\hline
$3$&
$(s,((3,1),1))\oplus (0,((1,1),1))$\\
\hline
$8$&
$(s,((3,1),0))\oplus (0,((1,1),2))$\\
\hline
$8'$&
$(s,((4,3),1))\oplus (0,((0,-1),1))$\\
\hline
$13$&
$(s,((3,1),-1))\oplus (0,((1,1),3))$\\
\hline
\end{tabular}
\medskip
\caption{Numerical walls of $\bM_{\FF_1}^{\alpha}((4,2),2)$}
\label{table2}
\end{table}

\begin{lemma}
The walls at $\alpha=\frac{1}{2},\frac{4}{3}$, and $13$ (Table~\ref{table2}) are empty.
\end{lemma}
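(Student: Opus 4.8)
The plan is to show that each of the three walls $\alpha=\tfrac{1}{2},\tfrac{4}{3},13$ produces no actual $S$-equivalence class, by examining the would-be destabilizing decomposition $(s,F)=(s,F_1)\oplus(0,F_2)$ recorded in Table~\ref{table2} and deriving a contradiction with $\alpha$-semistability. The key tool is that a wall is nonempty only if both the sub-pair $(s,F_1)$ and the quotient sheaf $F_2$ are themselves $\alpha$-semistable (as a pair and as a sheaf, respectively) on the wall, and moreover the section $s:\cO_{\FF_1}\to F$ must genuinely factor through $F_1$. First I would, for each of the three walls, read off the invariants $(d_ih-n_ie,\chi_i)$ and compute the relevant cohomology dimensions $h^0(d_ih-n_ie,\cdot)$ via Proposition~\ref{secnum}; the existence of a nonzero section of $F_1$ forces $h^0(F_1)\neq 0$, and the purity of $F_2$ together with the Hilbert-polynomial constraint $c_1(F_2)=d_2h-n_2e$ constrains which pure sheaves can occur.

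For the wall $\alpha=\tfrac12$, the summand $(0,F_2)$ carries class $((2,2),1)$, and I would check using \eqref{deggenus} that a pure sheaf supported on a curve of class $2h-2e$ with the prescribed Euler characteristic cannot exist as a stable constituent, or else that the numerical slope equality cannot be met by any genuine subsheaf; the likely mechanism is that $p_a$ of the support curve is negative or that $h^0$ of the relevant twist vanishes, ruling out the section. For $\alpha=\tfrac43$ the sub-pair has class $((2,2),0)$ with $\chi_1=0$, so I would argue that a section $\cO_{\FF_1}\to F_1$ with $\chi(F_1)=0$ forces $h^0(F_1)=0$ by Proposition~\ref{secnum} (a pure one-dimensional sheaf of Euler characteristic zero supported on a curve of that class has no global sections), contradicting $s\neq 0$ factoring through $F_1$. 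For $\alpha=13$, the quotient $(0,F_2)$ has class $((1,1),3)$: a pure sheaf on a curve of class $h-e$ (an irreducible rational curve, by \eqref{deggenus} $p_a=0$) with $\chi=3$ must be a line bundle of high degree, and I would show the resulting destabilizing inequality cannot be saturated because the companion sub-pair $(s,((3,1),-1))$ with $\chi_1=-1$ again forces $h^0(F_1)=0$, killing the section.

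Concretely, the uniform strategy is: assume the wall is nonempty, take the Jordan--H\"older-type decomposition on the wall, and apply the section-existence constraint together with the vanishing/nonvanishing dictionary of Proposition~\ref{secnum} to the sub-object $(s,F_1)$. In each case the contradiction should reduce to either $h^0$ of the sub-sheaf vanishing (so no nonzero $s$ can factor through it) or the support curve having the wrong arithmetic genus for a stable pure sheaf of the listed $\chi$ to exist. The main obstacle I anticipate is the bookkeeping for the subsheaf structure: I must ensure that the inequality in the definition of $\alpha$-semistability is tested against the \emph{correct} reduced Hilbert polynomials (normalized by $r(F')=-K_{\FF_1}\cdot c_1(F')$), and that I am not merely ruling out the \emph{balanced} numerical decomposition but every actual sub-pair realizing it; handling the possibility that a non-split extension could still place a pair on the wall without yielding a genuine destabilizer is the delicate point. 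I expect Proposition~\ref{defcoh} to be needed to control $\Ext^1$ and confirm that any pair on the purported wall is forced to split, after which the vanishing of $h^0(F_1)$ completes the argument.
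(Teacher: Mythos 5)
Your overall strategy---rule out each wall by showing that one of its Jordan--H\"older constituents cannot exist as a stable object---is exactly the paper's, but the mechanisms you propose for producing the contradictions are wrong or unjustified in all three cases, and the common source is a misuse of Proposition~\ref{secnum}. That proposition computes $h^i(\FF_1,\cO_{\FF_1}(mh-ne))$ for \emph{line bundles on the surface}; it gives no control over global sections of pure one-dimensional sheaves supported on curves, which is what every wall here requires. Concretely, at $\alpha=\tfrac{4}{3}$ your claim that $\chi(F_1)=0$ forces $h^0(F_1)=0$ is simply false: for two fibers $C,C'$ of class $h-e$, the sheaf $\cO_C\oplus\cO_{C'}(-2)$ has $c_1=2h-2e$, $\chi=0$ and $h^0=1$. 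The actual contradiction runs in the opposite direction: the pair $(s,F_1)$ carries a nonzero section by definition, its image is $\cO_C$ with $c_1(\cO_C)=h-e$ or $2h-2e$, and the sub-pair $(s,\cO_C)$, through which $s$ factors, has $\chi(\cO_C)\geq 1>0=\chi(F_1)$ and so destabilizes $(s,F_1)$ at $\alpha=\tfrac{4}{3}$. Similarly, at $\alpha=\tfrac{1}{2}$ the paper's contradiction comes from the \emph{existence} of a section of the sheaf constituent $F_2$ (stability and $\chi(F_2)=1$ give $h^0(F_2)\geq 1$), whose image $\cO_C$, of class $h-e$ or $2h-2e$, destabilizes $F_2$ as a sheaf; your hedge that ``$h^0$ of the relevant twist vanishes, ruling out the section'' is backwards, since the constituent $(0,F_2)$ has no section that needs ruling out.

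At $\alpha=13$ your intended conclusion $h^0(F_1)=0$ is the correct one, but it does not follow from Proposition~\ref{secnum} or from any numerical bookkeeping; it requires identifying the sheaf. The paper shows that the dual $F_1^D=\cExt^1(F_1,\omega_{\FF_1})$ sits in the unique non-split extension $\ses{\cO_C}{F_1^D}{\CC_p}$ with $c_1(\cO_C)=3h-e$, so that $F_1\cong F_1^{DD}=I_{p,C}$; sections of $I_{p,C}$ are constants on $C$ vanishing at $p$, hence zero. Without this structural step your argument for the third wall has no content. Finally, your worry that a non-split extension could ``place a pair on the wall without yielding a genuine destabilizer,'' and the anticipated appeal to Proposition~\ref{defcoh}, is a red herring for this lemma: a pair lies on the wall only if it is strictly $\alpha$-semistable there, i.e.\ only if both constituents $(s,F_1)$ and $(0,F_2)$ exist as stable objects with the listed invariants, so once one constituent is shown not to exist there is nothing to extend. (Proposition~\ref{defcoh} is what the paper uses later, in Proposition~\ref{wallcrossingofpairs}, to compute the extension groups at the \emph{nonempty} walls.)
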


\begin{proof}
The case $\alpha=\frac{1}{2}$ cannot occur. Let $F_2$ be a stable sheaf with $c_1(F_2)=2h-2e$ and $\chi(F_2)=1$. Support of $F$ should be the fiber of the projection map $p:\FF_1=\PP(\cO_{\PP^1}\oplus \cO_{\PP^1}(-1))\lr \PP^1$. From $\chi(F_2)=1$, we have the canonical map $s:\cO_{\FF_1}\lr F_2$. Hence the image of $s$ is of the form $\mathrm{im}(s)=\cO_C$, where $C$ is supported on the fiber of the map $p$. Therefore, the possible classes for $C$ are only $c_1(\cO_C)=h-e$ or $2h-2e$, but both classes violate the stability of $F_2$. 

For $\alpha=\frac{4}{3}$, let $(s,F_1)$ be a stable pair with $c_1(F_1)=2h-2e$ and $\chi(F_1)=0$. Then the image of the section map $s:\cO_{\FF_1}\lr F_1$ is $\mathrm{im}(s)=\cO_C$, such that $c_1(\cO_C)=h-e$ or $2h-2e$, which is a contradiction to the stability of $(s,F_1)$.

Finally, for $\alpha=13$, let $(s,F_1)$ be the stable pair with $c_1(F_1)=3h-e$ and $\chi(F_1)=-1$. The dual $F_1^D:=\cE xt^1(F_1, \omega_{\FF_1})$ of $F_1$ fits into the unique non-split extension 
\[
\ses{\cO_C}{F_1^D}{\CC_p}
\]
for $c_1(\cO_C)=3h-e$, $p\in C$, hence $F_1\cong F_1^{DD}=I_{p,C}$ (cf. \cite[Proposition 4.4]{CGKT18}). Since $h^0(F_1)=0$, the wall is empty.
\end{proof}

\begin{proposition}\label{wallcrossingofpairs}
There exist wall-crossings among moduli spaces $\bM_{\FF_1}^\alpha((4,2),2)$ of $\alpha$-stable pairs on $\FF_1$:
$$\bM_{\FF_1}^\infty((4,2),2)\dashleftarrow\dashrightarrow\bM_{\FF_1}^+((4,2),2),$$
where the blow-up centers at each wall are listed in Table~\ref{table3}.

\begin{table}
\begin{tabular}{|l|l|p{6.8cm}|}
\hline
$\alpha$&Blow-up center at $\alpha+\epsilon$&Blow-up center at $\alpha-\epsilon$\\
\hline
\hline
$\frac{4}{3}'$&a $\PP^3$-bundle over $\PP^2\times \PP^6$&a $\PP^2$-bundle over $\PP^2\times \PP^6$\\
\hline
$3$&a $\PP^2$-bundle over (a $\PP^1$-bundle over $\FF_1$)$\times\PP^7$&a $\PP^1$-bundle over (a $\PP^1$-bundle over $\FF_1$)$\times\PP^7$\\
\hline
$8$&a $\PP^3$-bundle over $\PP^8\times \PP^1$&a $\PP^1$-bundle over $\PP^8\times \PP^1$\\
\hline
$8'$&a $\PP^3$-bundle over $\PP^8$&a $\PP^2$-bundle over $\PP^8$\\
\hline
\end{tabular}
\medskip
\caption{Blow-up centers of $\bM_{\FF_1}((4,2),2)$}
\label{table3}
\end{table}

\end{proposition}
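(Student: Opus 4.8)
The plan is to follow the moduli space $\bM_{\FF_1}^\alpha((4,2),2)$ as $\alpha$ decreases from $\infty$ to $0^+$, crossing exactly the four nonempty walls $\alpha=8',8,3,\tfrac43'$ of Table~\ref{table2}. The starting point is fixed by Proposition~\ref{inftystable}, which identifies the $\infty$-stable space as a $\PP^8$-bundle over $\Hilb^3(\FF_1)$. From there I would argue that each wall-crossing is a variation-of-GIT (Thaddeus-type) flip: since the pairs are built by GIT with linearization parameter $\alpha$, crossing a wall replaces one semistable locus by another through a birational map whose two local models are the projective bundles listed in Table~\ref{table3}.

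First, for a fixed wall at $\alpha_0$ I would set up the standard local picture. A strictly $\alpha_0$-semistable pair $(s,F)$ is a nonsplit extension of its two Jordan--H\"older factors $(s,F_1)$ and $(0,F_2)$ read off from Table~\ref{table2}; crossing the wall exchanges the two extension directions. Thus near the wall the $(\alpha_0+\epsilon)$-moduli contains a projective bundle $\PP(\Ext^1((0,F_2),(s,F_1)))$ over the base $B:=\bM(s,F_1)\times\bM(F_2)$, while the $(\alpha_0-\epsilon)$-moduli contains $\PP(\Ext^1((s,F_1),(0,F_2)))$ over the same $B$, and the two are exchanged by the flip with these centers. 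Hence the proposition reduces to (i) identifying the factor moduli $\bM(s,F_1)$ and $\bM(F_2)$, and (ii) computing the two $\Ext^1$-dimensions.

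For step (i) I would use the degree--genus formula \eqref{deggenus} together with the cohomology table of Proposition~\ref{secnum}. At $\alpha=\tfrac43'$ and $\alpha=8'$ both factors are supported on rational curves, so each is a structure sheaf $\cO_C$ with its canonical section, and the factor moduli are the complete linear systems $|c_1|$; using $h^0(h)=3$, $h^0(3h-2e)=7$, $h^0(4h-3e)=9$ and the rigidity of $\cO_e$ this yields the bases $\PP^2\times\PP^6$ and $\PP^8$. At $\alpha=3$ and $\alpha=8$ the factor $F_1$ lives on a genus-one curve in $|3h-e|\cong\PP^8$: when $\chi=0$ it is $\cO_C$, giving $\bM(s,F_1)=\PP^8$, and when $\chi=1$ the section cuts out a point $p\in C$, so $\bM(s,F_1)$ is the universal curve over $\PP^8$, i.e. a $\PP^7$-bundle over $\FF_1$; paired with the fiber-supported $\bM(F_2)\cong\PP^1$ these reproduce the bases of Table~\ref{table3} up to the harmless reindexing of projective factors that leaves the Poincar\'e polynomial unchanged. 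For step (ii) I would feed the factors into the long exact sequence of Proposition~\ref{defcoh}, which expresses each $\Ext^1$ of pairs through $\Hom$ and $\Ext^1$ of the underlying sheaves and the section terms $\Hom(s,H^i(F'))$; each sheaf extension and each $H^i$ is then evaluated by Proposition~\ref{secnum} and Serre duality on $\FF_1$. The outcome matches the two projective-bundle ranks $(\PP^a,\PP^b)$ with $(a,b)=(3,2),(2,1),(3,1),(3,2)$ for $\alpha=\tfrac43',3,8,8'$ respectively.

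The main obstacle will be the $\Ext^1$-computations at $\alpha=3$ and $\alpha=8$, where $F_1$ is a twisted line bundle on a genus-one curve rather than a structure sheaf: there the section term $\Hom(s,H^1(F_1))$ is genuinely present, the connecting maps in Proposition~\ref{defcoh} need not vanish, and one must verify that the two extension spaces have constant rank over $B$ and that the flip centers are smooth. I would handle this by invoking the del Pezzo vanishing $\Ext^2(F,F')\cong\Hom(F',F\otimes K_{\FF_1})^\vee$, which forces unobstructedness of the relevant deformations exactly as in the sheaf case treated in Remark~\ref{luna}; this reduces the count to the numerical Euler characteristic $\chi((s,F_1),(0,F_2))$, computable from Proposition~\ref{secnum}. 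Since the two $\Ext^1$-dimensions differ precisely by this Euler characteristic, both ranks are pinned down simultaneously, confirming that each wall-crossing is the asserted flip and that the centers are the projective bundles of Table~\ref{table3}.
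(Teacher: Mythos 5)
Your proposal follows essentially the same route as the paper's proof: the paper likewise identifies the Jordan--H\"older factors at each wall via the degree--genus formula \eqref{deggenus} and Proposition~\ref{secnum}, realizes the two flip loci as projectivized extension bundles $\PP(\Ext^1((s,F_1),(0,F_2)))$ and $\PP(\Ext^1((0,F_2),(s,F_1)))$ over the product of the factor moduli, and computes the ranks from Propositions~\ref{secnum} and~\ref{defcoh}, carrying this out explicitly only for $\alpha=\frac{4}{3}'$ and declaring the remaining walls analogous. The one discrepancy is harmless and, if anything, in your favor: at $\alpha=3$ your base --- the universal curve over $\PP^8=|3h-e|$ (a $\PP^7$-bundle over $\FF_1$) times $\PP^1$ --- is the natural description of the factor moduli, and it agrees with the table's ``(a $\PP^1$-bundle over $\FF_1$)$\times\PP^7$'' at the level of Poincar\'e polynomials, which is all that enters Corollary~\ref{cor2}.
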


\begin{proof}
For wall $\alpha=\frac{4}{3}'$, let $(s,F_1)$ be the stable pair with $c_1(F_1)=3h-2e$ and $\chi(F_1)=1$. From \eqref{deggenus}, the section map $s:\cO_C\lr F_1$ must be an isomorphism and hence the pairs $(s,F_1)$ are parameterized by $|\cO_{\FF_1}(3h-2e)|\cong\PP^6$ (Proposition~\ref{secnum}). By the same argument, the locus for pairs $(0,F_2)$ with $c_1(F_2)=h$ and $\chi(F_2)=1$ is parameterized by $|\cO_{\FF_1}(h)|\cong\PP^2$. For these pairs, the wall crossing locus at $\alpha=\frac{4}{3}'+\epsilon$ parameterizes the non-split extensions
\[
\ses{(0,F_2)}{(s,F')}{(s,F_1)}.
\]
On the other hand, the wall locus at $\alpha=\frac{4}{3}'-\epsilon$ parameterizes the non-split extensions
\[
\ses{(s,F_1)}{(s,F'')}{(0,F_2)}.
\]
The results in Table~\ref{table3} follow since $\Ext^1((s,F_1),(0,F_2))=\CC^4$ and $\Ext^1((0,F_2),(s,F_1))=\CC^3$ (Proposition~\ref{secnum} and Proposition~\ref{defcoh}). 

The other cases can be derived by the same method and we omit the detail.
\end{proof}

We compare spaces $\bM_{\FF_1}^+((4,2),2)$ and $\bM_{\FF_1}((4,2),2)$. For the polystable sheaf $F \in \bM_{\FF_1}((4,2),2)\setminus \bM_{\FF_1}^s((4,2),2)$, $F \cong \cO_{C_{1}}\oplus \cO_{C_{2}}$ for some curves $C_1$ and $C_2$ with $c_1(\cO_{C_i})=2h-e$ for $i=1,2$. From Proposition~\ref{secnum}, the space $\mathrm{Sym}^2\PP^{4}$ parametrize such sheaves.

\begin{proposition}\label{prop:fiberofphi}      
Let $\phi : \bM_{\FF_1}^+((4,2),2) \to \bM_{\FF_1}((4,2),2)$ be the forgetful map $(s, F) \mapsto F$.
\begin{enumerate}
\item $\phi$ is a $\PP^1$-fibration over stable locus $\bM_{\FF_1}^s((4,2),2)$.
\item Let $\Delta \subset \mathrm{Sym}^2\PP^{4}$ be the diagonal. 
\begin{enumerate}
\item For $[F]\in \mathrm{Sym}^2\PP^{4} \setminus \Delta\subset \bM_{\FF^1}((4,2),2)\setminus \bM_{\FF^1}^s((4,2),2)$, the fiber $\phi^{-1}([F])=(s,F)$ parameterizes the non-split extension class
\[
\ses{(0,\cO_{C_2})}{(s,F)}{(s,\cO_{C_1})},
\]
which is a $(\PP^3-\{\mathrm{pt}\})$-bundle over $\PP^4\times \PP^4\setminus \PP^4$.
\item For $[F]\in \Delta$, the fiber $\phi^{-1}([F])$ parametrizes the unique pair $(s,\cO_{C_1}\oplus \cO_{C_2})$ such that $C_1\neq C_2$.
\end{enumerate}
\item Over $\Delta \cong \PP^{4} = |\cO_{\FF_1}(2h-e)| \subset\bM_{\FF^1}((4,2),2)\setminus \bM_{\FF^1}^s((4,2),2)$, $\phi$ is a $\PP^3$-fibration over its base space $\Delta$.
\end{enumerate}
\end{proposition}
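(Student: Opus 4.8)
The plan is to describe $\phi^{-1}([F])$ according to the stability type of the underlying sheaf $F$, the dividing case being the poly-stable locus $\mathrm{Sym}^2\PP^4\subset\bM_{\FF_1}((4,2),2)$ of sheaves $\cO_{C_1}\oplus\cO_{C_2}$ with $C_i\in|\cO_{\FF_1}(2h-e)|$. First I would record the $0^+$-stability criterion for a pair: the Jordan--H\"older factors $\cO_{C_i}$ all have reduced Hilbert polynomial $m+\tfrac15$ (multiplicity $r=(2h-e)\cdot(3h-e)=5$, $\chi=1$), so a short manipulation of the defining inequality shows that a pair $(s,F')$ with these factors is $0^+$-stable exactly when $s$ fails to factor through every subsheaf $\cO_{C_i}\subset F'$ of multiplicity $5$ and reduced Hilbert polynomial $m+\tfrac15$; i.e. $s$ must surject onto a Jordan--H\"older quotient (the tie-breaking $\alpha$-term forces $\delta=0$).

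For (1), over a stable sheaf $F$ every nonzero section already yields a $0^+$-stable pair, and two sections give isomorphic pairs iff they are proportional because $\mathrm{Aut}(F)=\CC^\ast$; hence $\phi^{-1}([F])=\PP H^0(F)$ and it remains to prove $\dim H^0(F)=2$. Here $H^2(F)=0$ since $\Hom(F,K_{\FF_1})=0$ for the pure one-dimensional $F$, and $H^1(F)=0$ by Serre duality: $H^1(F)^\vee\cong H^0(F^D)$ for the dual sheaf $F^D=\cExt^1(F,K_{\FF_1})$, which is again semi-stable with $\chi(F^D)=-\chi(F)=-2<0$ and therefore has no sections (a section would produce a subsheaf violating semi-stability). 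With $\chi(F)=2$ this gives $h^0(F)=2$, so $\phi$ is a $\PP^1$-fibration over $\bM_{\FF_1}^s((4,2),2)$.

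For (2) and (3) the criterion shows that over $\mathrm{Sym}^2\PP^4$ the section must surject onto a quotient $\cO_{C_i}$, so I would present each fiber as a projectivized space of \emph{pair}-extensions and compute its dimension from Proposition~\ref{defcoh}. Feeding in $\Hom(\cO_{C_1},\cO_{C_2})=0$, $H^0(\cO_{C_i})=\CC$, $H^1(\cO_{C_i})=0$, and the Riemann--Roch values $\chi(\cO_{C_1},\cO_{C_2})=-(2h-e)^2=-3$ (so $\dim\Ext^1(\cO_{C_1},\cO_{C_2})=3$) and $\chi(\cO_C,\cO_C)=-3$ (so $\dim\Ext^1(\cO_C,\cO_C)=4$) yields
\[
\dim\Ext^1\big((s,\cO_{C_1}),(0,\cO_{C_2})\big)=4,\qquad \dim\Ext^1\big((s,\cO_C),(0,\cO_C)\big)=4 .
\]
Over the off-diagonal locus the quotient/sub decomposition orders the pair $\{C_1,C_2\}$, so the base is the ordered configuration space $\PP^4\times\PP^4\setminus\PP^4$; the fiber is $\PP\Ext^1\big((s,\cO_{C_1}),(0,\cO_{C_2})\big)=\PP^3$ with the single point whose underlying sheaf-extension splits removed (that split pair is shared with the reversed ordering and is counted once, on the locus of genuinely split pairs), giving the asserted $(\PP^3-\{\mathrm{pt}\})$-bundle. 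Over the diagonal $\Delta\cong\PP^4=|\cO_{\FF_1}(2h-e)|$ the split sheaf $\cO_C^{\oplus2}$ admits \emph{no} $0^+$-stable section (every section factors through some line subsheaf $\cO_C$), so only nonsplit self-extensions contribute and the entire $\PP^3=\PP\Ext^1\big((s,\cO_C),(0,\cO_C)\big)$ occurs; constancy of these $\Ext$ ranks in $C$ lets cohomology-and-base-change upgrade the fiberwise count to a genuine $\PP^3$-fibration, which is (3).

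The main obstacle is the stability bookkeeping rather than any single computation: one must isolate exactly the $0^+$-stable members inside each extension space and reconcile the two regimes --- showing that the split sheaf carries a stable section precisely off the diagonal (contributing the deleted point and clause (2)(b)) but none on it --- and then promote these fiberwise statements to global projective bundles, which requires checking that the relevant $\Ext$-sheaves have locally constant rank so that cohomology-and-base-change applies over $\PP^4\times\PP^4\setminus\PP^4$ and over $\Delta$. The uniform vanishing $H^1(F)=0$ on the whole stable locus (needed for (1)) is the other point deserving care, since it rests on semi-stability of the dual sheaf rather than on a smooth-curve computation.
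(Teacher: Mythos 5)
Your proposal is correct and takes essentially the same route as the paper: part (1) is proved by the identical argument ($h^0(F)=2$ via Serre duality, $h^1(F)^\vee\cong h^0(F^D)$, and semi-stability of the dual sheaf $F^D$ with $\chi(F^D)=-2$), while for parts (2)--(3) the paper simply invokes \cite[Proposition 3.6]{CM16} after recording the extension groups $\Ext^1_{\FF_1}(\cO_{C_1},\cO_{C_2})=\CC^3$ for $C_1\neq C_2$ and $\CC^4$ for $C_1=C_2$, which are exactly the inputs your direct computation via Proposition~\ref{defcoh} reproduces (giving the pair-extension spaces $\CC^4$ and hence the $\PP^3$'s). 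Your reconstruction of the fiber structure --- the $(\PP^3-\{\mathrm{pt}\})$-bundle over ordered pairs, the single genuinely split stable pair per unordered pair off the diagonal, and the full $\PP^3$ over the diagonal where the split sheaf admits no stable section --- matches the intended content of the statement and the way it is used in Corollary~\ref{cor2}.
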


\begin{proof}
Since $\chi(F)=2$ for each $F\in \bM_{\FF^1}((4,2),2)$, $h^0(F)\geq 2$. If $h^0(F)\geq 3$, then $h^0(F^D)\geq 1$ from the Serre duality. Hence there is a non-zero homomorphism $\cO_{C}\stackrel{s}{\lr} F^D$ for $\mathrm{Supp}(F^D)=C$, $c_1(\cO_C)=4h-2e$, which violates the semi-stability of $F^D$. Thus, $h^0(F)=2$ which implies item (1).

The remaining proof of the claim follows \cite[Proposition 3.6]{CM16} by changing the extension groups into
\[
\Ext_{\FF_1}^1(\cO_{C_1},\cO_{C_2} ) = \begin{cases}
        \CC^3, & \text{for } C_1\neq C_2 \in |\cO_{\FF_1}(2h-e)|\\
        \CC^4, & \text{for } C_1= C_2 \in |\cO_{\FF_1}(2h-e)|,
\end{cases}
\]
and we omit the detail.
\end{proof}

\begin{corollary}\label{cor2}
The virtual Poincar\'e polynomial of $\bM_{\mathbb{F}_1}((4,2), 2)$ is 
\[
        \rP(\bM_{\mathbb{F}_1}((4,2), 2))=1+3t^2+6t^4+8t^6+7t^8+7t^{10}+6t^{12}+8t^{14}+8t^{16}+10t^{18}+9t^{20}+8t^{22}+3t^{24}+t^{26}.
\]
\end{corollary}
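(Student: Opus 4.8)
The plan is to compute $\rP(\bM_{\FF_1}((4,2),2))$ by accumulating the virtual Poincar\'e polynomial through the entire chain of maps assembled in this section, using only the motivic additivity and multiplicativity of $\rP$ (Proposition~\ref{proepoly} and the fact that $\rP=\rE_c(-1,-1)$ enjoys the motivic properties). First I would start from the explicit model of the endpoint at $\alpha=\infty$: by Proposition~\ref{inftystable}, $\bM_{\FF_1}^\infty((4,2),2)$ is a $\PP^8$-bundle over $\Hilb^3(\FF_1)$, so its virtual Poincar\'e polynomial is $\rP(\PP^8)\cdot \rP(\Hilb^3(\FF_1))$; the latter I would obtain from the G\"ottsche formula (or a direct stratification of $\Hilb^3$ of a rational surface), recording the answer as a concrete polynomial.

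Next I would walk down the wall-crossing chain $\bM_{\FF_1}^\infty((4,2),2)\dashrightarrow \bM_{\FF_1}^+((4,2),2)$ of Proposition~\ref{wallcrossingofpairs}. At each of the four walls $\alpha=\tfrac{4}{3}',3,8,8'$ the two adjacent moduli spaces are related by blowing down a projective bundle over the wall center and blowing up a different one; since every center and every fiber listed in Table~\ref{table3} is a tower of projective bundles over products of $\PP^k$'s and $\FF_1$, motivic additivity gives, at each wall,
\[
\rP(\text{after})=\rP(\text{before})-\rP(\text{bundle}_+)+\rP(\text{bundle}_-),
\]
where each bundle contribution factors as $\rP(\PP^{a})\cdot\rP(\text{center})$ using Proposition~\ref{proepoly}(3). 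Summing these four corrections to the starting value yields $\rP(\bM_{\FF_1}^+((4,2),2))$ as an explicit polynomial.

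Finally I would pass from $\bM_{\FF_1}^+((4,2),2)$ to the target $\bM_{\FF_1}((4,2),2)$ through the forgetful map $\phi$ of Proposition~\ref{prop:fiberofphi}, again by stratifying the base. Over the stable locus $\bM_{\FF_1}^s((4,2),2)$ the map is a $\PP^1$-fibration, over $\Sym^2\PP^4\setminus\Delta$ the fibers are $(\PP^3-\{\mathrm{pt}\})$-bundles over $\PP^4\times\PP^4\setminus\PP^4$, and over $\Delta\cong\PP^4$ the map is a $\PP^3$-fibration; by additivity I can write $\rP(\bM_{\FF_1}^+)$ as $\rP(\PP^1)\cdot\rP(\bM_{\FF_1}^s)$ plus the two boundary contributions, then solve for $\rP(\bM_{\FF_1}^s)$ and recombine with the polystable strata to reconstitute $\rP(\bM_{\FF_1}((4,2),2))$. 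Concretely, using $\rP(\bM_{\FF_1}((4,2),2))=\rP(\bM_{\FF_1}^s)+\rP(\Sym^2\PP^4)$ on the base, and noting $\rP(\bM_{\FF_1}^+)=\rP(\PP^1)\rP(\bM_{\FF_1}^s)+\big(\rP(\PP^3)-1\big)\big(\rP(\PP^4\times\PP^4)-\rP(\PP^4)\big)+\rP(\PP^3)\rP(\Delta)$, I can back out the desired answer. I expect the main obstacle to be purely bookkeeping rather than conceptual: the four wall-crossings and the three-stratum fiber analysis produce a long alternating sum of products of $\tfrac{1-t^{2k}}{1-t^2}$ factors, and the delicate point is keeping the exact ranks of every projective bundle (and the twist in the $(\PP^3-\{\mathrm{pt}\})$-bundle, i.e. subtracting the section locus correctly) consistent so that the many cancellations land on the stated polynomial; I would verify the final polynomial by checking its degree equals $2\dim \bM_{\FF_1}((4,2),2)=2\cdot 17-8=26$ and by confirming Poincar\'e-duality-type symmetry of the leading and trailing coefficients as a consistency test.
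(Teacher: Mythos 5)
Your strategy coincides with the paper's proof in all three stages: the $\PP^8$-bundle model $\rP(\bM_{\FF_1}^\infty((4,2),2))=\rP(\PP^8)\cdot\rP(\mathrm{Hilb}^3(\FF_1))$, the four wall corrections $(\rP(\PP^{a_-})-\rP(\PP^{a_+}))\cdot\rP(\text{center})$ from Table~\ref{table3}, and the final passage through the forgetful map $\phi$. The first two stages are fine. The gap is in the third stage: your explicit identity
\[
\rP(\bM_{\FF_1}^+)=\rP(\PP^1)\rP(\bM_{\FF_1}^s)+\bigl(\rP(\PP^3)-1\bigr)\bigl(\rP(\PP^4\times\PP^4)-\rP(\PP^4)\bigr)+\rP(\PP^3)\rP(\Delta)
\]
misses an entire stratum of $\bM_{\FF_1}^+((4,2),2)$. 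Over a polystable point $[\cO_{C_1}\oplus\cO_{C_2}]\in \mathrm{Sym}^2\PP^4\setminus\Delta$, the fiber of $\phi$ is \emph{not} exhausted by the projectivized non-split extensions: since $h^0(\cO_{C_i})=1$ and $\mathrm{Aut}(\cO_{C_1}\oplus\cO_{C_2})=\CC^*\times\CC^*$, there is in addition exactly one $0^+$-stable pair $(s,\cO_{C_1}\oplus\cO_{C_2})$ whose underlying sheaf is the split sheaf itself; this is precisely item (2)(b) of Proposition~\ref{prop:fiberofphi}. These split pairs sweep out a copy of $\mathrm{Sym}^2\PP^4\setminus\Delta$ inside $\bM_{\FF_1}^+$ and contribute the extra term $\rP(\mathrm{Sym}^2\PP^4)-\rP(\PP^4)$ that appears in the paper's decomposition and is absent from yours.

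This omission is not a bookkeeping slip that washes out; it makes your system unsolvable. Writing $q=t^2$, the wall-crossing stage (done correctly) gives $\rP(\bM_{\FF_1}^+)$ with coefficient sequence $[1,4,11,18,22,23,24,24,24,23,22,18,11,4,1]$. Subtracting only your two boundary terms leaves a polynomial whose alternating coefficient sum equals $2$, so it is not divisible by $\rP(\PP^1)=1+q$: no polynomial $\rP(\bM_{\FF_1}^s)$ satisfies your equation, and the stated answer cannot be reached. Restoring $\rP(\mathrm{Sym}^2\PP^4)-\rP(\PP^4)$ makes the difference divisible by $1+q$ and the computation closes exactly on the polynomial in the statement. (Incidentally, when checking against the paper, note that its \emph{displayed} intermediate value of $\rP(\bM_{\FF_1}^+)$ contains typos in four coefficients; the value above is the one consistent with both the wall-crossing formula and the final answer.) Two smaller cautions about your proposed sanity checks: the top degree is $2\dim\bM_{\FF_1}((4,2),2)=2\bigl((4h-2e)^2+1\bigr)=26$, not ``$2\cdot 17-8$'' (the number $17$ is $\dim\bM_{\PP^2}(4,2)$, irrelevant here); and a Poincar\'e-duality palindromy test is unavailable, since $\bM_{\FF_1}((4,2),2)$ is singular and the target polynomial is visibly non-palindromic ($6t^4$ versus $8t^{22}$).
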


\begin{proof}
From Proposition~\ref{wallcrossingofpairs}, 
\begin{align*}
\rP(\bM_{\FF_1}^+((4,2),2))&=\rP(\bM_{\FF^1}^\infty((4,2),2))+(\rP(\PP^2)-\rP(\PP^3))E(\PP^8)+(\rP(\PP^1)-\rP(\PP^3))\rP(\PP^8)\rP(\PP^1)\\
&+\rP(\FF_1)\rP(\PP^7)\rP(\PP^1)(\rP(\PP^1)-\rP(\PP^2))+\rP(\PP^6)\rP(\PP^2)(\rP(\PP^2)-\rP(\PP^3)),
\end{align*}
and from Proposition~\ref{inftystable} $\rP(\mathrm{Hilb}^{3}(\FF_1))=t^{12}+3t^{10}+9t^{8}+14t^6+9t^4+3t^2+1$ (\cite{GS93}) and $\rP(\bM_{\FF_1}^\infty((4,2),2))=\rP(\mathrm{Hilb}^{3}(\FF_1))\cdot \rP(\PP^8)$. Thus, 
\[\begin{split}
        \rP(\bM_{\FF_1}^+((4,2),2))&=
        t^{28}+4t^{26}+11t^{24}+18t^{22}+23t^{20}+24t^{18}+24t^{16}\\
        &+24t^{14}+24t^{12}+24t^{10}+23t^8+18t^6+11t^4+4t^2+1.
\end{split}\]
On the other hand, from Propositions~\ref{prop:fiberofphi} and~\ref{proepoly}, 
\[\begin{split}
        \rP(\bM_{\FF_1}^+((4,2),2))=& \rP(\PP^1)\rP(\bM_{\FF_1}^s((4,2),2))+ (\rP(\PP^3)-1)(\rP(\PP^4\times \PP^4)-\rP(\PP^4))+
        (\rP(\mathrm{Sym}^2\PP^4)\textendash \rP(\PP^4))\\
        &+\rP(\PP^3)\cdot \rP(\PP^4)
\end{split}\]
and thus we obtain $\rP(\bM_{\FF_1}^s((4,2),2))$. Finally, $\rP(\bM_{\FF_1}((4,2),2))=\rP(\bM_{\FF_1}^s((4,2),2))+\rP(\mathrm{Sym}^2\PP^4)$.
\end{proof}

\begin{corollary}\label{cor3}
 The virtual intersection Poincar\'e polynomial of $\bM_{\mathbb{F}_1}((4,2), 2)$ is 
 \[\begin{split}
\rIP(\bM_{\mathbb{F}_1}((4,2), 2))&= 1+3t^2+8t^4+10t^6+11t^8+11t^{10}+11t^{12}\\
&+11t^{14}+11t^{16}+11t^{18}+10t^{20}+8t^{22}+3t^{24}+t^{26}.
\end{split}\]
\end{corollary}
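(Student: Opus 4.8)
The plan is to obtain $\rIP(\bM_{\FF_1}((4,2),2))$ from the virtual Poincar\'e polynomial $\rP(\bM_{\FF_1}((4,2),2))$ of Corollary~\ref{cor2} by correcting for the singularities via Corollary~\ref{comparisoneandie}. Write $Y=\bM_{\FF_1}((4,2),2)$. As recorded just before Proposition~\ref{prop:fiberofphi}, the strictly semi-stable, hence singular, locus of $Y$ is $\Sym^2\PP^4$ (with $\PP^4=|\cO_{\FF_1}(2h-e)|$) parametrizing the poly-stable sheaves $\cO_{C_1}\oplus\cO_{C_2}$, and the diagonal $\Delta\cong\PP^4$ is the deepest stratum. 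I would apply Corollary~\ref{comparisoneandie} to $Y$ for the stratification $\cV=\{\Delta,\ \Sym^2\PP^4\setminus\Delta,\ Y^\circ\}$, obtaining
\[
\rP(Y)=\rIP(Y)+\widetilde{\rIP}(\Sym^2\PP^4)(1-\rIP(c^\circ L_{\Sym^2\PP^4\setminus\Delta,\,Y}))+\rP(\PP^4)(1-\rIP(c^\circ L_{\Delta,\,Y})),
\]
where, by Example~\ref{exam1} and Proposition~\ref{rationalsmooth}, $\widetilde{\rIP}(\Sym^2\PP^4)=\rIP(\Sym^2\PP^4)-\rP(\PP^4)$ and $\rIP(\Sym^2\PP^4)=\rP(\Sym^2\PP^4)$.

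The decisive observation is that the two open-cone contributions are literally those of $\bM_4$. Indeed $\dim Y=13=4\cdot4-3=\dim\bM_4$, and the singular locus of $\bM_4$ is again $\Sym^2\PP^4$ with diagonal $\Delta\cong\PP^4$. By the surface analogue of Remark~\ref{luna} (recalled at the start of this section: $\Ext^2_S(F,F)=0$ by Serre duality and semistability, so the Quot scheme is smooth and Luna's slice theorem applies), the analytic neighbourhood of $\mathrm{Sing}(Y)$ coincides with that of the moduli of vector bundles, and hence with that of $\mathrm{Sing}(\bM_4)$. Therefore the links match,
\[
\rIP(c^\circ L_{\Sym^2\PP^4\setminus\Delta,\,Y})=\rIP(c^\circ L_{S_4\setminus\Delta,\,\bM_4}),\qquad\rIP(c^\circ L_{\Delta,\,Y})=\rIP(c^\circ L_{\Delta,\,\bM_4}),
\]
and since the strata closures $\Sym^2\PP^4,\PP^4$ and their links agree for $Y$ and $\bM_4$, the entire correction sum of Corollary~\ref{comparisoneandie} is the same for both. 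Subtracting the two instances yields the clean identity $\rP(Y)-\rIP(Y)=\rP(\bM_4)-\rIP(\bM_4)$.

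I would then solve $\rIP(Y)=\rP(Y)+\rIP(\bM_4)-\rP(\bM_4)$, inserting $\rP(Y)$ from Corollary~\ref{cor2}, $\rIP(\bM_4)$ from Theorem~\ref{mainthm1} at $n=4$, and $\rP(\bM_4)$ from the (corrected) Proposition~7.2 of \cite{CM17}. This matches the stated route through Corollary~\ref{univrelation}: that corollary is exactly the instance of Corollary~\ref{comparisoneandie} for $\bM_4$, so feeding its link data into the displayed formula for $Y$ is the same computation. Concretely one also checks the generic contribution $\rIP(c^\circ L_{S_4\setminus\Delta,\,\bM_4})=1+t^2+t^4$ as the intersection cohomology of the open cone over the Segre variety $\PP^2\times\PP^2\subset\PP^8$, by the cone truncation of the Gysin sequence of the circle bundle over $\PP^2\times\PP^2$ exactly as in the proof of Proposition~\ref{coneofquadric}.

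The main obstacle is the deepest stratum $\Delta$. Its transverse model is not a cone over a homogeneous variety but the GIT quotient $H\otimes\mathfrak{sl}_2\git\SL_2$ analysed in Section~\ref{sec:mainresults}, whose open-cone intersection cohomology is not elementary; this is precisely the content encoded in Corollary~\ref{univrelation}, so the substance of the proof is to certify that $Y$ and $\bM_4$ supply identical data to it. A secondary but necessary point is that every space involved is of pure and balanced Hodge type, so that by Remark~\ref{useful}(\ref{pureandbalanced}) the virtual intersection Poincar\'e polynomial equals the genuine one and the E-polynomial identities of Corollary~\ref{comparisoneandie} specialize correctly under $t^2=uv$; purity of the moduli and symmetric products is argued as in Proposition~\ref{mainprop2} and \cite{MOG09}. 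As a final consistency check I would evaluate $\rIP(Y)$ at $t=1$ and match it against the intersection Euler number $110$ recorded for $\FF_1$ in Corollary~\ref{cormain}.
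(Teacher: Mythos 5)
Your proposal is correct and follows essentially the same route as the paper: apply Corollary~\ref{comparisoneandie} to $\bM_{\FF_1}((4,2),2)$, use the Luna-slice argument (Remark~\ref{luna} and the opening paragraph of Section~\ref{ihdoflocalsurface}) to identify the links of its strictly semi-stable strata $\Delta\subset \Sym^2\PP^4$ with those of $\bM_4$, and then feed in the cone data packaged in Corollary~\ref{univrelation} together with $\rP$ from Corollary~\ref{cor2} and $\rIP(\bM_4)$ from Theorem~\ref{mainthm1}. Your reformulation $\rIP(Y)=\rP(Y)+\rIP(\bM_4)-\rP(\bM_4)$ is just an explicit way of saying that the two spaces supply identical correction terms, which is exactly what the paper's proof invokes.
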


\begin{proof}
From Corollaries~\ref{comparisoneandie} and~\ref{cor2}, it is sufficient to calculate intersection cohomology for the open cones of singular locus in $\bM_{\mathbb{F}_1}((4,2), 2)$. Since the analytic neighborhoods of the strictly semi-stable loci of $\bM_4$ and $\bM_{\mathbb{F}_1}((4,2), 2)$ are isomorphic to each other (cf. Remark~\ref{luna} and Section~\ref{ihdoflocalsurface}, paragraph 1), the result follows from the result of Corollary~\ref{univrelation}.
\end{proof}
\begin{remark}
We use the intersection cohomology of the open cones of singular loci of the moduli spaces for the case $\mathbb{F}_1$ since we do not know any explicit birational relation among the relevant moduli spaces, unlike the cases $\mathbb{F}_0$ and $\PP^2$.
\end{remark}

\begin{remark}\label{imrem3}
The virtual intersection Poincar\'e polynomial of $\bM_{\mathbb{F}_1}((4,2), 2)$ in Proposition~\ref{cor3} is exactly the same as that of $\bM_{\mathbb{F}_1}((4,2),1)$ (\cite[Proposition 4.9]{CGKT18}).
\end{remark}
From Remarks~\ref{imrem1}, \ref{imrem2}, and~\ref{imrem3}, 
\begin{conjecture}
 The (virtual) intersection Poincar\'e polynomial of the space $\bM_S(c, \chi)$ depends only on the first Chern class $c$.
\end{conjecture}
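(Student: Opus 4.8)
The plan is to recover $\rIP(\bM_{\FF_1}((4,2),2))$ from its ordinary virtual Poincar\'e polynomial, already computed in Corollary~\ref{cor2}, by subtracting the correction concentrated on the singular locus through the comparison formula of Corollary~\ref{comparisoneandie}. Write $Y=\bM_{\FF_1}((4,2),2)$, so $\dim Y=13$. By Proposition~\ref{prop:fiberofphi} the strictly semi-stable (hence singular) locus of $Y$ is $\mathrm{Sym}^2\PP^4$, parametrizing the polystable sheaves $\cO_{C_1}\oplus\cO_{C_2}$ with $c_1(\cO_{C_i})=2h-e$, together with a deeper stratum along the diagonal $\Delta\cong\PP^4$ where $C_1=C_2$. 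I would run Corollary~\ref{comparisoneandie} on the stratification $\cV=\{\Delta,\ \mathrm{Sym}^2\PP^4\setminus\Delta,\ Y\setminus\mathrm{Sym}^2\PP^4\}$, which yields
\[
\rP(Y)=\rIP(Y)+\widetilde{\rIP}(\mathrm{Sym}^2\PP^4)\bigl(1-\rIP(c^\circ L_{\mathrm{Sym}^2\PP^4\setminus\Delta,\,Y})\bigr)+\rP(\PP^4)\bigl(1-\rIP(c^\circ L_{\Delta,\,Y})\bigr),
\]
where $\widetilde{\rIP}(\Delta)=\rP(\PP^4)$ because $\Delta$ is smooth, and $\widetilde{\rIP}(\mathrm{Sym}^2\PP^4)=\rP(\mathrm{Sym}^2\PP^4)-\rP(\PP^4)$ using $\rIP(c^\circ L_{\Delta,\mathrm{Sym}^2\PP^4})=1$ from Example~\ref{exam1} and Proposition~\ref{rationalsmooth}.

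The key point is that every term of this correction depends only on the stratified analytic structure of $Y$ along $\mathrm{Sym}^2\PP^4$, and by Remark~\ref{luna} together with the first paragraph of Section~\ref{ihdoflocalsurface} that structure is isomorphic to the one along the singular locus $\mathrm{Sym}^2\PP^4=S_4$ of $\bM_4$. Indeed, at a point $\cO_{C_1}\oplus\cO_{C_2}$ with $C_1\neq C_2$ the local model is $\Ext^1(F,F)\git\mathrm{Aut}(F)$, and the effective $\CC^*$ acts with opposite weights on the cross terms $\Ext^1(\cO_{C_1},\cO_{C_2})\oplus\Ext^1(\cO_{C_2},\cO_{C_1})=\CC^3\oplus\CC^3$, producing precisely the affine cone over the Segre variety $\PP^2\times\PP^2\subset\PP^8$; the self-extension directions account for the $8$ tangent directions along $\mathrm{Sym}^2\PP^4$, and a dimension count ($13-8=5=\dim C(\PP^2\times\PP^2)$) matches $\bM_4$ exactly. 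Hence both open-cone polynomials $\rIP(c^\circ L_{\mathrm{Sym}^2\PP^4\setminus\Delta,\,Y})$ and $\rIP(c^\circ L_{\Delta,\,Y})$ coincide with their counterparts for $\bM_4$, which are governed by Corollary~\ref{univrelation} at $n=4$.

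To make this effective I would pin the two cones down separately: computing $\rIP(c^\circ L_{\mathrm{Sym}^2\PP^4\setminus\Delta,\,Y})$ directly as the intersection cohomology of the open cone over $\PP^2\times\PP^2\subset\PP^8$, generalizing Proposition~\ref{coneofquadric} via the cone truncation formula, gives $1+t^2+t^4$, after which the single relation of Corollary~\ref{univrelation} fixes $\rIP(c^\circ L_{\Delta,\,Y})$. Substituting these cones, $\rP(Y)$ from Corollary~\ref{cor2}, and $\rIP(\bM_4)$ from Theorem~\ref{mainthm1} into the displayed formula produces $\rIP(Y)$; equivalently, since the entire correction sum is identical for $\bM_4$ and $Y$, it equals $\rP(\bM_4)-\rIP(\bM_4)$, so conceptually $\rIP(Y)=\rP(Y)-\bigl(\rP(\bM_4)-\rIP(\bM_4)\bigr)$. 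A final verification that all spaces and cones occurring are of pure and balanced Hodge type (Remark~\ref{useful}) guarantees that the virtual intersection Poincar\'e polynomial equals the genuine one, so the resulting coefficients are the claimed Betti numbers.

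The step I expect to be the main obstacle is the honest justification that the correction transfers from $\bM_4$ to $Y$: one must check not merely that the transverse singularity types agree pointwise, but that the whole stratified neighborhood matches, so that the link of the deep stratum $\Delta$ and the inductively defined $\widetilde{\rIP}(\overline{V})$ are literally the same, forcing the full sum $\sum_{V}\widetilde{\rIP}(\overline{V})(1-\rIP(c^\circ L_{V,\cdot}))$ to coincide for the two spaces. Closely tied to this is extracting the two cone polynomials individually from the lone equation in Corollary~\ref{univrelation} and reconciling it cleanly with the independent Segre-cone computation; once that bookkeeping is settled, the remainder is routine polynomial arithmetic.
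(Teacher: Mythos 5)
Your proposal has a genuine gap: it does not actually address the statement, which is a \emph{conjecture} asserting that for every del Pezzo surface $S$, every class $c$, and every Euler characteristic $\chi$, the polynomial $\rIP(\bM_S(c,\chi))$ is independent of $\chi$. What you outline is the computation of the single polynomial $\rIP(\bM_{\FF_1}((4,2),2))$ --- i.e.\ the content of the paper's corollary on $\FF_1$ --- by transferring the singularity correction from $\bM_4$ via Corollary~\ref{comparisoneandie}, Remark~\ref{luna} and Corollary~\ref{univrelation}. That computation is sound and essentially coincides with the paper's own proof of that corollary (and your direct evaluation $1+t^2+t^4$ for the open cone over the Segre variety $\PP^2\times\PP^2\subset\PP^8$ is correct and consistent with Proposition~\ref{coneofquadric} and Corollary~\ref{univrelation}); but it establishes nothing about $\chi$-dependence by itself, because nowhere do you compare with any other value of $\chi$. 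Even to obtain the single piece of evidence the paper records in Remark~\ref{imrem3}, you would still need the independent computation of $\rIP(\bM_{\FF_1}((4,2),1))$ from \cite{CGKT18} and the observation that the two polynomials agree.

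More fundamentally, the conjecture quantifies over all triples $(S,c,\chi)$, and neither your argument nor the paper's provides any mechanism forcing $\chi$-independence in general --- for instance an isomorphism, a derived equivalence, or a wall-crossing chain connecting $\bM_S(c,\chi)$ and $\bM_S(c,\chi')$ that is known to preserve intersection cohomology. This is precisely why the statement appears in the paper as a conjecture rather than a theorem: its only support is the three examples in Remarks~\ref{imrem1}, \ref{imrem2}, and~\ref{imrem3} (the surfaces $\FF_0$, $\PP^2$, $\FF_1$, each comparing $\chi=2$ with $\chi=1$), and the moduli spaces for different $\chi$ can genuinely fail to be isomorphic, as the discussion of \cite{Woo13} in the introduction shows. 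A proof would require a new idea beyond the methods of the paper; your proposal, as written, proves at most one of the computations feeding into one of these examples, so it cannot be counted as a proof (or even a complete verification of a single instance) of the conjecture.
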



\bibliographystyle{alpha} 
\newcommand{\etalchar}[1]{$^{#1}$}

\end{document}